\documentclass{article}

\usepackage[utf8]{inputenc} 
\usepackage[T1]{fontenc}    
\usepackage{url}            
\usepackage{booktabs}       
\usepackage{amsfonts}       
\usepackage{nicefrac}       
\usepackage{microtype}      
\usepackage{graphicx}
\usepackage{doi}
\usepackage{orcidlink}
\usepackage{amsthm}
\usepackage[left=1.5cm, right=2cm]{geometry}

\usepackage{bm}
\usepackage{algorithm}
\usepackage{algpseudocode}
\usepackage{color}
\usepackage{systeme}
\usepackage{comment}
\usepackage{scalerel}
\usepackage{graphicx}
\usepackage{grffile}
\usepackage{multirow}
\usepackage{pdflscape}
\usepackage{amsmath,amssymb}
\usepackage{cases}
\usepackage{bm}
\usepackage{booktabs}
\usepackage{tabularx}   

\usepackage[capitalise,noabbrev]{cleveref}
\usepackage{algorithm}
\usepackage{algorithmicx}
\usepackage{algpseudocode}
\usepackage{mathtools}
\usepackage{comment}
\usepackage{tikz}
\newlength{\figureheight}
\newlength{\figurewidth}
\usepackage{tikzsymbols}
\usepackage{pgfplots}
\pgfplotsset{compat=newest} 
\usetikzlibrary{plotmarks}
\usetikzlibrary{arrows.meta}
\usetikzlibrary{positioning}
\usepgfplotslibrary{patchplots}
\usepackage{grffile}
\usepackage{multirow}
\usepackage{pdflscape}
 \usepackage{fontawesome}
\pgfplotsset{every axis/.append style={
                    label style={font=\scriptsize},
                    tick label style={font=\scriptsize},
                    legend style={font=\scriptsize}
                    }}
\pgfplotsset{compat=newest}
\pgfplotsset{plot coordinates/math parser=false}
\pgfplotsset{grid style={dotted,gray}}
\usepgfplotslibrary{groupplots}
\pgfdeclarelayer{background}
\pgfdeclarelayer{foreground}
\pgfsetlayers{background,main,foreground}
\usepackage{subcaption}
\usepackage{textcomp}
\usepackage{xcolor}
\usepackage{dsfont}
\usepackage{markdown}

\usepackage{hyperref}       

\newcommand\norm[1]{\left\lVert#1\right\rVert}

\newtheorem{thrm}{Theorem}[section]
\newtheorem{prpstn}[thrm]{Proposition}
\newtheorem{rmrk}[thrm]{Remark}

\definecolor{darkorange25512714}{RGB}{255,127,14}
\definecolor{forestgreen4416044}{RGB}{44,160,44}
\definecolor{steelblue31119180}{RGB}{31,119,180}
\definecolor{crimson2143940}{RGB}{214,39,40}

\makeatletter
\patchcmd{\ALG@step}{\addtocounter{ALG@line}{1}}{\refstepcounter{ALG@line}}{}{}
\newcommand{\ALG@lineautorefname}{Line}
\makeatother

\usepackage{todonotes}

\begin{document}

\title{Optimal control with the shifted proper orthogonal decomposition via a first-reduce-then-optimize framework}

\author{
    Tobias Breiten\,\orcidlink{0000-0002-9815-4897}\thanks{\texttt{\{tobias.breiten@,burela@tnt.,pschulze@math.\}tu-berlin.de}}\,\,\thanks{Institute of Mathematics, Technische  Universit\"at Berlin, Germany},
    Shubhaditya Burela\,\orcidlink{0009-0003-4442-5297}\footnotemark[1]\,\,\footnotemark[2],
    Philipp Schulze\,\orcidlink{0000-0002-7299-4628}\footnotemark[1]\,\,\footnotemark[2]\,\,\thanks{Institute of Mathematics, University of Potsdam, Germany}
}

\providecommand{\keywords}[1]
{
  \small	
  \textbf{Keywords:} #1\\
}
\providecommand{\ams}[1]
{
  \small	
  \textbf{AMS subject classifications:} #1
}

\maketitle

\begin{abstract}
Solving optimal control problems for transport-dominated partial differential equations (PDEs) can become computationally expensive, especially when dealing with high-dimensional systems. 
To overcome this challenge, we focus on developing and deriving reduced-order models that can replace the full PDE system in solving the optimal control problem.
Specifically, we explore the use of the shifted proper orthogonal decomposition (POD) as a reduced-order model, which is particularly effective for capturing low-dimensional representations of high-fidelity transport-dominated phenomena.
In this work, a reduced-order model is constructed first, followed by the optimization of the reduced system.
We consider a 1D linear advection equation problem and prove existence and uniqueness of solutions for the reduced-order model as well as the existence of an optimal control.
Moreover, we compare the computational performance of the shifted POD method against the standard POD.
\end{abstract}
\keywords{optimal control, model order reduction, shifted proper orthogonal decomposition}
\\\noindent\ams{35L02, 49M41, 49K20, 35Q35}

\section{Introduction}
In this work, we discuss the use of reduced-order methods for an optimal control problem constrained by the linear advection equation 
\begin{equation}\label{eq:lin_adv_eq}
\begin{aligned}
    \partial_t y(t,x) + v \partial_x y(t,x) &= u(t,x) && \text{in } (0,T]\times (0,l), \\
    y(t,l)&= y(t,0) && \text{in } (0,T), \\
    y(0,x) &= y_0(x) && \text{in } (0,l),
\end{aligned}
\end{equation}
where $v\in\mathbb{R}$ denotes a constant velocity and $u$ is a control function to be optimized. Throughout this paper, we take a control theoretic point of view and instead of the partial differential equation (PDE) in \eqref{eq:lin_adv_eq} focus on the abstract control system
\begin{equation}\label{eq_def:PDE_continuous}
\begin{aligned}
    \dot{y}(t) &= \mathcal{A} y(t) + \mathcal{B}u(t),  &  \quad t \in (0,T],\\
    y(0) &= y_0,
\end{aligned}
\end{equation}
where $\mathcal{A}\colon \mathcal{D}(\mathcal{A}) \subset L^2(0,l) \rightarrow L^2(0,l), \mathcal{A}=-v \tfrac{\mathrm{d}}{\mathrm{d} x}$ with $\mathcal{D}(\mathcal{A})=H^1_\mathrm{per}(0,l):= \{f \in H^1(0, l) \ | \ f(0) = f(l)\}$ is a densely defined and closed linear operator generating a strongly continuous (semi-)group, see, e.g., \cite[Examples 2.6.12 \& 2.7.12, Thm.~3.8.6, Cor.~2.1.8, Prop.~2.3.1]{tucsnak_observation_2009}. 
For the control operator $\mathcal{B}\in\mathcal{L}(U,H)$ with $H:=L^2(0,l), U = \mathbb{R}^{m}$, we assume that
\begin{equation}
    \label{eq:control_operator}
    \mathcal{B}u := \sum^{m}_{k=1} b_ku_k,
\end{equation}
with given control shape functions $b_k \in \mathcal{D}(\mathcal{A})$. Associated with \eqref{eq_def:PDE_continuous}, we consider a standard quadratic tracking-type cost functional 
\begin{equation}\label{eq_def:FOM_costFunc_continuous}
    \mathcal{J}(y, u) = \frac{1}{2} \int^{T}_0 \norm{y(t) - y_\mathrm{d}(t)}^2_H \mathrm{d}t + \frac{\mu}{2} \int_0^{T}\norm{u(t)}^2_{U}\,\mathrm{d}t\, ,
\end{equation} 
where $\mu > 0$ and the desired state satisfies $y_{\mathrm{d}} \in L^2(0,T;H)$.

Linear quadratic optimal control problems as in \eqref{eq_def:PDE_continuous}-\eqref{eq_def:FOM_costFunc_continuous} are well understood, and detailed treatises can be found in, e.g., the monographs \cite{hinze_optimization_2009,troltzsch_optimal_2010}. First-order necessary optimality conditions can be derived straightforwardly by means of the formal Lagrange method, leading to the adjoint equation,
\begin{equation}\label{eq_def:PDE_adjoint_continuous}
\begin{aligned}
    -\dot{\lambda}(t) &= \mathcal{A}^* \lambda(t) +y(t) - y_\mathrm{d}(t), \quad t \in [0,T),\\
    \lambda(T) &= 0,
\end{aligned}
\end{equation}
and the optimality condition $\mu u(t) + \mathcal{B}^* \lambda(t) = 0$. For the particular case considered here, we have that $\mathcal{A}^*= -\mathcal{A}=v\frac{\mathrm{d}}{\mathrm{d} x}$ with $\mathcal{D}(\mathcal{A}^*) = H^1_\mathrm{per}(0,l)$, see \cite[Section 2.8]{tucsnak_observation_2009}. Altogether, the first-order necessary optimality conditions read
\begin{subnumcases}{\mathrm{OC}_{\scaleto{\mathrm{FOM}}{4pt}} := }
    \dot{y}(t) = \mathcal{A} y(t) + \mathcal{B} u(t),\quad t \in (0,T]\label{eq_def:OC_FOM_1},  \\
    y(0) = y_0\label{eq_def:OC_FOM_2}, \\[1ex]
    -\dot{\lambda}(t) = \mathcal{A}^* \lambda(t) + y(t) - y_\mathrm{d}(t),\quad t \in [0,T)\label{eq_def:OC_FOM_3},  \\
    \lambda(T) = 0\label{eq_def:OC_FOM_4},\\[1ex]
    \mu u(t) + \mathcal{B}^* \lambda(t)  = 0,\quad t \in [0,T]. \label{eq_def:OC_FOM_5}
\end{subnumcases} 
The optimal control problem consisting of \eqref{eq_def:PDE_continuous} and \eqref{eq_def:FOM_costFunc_continuous} is rather standard and there exists a considerable amount of work dealing with more general control problems for hyperbolic PDEs. Let us for instance refer to \cite{bressan_optimality_2007} and \cite{ulbrich_adjoint-based_2003} where  nonlinear hyperbolic systems are studied. In \cite{borzi_optimal_2003}, the authors discuss a bilinear optimal control problem for optical flow applications. Let us also point to \cite{Lasiecka_Triggiani_2000} where boundary control problems for hyperbolic systems are considered from a classical control-theoretic viewpoint. More recent research, such as \cite{chen_optimal_2021}, investigates the optimal control of hyperbolic PDEs within the framework of optimal transport in Wasserstein spaces. For our rather simple case involving a 1D linear advection equation with periodic boundary conditions and distributed $L^2$ control, results such as well-posedness of the state equation, existence of an optimal control or necessary optimality conditions follow straightforwardly with the techniques discussed in \cite{troltzsch_optimal_2010}. 

PDE-constrained optimal control problems are often difficult to solve numerically, as they lead to large-scale optimization challenges, particularly in higher spatial dimensions and/or in cases where a fine discretization is required. For this reason, one is interested in mathematical methods that reduce the complexity of the underlying dynamical system and speed up its simulation by using reduced-order models (ROMs). Over the past three decades, significant progress has been made in developing efficient model order reduction (MOR) methods for linear quadratic (LQ) optimal control problems, with a particular focus on projection-based methods, see, e.g., \cite{benner_model_2017, quarteroni_reduced_2016}. These methods rely on projecting the dynamical system onto subspaces consisting of basis elements that contain features of the expected solution. The reduced basis (RB) \cite{benner_model_2017} method is one popular approach, constructing a low-dimensional reduced basis space spanned by snapshots of the original solution. The reduced basis approximation is then efficiently obtained by Galerkin projection onto this space. Early research on RB methods for optimal control, such as \cite{ito_reduced_2001, quarteroni_reduced_nodate}, focused on flow problems, while later studies \cite{negri_reduced_2013, karcher_certified_2018} extended RB methods to the optimal control of parametrized problems. Another MOR technique, balanced truncation (BT), transforms the state-space system into a balanced form, where the controllability and observability Gramians are diagonal and equal, allowing states that are hard to observe and hard to reach to be truncated. Applications of balanced truncation to PDE control problems have been explored in works such as \cite{benner_balancing-related_2013, de_los_reyes_balanced_2011}.

Probably the most widely used MOR method for linear or nonlinear optimal control problems is proper orthogonal decomposition (POD). Early applications of POD in optimal control can be traced back to \cite{ravindran_reduced-order_2000, kunisch_control_1999}, where its use in flow control was explored. POD has also been effectively employed to compute reduced-order controllers \cite{ravindran_adaptive_2002} with nonlinear observers \cite{atwell_reduced_nodate} and to design feedback as well as model predictive controllers \cite{alla_asymptotic_2015, ghiglieri_optimal_2014, kunisch_hjb-pod-based_2004}. Additionally, there has been considerable research on error analysis for POD-based ROMs in optimal control. This includes analysis for nonlinear dynamical systems \cite{hinze_proper_2005}, abstract LQ systems \cite{hinze_error_2008}, parabolic problems \cite{gubisch_pod_2014, studinger_numerical_2012}, and elliptic problems \cite{kahlbacher_pod_2012}. A comparison of a posteriori error estimators for RB and POD methods in LQ optimal control problems was conducted in \cite{tonn_comparison_2011}. Furthermore, numerous successful applications of POD-based optimization have been demonstrated in fields such as fluid dynamics and aerodynamic shape optimization \cite{LY2001223, choi_gradient-based_2020}, chemistry \cite{amsallem_design_2015}, microfluidics \cite{antil_reduced_2012}, and finance \cite{sachs_priori_2013}. One challenge with using the POD approach in optimal control problems is that the basis must be precomputed based on a reference control, which may differ significantly from the final optimal control. As a result, the suboptimal control derived from the POD model may not provide a good approximation of the full-order optimal control. To address this, several adaptive strategies have been developed \cite{Afanasiev_wake_2001, ravindran_adaptive_2002, alla_adaptive_2013, grasle_pod_2017} that update the POD basis during the optimization process. Some of these methods have established strong mathematical foundations over the years, such as the optimality system POD (OSPOD) \cite{kunisch_proper_2008} and trust region POD (TRPOD) \cite{arian_trust-region_nodate}. OSPOD addresses the issue of unmodeled dynamics by updating the POD basis in the direction that minimizes the cost functional, ensuring that the POD-reduced system is computed from the trajectory associated with the optimal control. Convergence results and a-posteriori error estimates for OSPOD have been studied in \cite{volkwein_optimality_nodate, kunisch_uniform_2015}. TRPOD, on the other hand, manages model approximation quality within a trust-region framework by comparing the predicted reduction with the actual reduction in the model function value. 
The combination of trust region methods and model order reduction has also seen notable applications in recent years \cite{bergmann_optimal_2008, yue_accelerating_2013}.

While the use of ROMs for LQ control problems is nowadays rather standard for parabolic problems, hyperbolic problems still pose significant challenges, as they exhibit a slower decay of the Kolmogorov $n$-width \cite{greif_decay_2019,Peh22,HesPU26}, rendering many conventional MOR methods ineffective. This issue is especially prevalent when dealing with transport-dominated fluid systems, such as propagating flame fronts or traveling acoustic and shock waves \cite{krah_front_2022}. 
The main goal of this work is to construct a ROM for \eqref{eq_def:PDE_continuous} using the shifted proper orthogonal decomposition (sPOD) \cite{reiss_shifted_2018,KraMZRS25} and to determine an optimal control based on the resulting ROM.
The sPOD alleviates the issue of the slow decay of the Kolmogorov $n$-width by utilizing a nonlinear approximation ansatz to capture the high-dimensional space, followed by constructing the reduced-order approximation via a nonlinear Galerkin projection \cite{black_efficient_2021, black_projection-based_2020}. Other MOR techniques for transport-dominated systems include \cite{rim_transport_2018}, which uses transport reversal and template fitting; \cite{nonino_overcoming_2019}, which employs transport maps; and \cite{krah_front_2022}, which approximates the field variable using a front shape function and a level set function for efficient model reduction. For a detailed overview of MOR methods for transport-dominated problems, see \cite{HesPU26}.

Solving the original optimal control problem consisting of \eqref{eq_def:PDE_continuous} and \eqref{eq_def:FOM_costFunc_continuous} by approximating the FOM using sPOD comes with some challenges, especially since the nonlinear projection framework generically yields nonlinear reduced-order systems. With this context in mind, the following are the contributions of this paper:
\begin{itemize} 
    \item Under a smallness condition on the control $u$, in Theorem \ref{thrm:unique_solution} we prove existence and uniqueness of solutions to the \textit{sPOD-Galerkin} (sPOD-G) reduced-order model associated with \eqref{eq_def:PDE_continuous}.
    \item For a sufficiently large regularization parameter $\mu$, Theorem \ref{thrm:control_existence} shows the existence of an optimal control for the underlying reduced-order optimal control problem.
    \item Theorem \ref{thm:first_order_cond} provides necessary optimality conditions for the sPOD-G method applied to the linear advection equation.
    \item Additionally, in Proposition \ref{prop:max_sv_invariance}, we propose and later numerically prove that the dimension of $y(t)$ when shifted in a stationary frame of reference is bounded from above for a specific choice of control shape functions $b_k$.  
    \item We examine the 1D linear advection test case with two different variations, a single tilt problem and a double tilt problem. We then compare the results in terms of reduced-order dimension, convergence behavior, and computational time for both the sPOD-G and POD-G methods. 
\end{itemize}
The structure of the paper is as follows. In \Cref{sec:MOR_methods}, we recall the basic ideas of POD and sPOD. 
We briefly review the essential optimality conditions for the POD-G method in \Cref{ssec:POD-G} and then, in \Cref{ssec:sPOD-G}, we thoroughly derive the sPOD-G method specifically for the 1D linear advection equation. In \Cref{sec:results}, we present algorithmic details as well as the numerical results for the 1D linear advection equation, offering a detailed comparison and analysis of timings. Lastly, we conclude with a summary of our findings in \Cref{sec:conclusion}.

\subsection*{Notation}\label{ssec:notations}

We use the notation $\mathbb{R}^{m\times n}$ for the space of $m\times n$ matrices with real entries and the transpose of a matrix $A$ is denoted with $A^\top$.
Moreover, we often write vectors and matrices in terms of their components, e.g., $v=[v_i]_{i=1}^n\in\mathbb{R}^n$, $A=[a_{ij}]_{i,j=1}^n\in\mathbb{R}^{n\times n}$, or $A=[a_{ij}]_{i,j=1}^{m,n}\in\mathbb{R}^{m\times n}$.
The expressions $\norm{v}$ and $\norm{A}$ denote the Euclidean norm of the vector $v$ and the spectral norm of the matrix $A$, respectively.
For the partial derivative of a function $f$ w.r.t.~the variable $x_i$, we write $\partial_{x_i}f$.
Throughout the paper, we denote the Bochner spaces of $L^p$ functions with $p\ge 1$ in the interval $[a, b]$ with values in a Hilbert space $X$ by $L^p(a, b;X)$ and we set $L^p(a, b)\vcentcolon= L^p(a, b;\mathbb{R})$. 
Similarly, for $k\ge 1$ we use the notation $H^k(a, b;X)$ and $H^k(a, b)$ for the Bochner spaces with weak derivatives up to order $k$ in $L^2$.
Moreover, we use $H^1_{\mathrm{per}}(a,b)$ for the Sobolev space of weakly differentiable functions with periodic boundary conditions.
The spaces of continuous and continuously differentiable functions from an interval $I\subseteq \mathbb{R}$ to a Hilbert space $X$ are denoted with $C(I,X)$ and $C^1(I,X)$, respectively, and we set $C(I)\vcentcolon= C(I,X)$ and $C^1(I)\vcentcolon= C^1(I,X)$.
The space of bounded linear operators between two Hilbert spaces $X$ and $Y$ is denoted by $\mathcal{L}(X, Y)$ and we set $\mathcal{L}(X) \vcentcolon=\mathcal{L}(X, X)$.
We also often drop the explicit time dependency from some terms for simplicity throughout the paper.

\section{Model order reduction methods}\label{sec:MOR_methods}

In this section, we recall the most important steps for constructing ROMs by POD and sPOD. For POD, the results are all well-known and can be found in, e.g., \cite{gubisch_pod_2014}. For the presentation of Galerkin reduced-order models associated with sPOD, we closely follow the exposition in \cite{black_projection-based_2020}.

\subsection{POD-Galerkin method}

Given trajectories $y \in C([0, T]; \mathcal{D}(\mathcal{A}))$ obtained by solving \eqref{eq_def:PDE_continuous},
consider snapshots $y_j = y(t_j) \in \mathcal{D}(\mathcal{A})$ for $j=1,\dots,n_t$ and define a snapshot set by 
\begin{equation*}
    \mathcal{Y} := \text{span} \Bigl\{\, y(t_j) \ | \ t_j \in [0, T]   \text{ for } \: 1 \leq j \leq n_t \Bigr\} \subset \mathcal{D}(\mathcal{A})\;.
\end{equation*}
Our aim is to identify a suitable $\ell$-dimensional subspace $\mathcal{Y}_{\ell} \subset \mathcal{Y}$ described by the basis $\{\phi_1, \dots , \phi_{\ell}\}$ that minimizes the approximation error in 
\begin{equation}\label{eq_def:Galerkin_POD_ansatz}
    y(t) \approx \sum^{\ell}_{i=1} \alpha_i(t)\phi_i,
\end{equation} 
via the optimization problem
\begin{subnumcases}{} 
    \mathrm{min} \: \frac{1}{2} \int^{T}_0 \norm{y(t) - \sum^{\ell}_{i=1} \Bigl\langle y(t), \phi_i \Bigr \rangle_H \: \phi_i}^2_H \mathrm{d}t ,\\[1em]
    \text{s.t.} \: \{\phi_i\}^{\ell}_{i=1} \subset \mathcal{D}(\mathcal{A}) \: \text{and} \:\langle \phi_i, \phi_j \rangle_H = \delta_{ij}, \: i, j = 1, \ldots , \ell \;.
\end{subnumcases}
The spatial basis functions $\{\phi_i\}^{\ell}_{i=1}$ are often referred to as POD basis or POD modes, and $\alpha_i(t) = \langle y(t), \phi_i\rangle_H $ are the reduced states.
For a detailed analysis and solution procedure of this optimization problem, see, e.g., \cite{GubV17,quarteroni_reduced_2016}.

After computing the POD basis, we are interested in deriving a low-dimensional approximation of \eqref{eq_def:PDE_continuous}. 
For this, we use the Galerkin ansatz \eqref{eq_def:Galerkin_POD_ansatz} in \eqref{eq_def:PDE_continuous} and subsequently use the Galerkin orthogonality condition to obtain
\begin{equation*}
\begin{aligned}
   &\Bigl \langle \phi_j, \sum^{\ell}_{i=1} \dot{\alpha}_i(t) \phi_i \Bigr \rangle_H - \Bigl \langle \phi_j, \sum^{\ell}_{i=1}\alpha_i(t)\mathcal{A}\phi_i \Bigr \rangle_H  - \Bigl\langle \phi_j, \mathcal{B}u(t) \Bigr \rangle_H  = 0 \\
    &\Bigl \langle \phi_j, \sum^{\ell}_{i=1}\alpha(0)\phi_i \Bigr \rangle_H  = \Bigl\langle \phi_j, y_0 \Bigr\rangle_H
\end{aligned}
\end{equation*}
for $j=1,\ldots,\ell$.
As a consequence, the low-dimensional reduced-order model is given as
\begin{equation}\label{eq_def:POD_Galerkin_continuous}
\begin{aligned}
   & \dot{\alpha}(t) = A_{\ell} \alpha(t) + B_{\ell} u(t)\\
    &\alpha(0)  = \alpha_0
\end{aligned}
\end{equation}
where
\begin{equation*} 
\begin{aligned}
     A_\ell := \begin{bmatrix} \langle \phi_1, \mathcal{A}\phi_1\rangle_H  & \ldots  & \langle \phi_1, \mathcal{A}\phi_{\ell}\rangle_H \\ \vdots \\
     \langle \phi_{\ell}, \mathcal{A}\phi_1\rangle_H & \ldots  & \langle \phi_{\ell}, \mathcal{A}\phi_{\ell}\rangle_H \end{bmatrix} \in \mathbb{R}^{\ell \times \ell}, \quad
      B_{\ell} := \begin{bmatrix} \langle \phi_1, \mathcal{B}e_1\rangle_H  & \ldots  & \langle \phi_1, \mathcal{B}e_m\rangle_H \\ \vdots \\
     \langle \phi_{\ell}, \mathcal{B}e_1\rangle_H & \ldots  & \langle \phi_{\ell}, \mathcal{B}e_m\rangle_H \end{bmatrix}& \in \mathbb{R}^{\ell \times m},
      \quad \alpha_0 = [\langle \phi_j, y_0\rangle_H ]^{\ell}_{j=1} \in \mathbb{R}^{\ell}.
     \end{aligned}
\end{equation*}

\subsection{sPOD-Galerkin method}\label{ssec:sPODG}
The discussion about to follow is inspired by the exposition described in \cite{black_projection-based_2020}. In contrast to standard POD, the sPOD method decomposes $y(t)$ using a nonlinear decomposition ansatz
\begin{equation}\label{eq_def:sPOD_continuous}
    y(t) \approx \sum^{r}_{i=1} \alpha_i(t) \mathcal{T}_i(z_i(t))\phi_i, 
\end{equation}
where $z_i(t) \in Z_i, i=1,\dots,r$ are time-dependent shifts and  $\mathcal{T}_i\colon Z_i \rightarrow \mathcal{L}(H), i=1, \ldots, r$, are appropriately chosen transformation operators. Throughout this manuscript, we will consider the case $Z_i= \mathbb{R}.$ For the transformation operators, we assume that the space $H$ is $\mathcal{T}_i$-invariant in the sense that $\mathcal{T}_i(z) H \subseteq H$ for all $z \in Z_i$ and $i=1,\ldots,r$. While a general choice of the transformation operators is a challenge on its own, see \cite{burela_parametric_2023}, for specific scenarios $\mathcal{T}_i$ can be designed by analyzing the underlying dynamics of the problem at hand. For example, in the case of \eqref{eq:lin_adv_eq}, it makes sense to consider a shift semigroup as we will discuss in more detail in Proposition \ref{prop:max_sv_invariance}. 
Given $\mathcal{T}_i$, we aim to minimize the approximation error in \eqref{eq_def:sPOD_continuous} and consider the following minimization problem
\begin{subnumcases}{} \label{eq_def:sPOD_optimization}
    \mathrm{min} \: \frac{1}{2} \int^{T}_0 \norm{y(t) - \sum^r_{i=1} \alpha_i(t) \mathcal{T}_i(z_i(t))\phi_i }^2_H \mathrm{d}t ,\\[1em]
    \text{s.t.} \: \{\phi_i\}^{r}_{i=1} \subset \mathcal{D}(\mathcal{A}) \: \text{and} \: \norm{\phi_i}_H = 1, \alpha_i \in L^2(0, T), z_i \in L^2(0, T; Z_i) \;\text{ for }i=1,\ldots,r.
\end{subnumcases}
A point to note is that unlike the POD minimization problem where the modes are required to form an orthonormal set, in sPOD, the shifted modes $\{\mathcal{T}_i(z_i(t)) \phi_i\}^r_{i=1}$ are chosen to be just normalized and not necessarily orthonormal, see \cite[Example 4.4]{black_projection-based_2020}.

As noted in \cite[Remark 1.1]{black_projection-based_2020}, the sPOD ansatz can be simplified by considering the same transformation operator with the same shifts for different modes $\phi_i$, i.e.,
\begin{equation}
    y(t) \approx \sum^K_{k=1}\sum^{\ell^k}_{i=1} \alpha_{k, i}(t) \mathcal{T}_k(z_k(t))\phi_{k, i}.
\end{equation}

In particular, if we consider only a single time-dependent shift $z(t)\in \mathbb R$ and a corresponding 
transformation operator $\mathcal{T}\colon \mathbb R\to \mathcal{L}(H)$, this expression further simplifies according to 
\begin{equation}\label{eq_def:sPOD_advection}
    y(t) = \sum^{\tilde{\ell}}_{i=1} \alpha_{i}(t) \mathcal{T}(z(t)) \phi_{i}\;.
\end{equation}
In the context of the linear advection equation considered here, this choice is motivated by the dynamics being characterized by a single traveling wave with constant velocity $v$. 
Subsequently, the minimization problem \eqref{eq_def:sPOD_optimization} is simplified to
\begin{subnumcases}{} \label{eq_def:sPOD_advection_optimization}
    \mathrm{min} \: \frac{1}{2} \int^{T}_0 \norm{y(t) - \mathcal{T}(z(t))\sum^{\tilde{\ell}}_{i=1} \alpha_i(t) \phi_i }^2_H \mathrm{d}t ,\\[1em]
    \text{s.t.} \: \{\phi_i\}^{\tilde{\ell}}_{i=1} \subset \mathcal{D}(\mathcal{A}) \: \text{and} \: \norm{\phi_i}_H = 1, \langle \phi_i, \phi_j \rangle_H = \delta_{ij}, \alpha_i \in L^2(0, T) \;\text{ for }i,j=1,\ldots,\tilde\ell.
\end{subnumcases}
Under the assumption that $\mathcal{T}$ is isometric and the shift function $z\in L^2(0,T)$ is given, problem \eqref{eq_def:sPOD_advection_optimization} is solvable and its solution can be obtained equivalently via a POD minimization of the transformed data $\mathcal{T}^*(z(t))y(t), t\in [0,T],$ see \cite[Theorem 4.8]{black_projection-based_2020}. With regard to the assumptions on $\mathcal{T}$ and in view of the dynamics in \eqref{eq:lin_adv_eq}, from now on we focus on the periodic shift operator $\mathcal{T}\colon \mathbb{R}\to\mathcal{L}(L^2(0,l))$ defined via
\begin{equation}
    \label{eq:periodic_shift_operator}
    \mathcal{T}(z)\phi(x)=
    \begin{cases}
        \phi(x-\eta) & \text{for } \eta \le x \le l, \\
        \phi(x-\eta+l) & \text{for } 0 \le x < \eta 
    \end{cases}
\end{equation}
with $\eta\vcentcolon=z\mod l$, see e.g.~\cite[Def.~1.2.2]{schulze_energy-based}.
Especially, this choice ensures that $\mathcal{T}$ is isometric and that $H=H^1_\mathrm{per}(0,l)$ is a $\mathcal{T}(z)$-invariant subspace for any $z\in\mathbb{R}$.

\begin{rmrk}\label{rem:diff_T_vs_phi}
Note that if $\phi$ is sufficiently regular, we have that $\mathcal{T}'(z)\phi:=\tfrac{\mathrm{d}}{\mathrm{d}z}(\mathcal{T}(z)\phi)=-\mathcal{T}(z)\phi'$ and $\mathcal{T}''(z)\phi=\mathcal{T}(z)\phi''$. In particular, it holds that $\mathcal{T}^*(z)=\mathcal{T}(-z)=\mathcal{T}^{-1}(z).$
\end{rmrk}

We proceed by constructing a ROM for \eqref{eq_def:PDE_continuous} via a nonlinear Galerkin projection, also known as the Dirac--Frenkel variational principle, cf.~\cite{Dir30,Fre34,Lub08}, and obtain
\begin{equation*}
    \begin{bmatrix} M_1(z(t)) & N(z(t))\alpha(t) \\ \alpha(t)^\top N(z(t))^\top & \alpha(t)^\top M_2(z(t)) \alpha(t)\end{bmatrix} \begin{bmatrix} \dot{\alpha}(t) \\ \dot{z}(t) \end{bmatrix} = \begin{bmatrix} A_1(z(t)) & 0 \\ \alpha(t)^\top A_2(z(t)) & 0 \end{bmatrix}
    \begin{bmatrix} \alpha(t) \\ z(t) \end{bmatrix} + \begin{bmatrix}  B_1(z(t)) \\ \alpha(t)^\top B_2(z(t))\end{bmatrix}u(t)
\end{equation*}
where $M_1,N,M_2\colon \mathbb{R}\to\mathbb{R}^{\tilde\ell\times\tilde\ell}$ are defined via
\begin{equation*}
\begin{aligned}
M_1(z) := \big[\langle \mathcal{T}(z)\phi_i,\;\mathcal{T}(z)\phi_j\rangle_H\big]_{i,j=1}^{\tilde{\ell}},\quad
N(z) := \big[\langle \mathcal{T}(z)\phi_i,\;\mathcal{T}'(z)\phi_j\rangle_H\big]_{i,j=1}^{\tilde{\ell}},\quad
M_2(z) := \big[\langle \mathcal{T}'(z)\phi_i,\;\mathcal{T}'(z)\phi_j\rangle_H\big]_{i,j=1}^{\tilde{\ell}}
\end{aligned}
\end{equation*}
On the right-hand side, we have $A_1,A_2\colon \mathbb{R}\to\mathbb{R}^{\tilde\ell\times\tilde\ell}$ and $B_1,B_2\colon \mathbb{R}\to\mathbb{R}^{\tilde\ell\times m}$ which are defined via
\begin{subequations}
\begin{align}
A_1(z) &:= \big[\langle \mathcal{T}(z)\phi_i,\;\mathcal{A}\big(\mathcal{T}(z)\phi_j\big)\rangle_H\big]_{i,j=1}^{\tilde{\ell}},
&A_2(z) &:= \big[\langle \mathcal{T}'(z)\phi_i,\;\mathcal{A}\big(\mathcal{T}(z)\phi_j\big)\rangle_H\big]_{i,j=1}^{\tilde{\ell}},\\
    \label{eq:ROM_B_matrices}
    B_1(z) &:= \big[\langle \mathcal{T}(z)\phi_i,\;\mathcal{B}e_j\rangle_H\big]_{i,j=1}^{\tilde{\ell},m}, &B_2(z) &:= \big[\langle \mathcal{T}'(z)\phi_i,\;\mathcal{B}e_j\rangle_H\big]_{i,j=1}^{\tilde{\ell},m}.
\end{align}
\end{subequations}
Following the assumptions mentioned in \cite[Section 6]{black_projection-based_2020}, with our specific choice of $\mathcal{T}$, we can further simplify the ROM by eliminating the explicit dependency on the shift $z$ in $M_1$, $N$, $M_2$, $A_1$, and $A_2$.
Subsequently, these matrices can be written as
\begin{equation}
    \label{eq:ROM_N_M2}
    M_1 = I_{\tilde\ell},\quad N = -\big[\langle \phi_i,\;\phi_j'\rangle_H\big]_{i,j=1}^{\tilde{\ell}},\quad M_2 = \big[\langle \phi_i',\;\phi_j'\rangle_H\big]_{i,j=1}^{\tilde{\ell}},\quad A_1 = \big[\langle \phi_i,\;\mathcal{A}\phi_j\rangle_H\big]_{i,j=1}^{\tilde{\ell}},\quad A_2 = -\big[\langle \phi_i',\;\mathcal{A}\phi_j\rangle_H\big]_{i,j=1}^{\tilde{\ell}},
\end{equation}
where we exploited that the modes $\{\phi_i\}_{i=1}^{\tilde{\ell}}$ are orthonormal. 
Moreover, we find that $A_1 = v N,A_2 = v M_2$.
This further simplifies the ROM as follows
\begin{equation}\label{eq_def:sPOD_galerkin_advection}
    \begin{bmatrix} I_{\tilde{\ell}} & N \alpha(t) \\ \alpha(t)^\top N^\top & \alpha(t)^\top M_2 \alpha(t)\end{bmatrix} \begin{bmatrix} \dot{\alpha}(t) \\ \dot{z}(t) \end{bmatrix} = v\begin{bmatrix} N & 0 \\ \alpha(t)^\top M_2 & 0 \end{bmatrix}
    \begin{bmatrix} \alpha(t) \\ z(t) \end{bmatrix} + \begin{bmatrix}  B_1(z(t)) \\ \alpha(t)^\top B_2(z(t))\end{bmatrix}u(t).
\end{equation}
As for the initial condition of the reduced system, for given $z(0)$ we choose $\alpha(0)$ such that the approximation error in
\begin{equation*}
    y_0 \approx \sum^{\tilde{\ell}}_{i=1} \alpha_i(0) \mathcal{T}(z(0)) \phi_i
\end{equation*}
is minimized. 
Performing an orthogonal projection onto the span of the shifted modes $\{\mathcal{T}(z(0)) \phi_j\}^{\tilde{\ell}}_{j=1}$, this leads to
\begin{align*}
    \langle \mathcal{T}(z(0)) \phi_j, y_0\rangle_H &= \Bigl \langle \mathcal{T}(z(0)) \phi_j, \sum^{\tilde{\ell}}_{i=1} \alpha_i(0) \mathcal{T}(z(0)) \phi_i \Bigr\rangle_H  \nonumber
    = \Bigl \langle \phi_j, \sum^{\tilde{\ell}}_{i=1} \alpha_i(0) \phi_i \Bigr\rangle_H=\alpha_j(0) \nonumber\quad\text{for }j=1,\ldots,\tilde\ell.
\end{align*}
Assuming $z(0) = z_0 = 0$  and, hence, $\mathcal{T}(z_0) = I_n$, it follows that $\alpha(0) = [\langle \phi_j, y_0\rangle_H]^{\tilde{\ell}}_{j=1} \in \mathbb{R}^{\tilde{\ell}}$ is the orthogonal projection of the initial value $y_0$ onto the span of the modes $\{\phi_j\}^{\tilde{\ell}}_{j=1}$.  

The choice of the shift operator $\mathcal{T}$ significantly influences the performance of the sPOD method which will only be practicable if the singular values associated with the transformed snapshot set decay rapidly. If the shape functions $b_k$ in the control term can be chosen freely, for operators $\mathcal{A}$ generating a strongly continuous group $\mathcal{S}(t)$ the following proposition demonstrates how certain choices of the shape functions may possibly lead to a low rank of the transformed snapshot matrix.

\begin{prpstn}\label{prop:max_sv_invariance}
    Consider an abstract control system of the form \eqref{eq_def:PDE_continuous} with $\mathcal{A}\colon\mathcal{D}(\mathcal{A})\subset H\to H$ being the infinitesimal generator of a strongly continuous group $S(t)\in\mathcal{L}(H)$, $H$ a Hilbert space, $y_0\in H$, $u\in L^2(0,T; U)$ with $U=\mathbb{R}^m$, and $\mathcal{B}\in\mathcal{L}(U,H)$ defined via \eqref{eq:control_operator} with $b_1,\ldots,b_m\in \mathcal{D}(\mathcal{A})$.
    Moreover, let $\mathrm{span}\{b_1,\dots,b_m\}$ be an $\mathcal{A}$-invariant subspace.
    Then, there exists a family of linear operators $\mathcal{T}\colon\mathbb{R}\to\mathcal{L}(H)$ such that for any given set of discrete time points $t_1,\ldots,t_{n_t}\in[0,T]$ we have
  \begin{align*}
      \mathrm{dim}(\mathrm{span}\{\mathcal{T}(t_1)y(t_1),\dots,\mathcal{T}(t_{n_t})y(t_{n_t})\}) \le \min(m+1,n_t),
  \end{align*}
  where $y$ denotes the mild solution of \eqref{eq_def:PDE_continuous}.
\end{prpstn}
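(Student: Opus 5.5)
The plan is to take $\mathcal{T}(t):=S(-t)=S(t)^{-1}$, which exists because $S$ is a strongly continuous group, and to express the mild solution in the form
\begin{equation*}
  y(t)=S(t)y_0+\int_0^t S(t-s)\mathcal{B}u(s)\,\mathrm{d}s .
\end{equation*}
Applying $\mathcal{T}(t)$ and using the group property $S(-t)S(t-s)=S(-s)$, we obtain
\begin{equation*}
  \mathcal{T}(t)y(t)=y_0+\int_0^t S(-s)\mathcal{B}u(s)\,\mathrm{d}s .
\end{equation*}
The claim will then follow once we show that the integral term always lies in $V:=\mathrm{span}\{b_1,\dots,b_m\}$. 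Granting this, every transformed snapshot lies in $\mathrm{span}\{y_0\}+V$, whose dimension is at most $m+1$. The trivial bound $n_t$ comes from the number of vectors, which gives the minimum.

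The key step is to show that $S(-s)b_k\in V$ for all $s\in\mathbb{R}$. Since $V$ is finite-dimensional and $\mathcal{A}$-invariant, the restriction $A_V:=\mathcal{A}|_V$ is a linear map $V\to V$, represented by a matrix $\tilde A\in\mathbb{R}^{m'\times m'}$ in some basis of $V$. For $b\in V$ define $w(s):=e^{-sA_V}b\in V$. This function is $C^1$ with values in $V\subset\mathcal{D}(\mathcal{A})$ and satisfies $\dot w=-\mathcal{A}w$ with $w(0)=b$. On the other hand, for $b\in\mathcal{D}(\mathcal{A})$ the function $s\mapsto S(-s)b$ is a classical solution of the same equation, because $-\mathcal{A}$ generates the semigroup $s\mapsto S(-s)$. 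By uniqueness of classical solutions of the abstract Cauchy problem, $S(-s)b=e^{-sA_V}b\in V$, and the same argument applies for negative $s$ via $S(s)$. I expect this identification to be the main obstacle. An alternative route is to use that $V$ is a closed, finite-dimensional, $\mathcal{A}$-invariant subspace of $\mathcal{D}(\mathcal{A})$, hence invariant under the resolvent for large $|\lambda|$, and to pass to $S$ through the exponential formula; I would nevertheless rely on the uniqueness argument.

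It remains to place the integral in $V$. We write
\begin{equation*}
  S(-s)\mathcal{B}u(s)=\sum_k u_k(s)\,e^{-sA_V}b_k .
\end{equation*}
This is a Bochner-integrable function with values in the finite-dimensional, hence closed, subspace $V$, because $u\in L^2\subset L^1$ on $[0,T]$ and $e^{-sA_V}$ is bounded on $[0,T]$. Its integral therefore lies in $V$. Consequently $\mathcal{T}(t_j)y(t_j)\in\mathrm{span}\{y_0\}\oplus V$ for every $j$, and
\begin{equation*}
  \dim\bigl(\mathrm{span}\{\mathcal{T}(t_j)y(t_j)\}_{j=1}^{n_t}\bigr)\le\min(\dim V+1,\,n_t)\le\min(m+1,\,n_t),
\end{equation*}
which completes the proof.
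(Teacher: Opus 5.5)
Your proposal is correct and follows the paper's proof: you make the same choice $\mathcal{T}(t)=S(-t)$, use the same rewriting $\mathcal{T}(t)y(t)=y_0+\int_0^t S(-s)\mathcal{B}u(s)\,\mathrm{d}s$, and reach the same conclusion that every transformed snapshot lies in $\mathrm{span}\{y_0,b_1,\dots,b_m\}$. The only difference is that the paper cites the $S(-s)$-invariance of $\mathrm{span}\{b_1,\dots,b_m\}$ from Curtain--Zwart, whereas you prove it directly by comparing $S(-s)b$ with $e^{-sA_V}b$ via uniqueness of classical solutions, which is a valid, self-contained justification.
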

\begin{proof}
    The mild solution is given by
    \begin{align*}
        y(t)    &=S(t)y_0 + \int_0^tS(t-s)\mathcal{B}u(s)\,\mathrm{d}s =S(t)y_0 + \int_0^tS(t-s)\sum_{k=1}^m b_k u_k(s)\,\mathrm{d}s\\ 
                &=S(t)\left(y_0 + \int_0^tS(-s)\sum_{k=1}^m b_k u_k(s)\,\mathrm{d}s\right),
    \end{align*}
    see e.g.~\cite[Def.~3.1.4]{CurZ95}.
    Since $\mathrm{span}\{b_1,\dots,b_m\}$ is an $\mathcal{A}$-invariant subspace, it is also an $S(-s)$-invariant subspace \cite[Lemma 2.5.4]{CurZ95}. Hence, there exist $\beta_{1,k},\dots,\beta_{m,k} \colon [0,T]\to\mathbb R$ for $k=1,\ldots,m$ such that
     \begin{align*}
        y(t)= S(t)\left(y_0 + \sum_{i=1}^m b_i \sum_{k=1}^m\int_0^t  \beta_{i,k}(s)  u_k(s)\,\mathrm{d}s\right).
    \end{align*}
    Consequently, choosing $\mathcal{T}(t)=\mathcal{S}(-t)$, for any $j\in\lbrace 1,\ldots,n_t\rbrace$ we obtain
    \begin{align*}
        \mathcal{T}(t_j)y(t_j) = y_0 + \sum_{i=1}^m b_i \sum_{k=1}^m\int_0^t  \beta_{i,k}(s)  u_k(s)\,\mathrm{d}s \in \mathrm{span}\{y_0,b_1,\dots,b_m\}.
    \end{align*}
    \end{proof}
  
In the numerical examples, we will use this strategy and consider $b_k$ as real-valued functions representing pairs of complex conjugate eigenfunctions of the operator $\mathcal{A}=-v\tfrac{\mathrm{d}}{\mathrm{d}x}.$
 
\section{Optimal control with reduced-order models}\label{sec:NOC_MOR_methods}

In this section, we briefly recall standard first-order necessary optimality conditions for the POD-G method. 
Subsequently, we establish theoretical results regarding the existence of solutions to the reduced optimal control problem associated with the sPOD-G method. In particular, we show the existence of a solution to the reduced state equation provided $u$ is small enough and, as a consequence, obtain the existence of an optimal control if the regularization parameter $\mu$ is chosen large enough. We also derive the necessary optimality conditions for the underlying optimal control problem.

\subsection{POD-based optimal control}\label{ssec:POD-G}

As the POD reduced state equation \eqref{eq_def:POD_Galerkin_continuous} is linear, deriving the associated optimality system is straightforward and yields
\begin{subnumcases}{\mathrm{OC}_{\scaleto{\mathrm{POD-G}}{3pt}}:=} 
    \dot{\alpha}(t) = A_\ell \alpha(t) + B_{\ell} u(t) \label{eq_def:OC_FRTO_PODG_1},\\
    \alpha(0)  = \alpha_0 \label{eq_def:OC_FRTO_PODG_2},\\[1em]
    - \dot{\lambda}^{\ell}(t) = A^\top_{\ell} \lambda^{\ell}(t) + \alpha(t) - \hat{y}_\mathrm{d}(t) \label{eq_def:OC_FRTO_PODG_3}, \\
    \lambda^{\ell}(T) = 0  \label{eq_def:OC_FRTO_PODG_4},\\[1em]
    \mu u(t) + B^\top_{\ell} \lambda^{\ell}(t) = 0\;. \label{eq_def:OC_FRTO_PODG_5}
\end{subnumcases}
The first two equations are the reduced-order state equations already given in \eqref{eq_def:POD_Galerkin_continuous}.
Equations \eqref{eq_def:OC_FRTO_PODG_3}-\eqref{eq_def:OC_FRTO_PODG_4} are the reduced-order adjoint equations derived from the reduced-order state equation. Here, $\lambda^{\ell}(t) \in \mathbb{R}^{\ell}$ is the reduced adjoint and $\hat{y}_\mathrm{d}(t) = [\langle \phi_j, y_\mathrm{d}(t)\rangle_H ]^{\ell}_{j=1} \in \mathbb{R}^{\ell}$. Equation \eqref{eq_def:OC_FRTO_PODG_5} is the optimality condition for the control. Let us emphasize that \eqref{eq_def:OC_FRTO_PODG_1}-\eqref{eq_def:OC_FRTO_PODG_5} correspond to the (constrained) minimization of the cost functional \eqref{eq_def:FOM_costFunc_continuous} given as
\begin{equation}\label{eq_def:PODG_costFunc_continuous}
    \mathcal{J}_{\scaleto{\mathrm{POD-G}}{3pt}}(\alpha, u) = \frac{1}{2} \int^{T}_0 \norm{\sum^{\ell}_{i=1} \alpha_i(t)\phi_i - y_\mathrm{d}(t)}^2_H \, \mathrm{d}t + \frac{\mu}{2} \int_0^{T}\norm{u(t)}^2_{U}\,\mathrm{d}t.
\end{equation}

\subsection{sPOD-based optimal control}\label{ssec:sPOD-G}

For the sPOD-G method, we consider the optimization problem
\begin{equation}\label{eq_def:sPOD_costFunc_continuous}
    \underset{\begin{subarray}{c}
  u \in L^2(0, T; U)
  \end{subarray}}{\min} \:\:\mathcal{J}_{\scaleto{\mathrm{sPOD-G}}{3pt}}(\alpha, z, u):= \frac{1}{2} \int^{T}_0 \norm{\sum^{\tilde{\ell}}_{i=1} \alpha_i(t)\mathcal{T}(z(t))\phi_i - y_\mathrm{d}(t)}^2_H \, \mathrm{d}t + \frac{\mu}{2} \int_0^{T}\norm{u(t)}^2_{U}\,\mathrm{d}t\; \quad \text{s.t. } \eqref{eq_def:sPOD_galerkin_advection}, \alpha(0)=\alpha_0, z(0)=z_0.
\end{equation}
Note that, in contrast to the previous subsection, we are faced with an optimal control problem that is constrained by the nonlinear reduced state equation \eqref{eq_def:sPOD_costFunc_continuous} for which we understand solutions in the sense of Carath\'eodory, see, e.g., \cite[Chapter I.5.1]{Hal80}. We obtain the following well-posedness result for the nonlinear reduced-order equation.
\begin{thrm}\label{thrm:unique_solution}
    For given $l,T>0$, $\phi_1,\dots,\phi_{\tilde{\ell}}\in C^1_\mathrm{per}([0,l])\vcentcolon= \lbrace \phi\in C^1([0,l])\;\mid\;\phi(0)=\phi(l),\;\phi'(0)=\phi'(l)\rbrace$ orthonormal w.r.t.~$\langle\cdot,\cdot\rangle_H$, $u\in L^2(0,T;U)$, $b_1,\ldots,b_m\in H^1_\mathrm{per}(0,l)\setminus\lbrace 0\rbrace$ we consider the ROM \eqref{eq_def:sPOD_galerkin_advection} with coefficient matrices specified in \eqref{eq:ROM_B_matrices}, \eqref{eq:ROM_N_M2}, periodic shift operator $\mathcal{T}$ and control operator $\mathcal{B}$ as defined in \eqref{eq:periodic_shift_operator} and \eqref{eq:control_operator}, respectively, and $H=L^2(0,l)$, $U=\mathbb{R}^m$.
    Moreover, let $\phi_1,\dots,\phi_{\tilde{\ell}},\phi_1',\dots,\phi_{\tilde{\ell}}'$ be linearly independent, $\alpha_0\in\mathbb{R}^{\tilde{\ell}}\setminus\lbrace 0\rbrace$ and $z_0\in\mathbb{R}$ be given initial values, and $u$ satisfy
    \begin{align}\label{eq:l2_bnd_u}
        \norm{u}^2_{L^2(0, T;U)} < \frac{\norm{\alpha_0}^2}{\norm{\mathcal{B}}^2_{\mathcal{L}(U, H)}  \exp(1)T\tilde{\ell} } \;.
    \end{align}
    Then, the initial value problem consisting of \eqref{eq_def:sPOD_galerkin_advection} and $\alpha(0)=\alpha_0$, $z(0)=z_0$ has a unique solution $(\alpha, z) \in H^1(0, T;\mathbb{R}^{\tilde{\ell}}) \times H^1(0, T)$.
    In addition, 
    there exists a constant $C>0$ with
    \begin{align}\label{eq:a_priori_bnds}
    \max(\|\alpha\|_{H^1(0, T;\mathbb{R}^{\tilde{\ell}})},\|z\|_{H^1(0, T)}) \le
    C.
    \end{align}
\end{thrm}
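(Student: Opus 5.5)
We will solve the implicit system \eqref{eq_def:sPOD_galerkin_advection} for $(\dot\alpha,\dot z)$ and reduce everything to an explicit Carath\'eodory ODE. The mass matrix is
\[
M(\alpha)=\begin{bmatrix} I & N\alpha\\ \alpha^\top N^\top & \alpha^\top M_2\alpha\end{bmatrix},
\]
and its Schur complement with respect to the identity block is
\[
s(\alpha)=\alpha^\top(M_2-N^\top N)\alpha .
\]
We first show that $M_2-N^\top N$ is positive definite. For a coefficient vector $\alpha$, set $w=\sum_i\alpha_i\phi_i'$. Then $\alpha^\top M_2\alpha=\|w\|_H^2$. Moreover, $N\alpha=-[\langle\phi_i,w\rangle_H]_i$, so $\|N\alpha\|^2=\|Pw\|_H^2$, where $P$ is the orthogonal projector onto $\mathrm{span}\{\phi_i\}$. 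Hence
\[
s(\alpha)=\|(I-P)w\|_H^2 .
\]
This is strictly positive for $\alpha\neq 0$ by the linear independence of $\phi_1,\dots,\phi_{\tilde\ell},\phi_1',\dots,\phi_{\tilde\ell}'$. Consequently $s(\alpha)\ge c_0\|\alpha\|^2$ with $c_0>0$ the smallest eigenvalue, and $M(\alpha)$ is invertible whenever $\alpha\neq0$.

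Block elimination then gives an explicit system
\[
\dot z=\frac{\alpha^\top(vM_2-N^\top vN)\alpha+\alpha^\top(B_2(z)-N^\top B_1(z))u}{s(\alpha)},\qquad
\dot\alpha=vN\alpha+B_1(z)u-N\alpha\,\dot z .
\]
The first term of $\dot z$ equals the constant $v$, since $vM_2-vN^\top N=v(M_2-N^\top N)$. So $\dot z=v+\alpha^\top G(z)u/s(\alpha)$, where $G=B_2-N^\top B_1$. Because $b_k\in H^1_{\mathrm{per}}$, the maps $z\mapsto B_1(z),B_2(z)$ are $C^1$: by Remark~\ref{rem:diff_T_vs_phi} we may move the shift onto $b_k$ and use $\mathcal T(z)^*\mathcal B e_k$, which is differentiable in $z$. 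Hence the right-hand side is measurable in $t$, locally Lipschitz in $(\alpha,z)$ on $\{\alpha\neq0\}\times\mathbb R$, and bounded there by $(c+c|u(t)|)$-type functions with $u\in L^2\subset L^1$. Carath\'eodory's existence and uniqueness theorem (\cite{Hal80}) yields a unique local absolutely continuous solution on a maximal interval $[0,t^*)$. The solution can only cease to exist if $\alpha$ approaches $0$ or blows up.

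The key step is an energy estimate. Take the inner product of the first block row with $\alpha$. Since $\alpha^\top N\alpha=0$ (indeed $N$ is skew: $\langle\phi_i,\phi_j'\rangle=-\langle\phi_i',\phi_j\rangle$ by periodicity) and the reconstructed state has norm $\|\alpha\|$, we get
\[
\tfrac12\tfrac{d}{dt}\|\alpha\|^2=\alpha^\top B_1(z)u .
\]
This is cleaner seen as $\frac{d}{dt}\|y\|_H^2$ for $y=\mathcal T(z)\sum\alpha_i\phi_i$: the Galerkin residual is orthogonal to the tangent space, which contains $y$, and $\langle y,\mathcal A y\rangle=0$. Hence
\[
\bigl|\tfrac{d}{dt}\|\alpha\|\bigr|\le\|B_1(z)\|\,|u|\le\sqrt{\tilde\ell}\,\|\mathcal B\|\,|u| ,
\]
so $\|\alpha(t)\|\ge\|\alpha_0\|-\sqrt{\tilde\ell}\|\mathcal B\|\sqrt T\|u\|_{L^2}$ by Cauchy--Schwarz. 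Condition \eqref{eq:l2_bnd_u}, with its factor $e$ to spare (I expect the paper derives it from a Gr\"onwall step on $\|\alpha\|^2$ that loses an $\exp(1)$), keeps this lower bound positive. The same identity gives an upper bound on $\|\alpha\|$. So $\alpha$ stays in a compact annulus away from $0$, $t^*>T$, and the solution is global on $[0,T]$.

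The $H^1$ bounds \eqref{eq:a_priori_bnds} then follow. We have $|\dot z|\le |v|+C|u|/\|\alpha\|$ and $|\dot\alpha|\le C(1+|u|)(1+|\dot z|)$, all with $\|\alpha\|$ bounded below, so both derivatives are in $L^2$ since $u\in L^2$. Bounds on $z$ and $\alpha$ themselves follow by integrating these derivatives. The main obstacle is the lower bound on $\|\alpha\|$, i.e.\ the energy identity together with the exact constant in \eqref{eq:l2_bnd_u}. If a direct argument does not reproduce the constant $\exp(1)T\tilde\ell$, I would bound $\frac{d}{dt}\|\alpha\|^2\ge-\|\alpha\|^2/T-T\tilde\ell\|\mathcal B\|^2|u|^2$ via Young's inequality and apply Gr\"onwall, which gives $\|\alpha(t)\|^2\ge e^{-1}\bigl(\|\alpha_0\|^2-T\tilde\ell\|\mathcal B\|^2\|u\|^2_{L^2}\cdot e\bigr)$. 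This is positive exactly under \eqref{eq:l2_bnd_u}.
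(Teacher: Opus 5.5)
Your argument is correct and follows the paper's skeleton. You rewrite the system as an explicit ODE using invertibility of the mass matrix for $\alpha\neq0$, obtain a unique local Carath\'eodory solution with the blow-up alternative, and derive $\tfrac12\tfrac{\mathrm d}{\mathrm dt}\|\alpha\|^2=\alpha^\top B_1(z)u$ from the skew-symmetry of $N$. You then bound $\|\alpha\|$ from both sides and control $z$ through $\dot z=v+\alpha^\top(B_2(z)-N^\top B_1(z))u/\alpha^\top(M_2-N^\top N)\alpha$. Your identity $s(\alpha)=\|(I-P)w\|_H^2$ is an explicit form of the paper's argument that the Schur complement of the Gramian $M_c$ of $\{\phi_i,\phi_i'\}$ is positive definite.

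The real difference is the lower bound on $\|\alpha\|$. The paper uses Young's inequality with $\epsilon^2=1/T$ and the integrating factor $e^{t/T}$; that is exactly where the factor $\exp(1)$ in \eqref{eq:l2_bnd_u} comes from, and your fallback reproduces it. You instead integrate $\bigl|\tfrac{\mathrm d}{\mathrm dt}\|\alpha\|\bigr|\le\sqrt{\tilde\ell}\,\|\mathcal B\|_{\mathcal L(U,H)}\|u\|_U$ directly. This is simpler and stronger. It needs no factor $\exp(1)$, and not even $\tilde\ell$, since Bessel's inequality for the orthonormal family $\{\mathcal T(z)\phi_i\}_i$ gives $\|B_1(z)u\|\le\|\mathcal B\|_{\mathcal L(U,H)}\|u\|_U$. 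Under \eqref{eq:l2_bnd_u} it gives $\|\alpha(t)\|>(1-e^{-1/2})\|\alpha_0\|$ uniformly in $u$. The constant in \eqref{eq:a_priori_bnds} is therefore uniform over all $u$ satisfying \eqref{eq:l2_bnd_u}. The paper's lower bound $T\tilde\ell\|\mathcal B\|^2_{\mathcal L(U,H)}\zeta$, by contrast, degenerates as $\|u\|_{L^2(0,T;U)}$ approaches the threshold.

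Two small repairs are needed:
\begin{itemize}
\item Before concluding $t^*>T$ you must also exclude $|z|\to\infty$. Your later estimate on $\dot z$ provides this, using that $B_1,B_2$ are bounded uniformly in $z$.
\item Your product bound $|\dot\alpha|\le C(1+|u|)(1+|\dot z|)$, combined with $|\dot z|\le C(1+|u|)$, only gives $\dot\alpha\in L^1$. Use instead the additive bound $|\dot\alpha|\le C(1+|u|+|\dot z|)$, which is what $\dot\alpha=vN\alpha+B_1(z)u-N\alpha\dot z$ with bounded $\alpha$ actually yields; this gives $\dot\alpha\in L^2$.
\end{itemize}
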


\begin{proof}
  By assumption $\{\phi_i,\phi_i'\},i=1,\dots,\tilde{\ell}$ are linearly independent and $\alpha(0) \neq 0$. As a consequence, we obtain the invertibility of the matrix 
  \begin{align*}
      \begin{bmatrix} I_{\tilde{\ell}} & N \alpha(0) \\ \alpha(0)^\top N^\top & \alpha(0)^\top M_2 \alpha(0)\end{bmatrix}=
      \begin{bmatrix} I_{\tilde{\ell}} & 0  \\ 0 & \alpha(0)^\top\end{bmatrix} \underbrace{\begin{bmatrix} I_{\tilde{\ell}} & N  \\ N^\top & M_2\end{bmatrix}}_{M_c} \begin{bmatrix} I_{\tilde{\ell}} & 0  \\ 0 & \alpha(0)\end{bmatrix}
  \end{align*}
  since $M_c$ is the (positive definite) Gramian matrix associated with $\{\phi_i,\phi_i'\},i=1,\dots,\tilde{\ell}$. 
  Since the set of positive definite matrices is open, we can thus find a neighborhood around $(0,\alpha(0),z(0))$ on which \eqref{eq_def:sPOD_galerkin_advection} can be expressed as an explicit ODE of the form
\begin{equation}
    \label{eq:ROM_as_explicit_ODE}
         \begin{bmatrix} \dot{\alpha} \\ \dot{z} \end{bmatrix} = \begin{bmatrix} I_{\tilde{\ell}} & N \alpha \\ \alpha^\top N^\top & \alpha^\top M_2 \alpha\end{bmatrix} ^{-1} \left(v \begin{bmatrix} N & 0 \\ \alpha^\top M_2 & 0 \end{bmatrix}
        \begin{bmatrix} \alpha \\ z \end{bmatrix} + \begin{bmatrix}  B_1(z) \\ \alpha^\top B_2(z)\end{bmatrix}u\right).
    \end{equation}
Due to the continuous differentiability of the modes, the properties of the shift operator, cf.~Remark~\ref{rem:diff_T_vs_phi}, and the fact that $u$ is in $L^2(0,T;U)$, the right-hand side satisfies the assumptions of Theorems~5.2 and 5.3 in \cite{Hal80}, which yields 
that there exists a maximal interval of existence $[0,\tilde{t})$ with $\tilde{t}>0$ or $[0,\infty)$ for which we have a unique solution $(\alpha,z)$ in the sense of Carath\'{e}odory. 
Moreover, for the case of a finite maximal interval of existence, $(\alpha(t),z(t))$ tends to the boundary of the domain of definition of the right-hand side in \eqref{eq:ROM_as_explicit_ODE} as $t$ tends to $\tilde{t}$, i.e., we have $\lVert(\alpha(t),z(t))\rVert\to\infty$ or $\alpha(t)\to 0$ as $t\to\tilde{t}$.
We proceed by showing that $(\alpha,z)$ remain bounded from above and that $\alpha$ remains bounded from below on $[0,T]$. 

We begin by multiplying the first equation in \eqref{eq_def:sPOD_galerkin_advection} with $\alpha^\top$ which, since we have $N=-N^\top$ leads to
\begin{align}\label{eq:aux1}
    \frac{1}{2}\frac{\mathrm{d}}{\mathrm{d}s} \|\alpha(s)\|^2=\alpha(s)^\top \dot{\alpha}(s)=\alpha(s)^\top B_1(z(s))u(s).
\end{align}
Integration over $[0,t]$ with $t< \tilde{t}$ thus yields
\begin{align*}
    \|\alpha(t)\|^2 = \| \alpha(0)\|^2 +2 \int_0^t \alpha(s)^\top B_1(z(s))u(s)\,\mathrm{d}s.
\end{align*}
Using the Cauchy-Schwarz and Young's inequality for $\epsilon >0$, we subsequently obtain
\begin{align}\label{eq:aux2}
\|\alpha(t)\|^2 \le \| \alpha(0)\|^2 + \epsilon^2 \int_0^t \|\alpha(s)\|^2 \,\mathrm{d}s+ \frac{1}{\epsilon^2} \int_0^t \| B_1(z(s))u(s)\|^2 \,\mathrm{d}s.
\end{align}
Recalling the definition of $B_1(z(s))=[\langle \mathcal{T}(z(s))\phi_i, \mathcal{B}e_j\rangle_H]^{\tilde{\ell},m}_{i,j=1}$, using that $\mathcal{T}(z(s))$ is an isometry on $H$ and orthonormality of $\phi_i$, we find that
\begin{align}\label{eq:bound_B1}
    \norm{B_1(z(s))u(s)}^2 &= \sum_{i=1}^{\tilde{\ell}} |\langle \mathcal{T}(z(s))\phi_i, \mathcal{B}u(s)\rangle_H|^2 \le \tilde{\ell} \norm{\mathcal{B}}^2_{\mathcal{L}(U, H)} \norm{u(s)}_U^2 .
\end{align}
Returning to \eqref{eq:aux2}, the previous estimate implies
\begin{align*}
\|\alpha(t)\|^2 \le \| \alpha(0)\|^2 + \epsilon^2 \int_0^t \|\alpha(s)\|^2 \,\mathrm{d}s+ \frac{\tilde{\ell}\| \mathcal{B}\|^2_{\mathcal{L}(U,H)}}{\epsilon^2} \|u\|_{L^2(0,t;U)}^2
\end{align*} 
which, using Gr\"onwall's inequality, also yields
\begin{align*}
    \|\alpha(t)\|^2 \le \left(\norm{\alpha(0)}^2 + \frac{\norm{\mathcal{B}}^2_{\mathcal{L}(U, H)} \tilde{\ell}}{\epsilon^2}  \norm{u}_{L^2(0, t;U)}^2 \right) e^{\int_0^t \epsilon^2\,\mathrm{d}s}.
\end{align*}
If $u$ is small as announced in the theorem, we can continue with
 \begin{align}\label{eq:linf_bnd_alpha_upper}
    \|\alpha(t)\|^2 < \norm{\alpha(0)}^2 (1+ \tfrac{1}{\mathrm{exp}(1)\epsilon ^2 T}) e^{\epsilon^2 T}\le \norm{\alpha(0)}^2 (e + 1).
\end{align}
where in the last inequality we set $\epsilon^2=\tfrac{1}{T} >0$ to minimize the upper bound.

For a lower bound on $\|\alpha(s)\|$, we conclude from \eqref{eq:aux1} that
\begin{align*}
    \frac{\mathrm{d}}{\mathrm{d}s} \|\alpha(s)\|^2=2\alpha(s)^\top B_1(z(s))u(s) \ge -2 \sum_{i=1}^{\tilde{\ell}} |\alpha_i(s)| \| \mathcal{T}(z(s))\phi_i\|_H \|\mathcal{B}\|_{\mathcal{L}(U,H)} \| u(s)\|_U.
\end{align*}
Similarly as before, we can utilize the properties of $\mathcal{T}(z(s))$ and $\phi_i$ as well as Young's inequality (with $\epsilon^2 =\tfrac{1}{T}$) to arrive at 
\begin{align*}
    \frac{\mathrm{d}}{\mathrm{d}s} \|\alpha(s)\|^2\ge -\tfrac{1}{T}\|\alpha(s)\|^2- T\tilde{\ell} \|\mathcal{B}\|_{\mathcal{L}(U,H)}^2 \|u(s)\|_U^2.
\end{align*}
As a consequence, it holds that 
\begin{align*}
    \frac{\mathrm{d}}{\mathrm{d}s} (\mathrm{e}^{\tfrac{s}{T}}\norm{\alpha(s)}^2) &= \tfrac{1}{T} e^{\tfrac{s}{T}} \|\alpha(s)\|^2 + e^{\tfrac{s}{T}}\frac{\mathrm{d}}{\mathrm{d}s} \|\alpha(s)\|^2 \\
    &\ge  \tfrac{1}{T} e^{\tfrac{s}{T}} \|\alpha(s)\|^2 + e^{\tfrac{s}{T}}\left(-\tfrac{1}{T}\|\alpha(s)\|^2- T\tilde{\ell} \|\mathcal{B}\|_{\mathcal{L}(U,H)}^2 \|u(s)\|_U^2\right) \\
    &=- e^{\tfrac{s}{T}}T\tilde{\ell} \|\mathcal{B}\|_{\mathcal{L}(U,H)}^2 \|u(s)\|_U^2.
\end{align*}
Integration over $[0,t]$ with $t< \tilde{t}$ shows that
\begin{align*}
     e^{\tfrac{t}{T}}\|\alpha(t)\|^2 \ge \|\alpha(0)\|^2 - T\tilde{\ell} \|\mathcal{B}\|^2_{\mathcal{L}(U,H)} \int_0^t e^{\tfrac{s}{T}} \|u(s)\|_U^2\,\mathrm{d}s
\end{align*}
as well as 
\begin{align*}
     \|\alpha(t)\|^2 \ge e^{-\tfrac{t}{T}}\|\alpha(0)\|^2 - T\tilde{\ell} \|\mathcal{B}\|^2_{\mathcal{L}(U,H)} \int_0^t e^{\tfrac{s-t}{T}} \|u(s)\|_U^2\,\mathrm{d}s 
     \ge  e^{-1}\|\alpha(0)\|^2 - T\tilde{\ell} \|\mathcal{B}\|^2_{\mathcal{L}(U,H)} \|u\|_{L^2(0,t;U)}^2 .
\end{align*}
From here, we conclude that $\|\alpha(t)\|$ remains bounded away from zero if 
\begin{align*}
  \norm{u}^2_{L^2(0, t;U)}\le  \norm{u}^2_{L^2(0, T;U)} < \frac{\norm{\alpha(0)}^2}{\norm{\mathcal{B}}^2_{\mathcal{L}(U, H)}  \exp(1)T\tilde{\ell} }.
\end{align*}
Defining $\zeta:=\frac{\norm{\alpha(0)}^2}{\norm{\mathcal{B}}^2_{\mathcal{L}(U, H)}  \exp(1)T\tilde{\ell} } - \norm{u}^2_{L^2(0, T;U)}$, we in particular have shown that
\begin{align}\label{eq:linf_bnds_alpha}
T\tilde{\ell}\norm{\mathcal{B}}^2_{\mathcal{L}(U, H)}\zeta < \|\alpha(t)\|^2 < (e + 1)\|\alpha(0)\|^2
\end{align}
for all $t\in [0,\tilde{t}).$

    For bounds on $z$, we define $g(t):=\alpha(t)^\top( M_2 -  N^\top N) \alpha(t)$ and first note that this term is uniformly bounded away from $0$ since $\|\alpha(t) \|> \norm{\mathcal{B}}_{\mathcal{L}(U, H)}\sqrt{T\tilde{\ell}\zeta}$ and $M_2- N^\top N$ is the positive definite Schur complement of the $I_{\tilde{\ell}}$-block of the positive definite matrix $M_c.$ Using the representation $\dot{\alpha} = -N\alpha \dot{z} + v N\alpha + B_1(z)u$ in the second equation of \eqref{eq_def:sPOD_galerkin_advection} we obtain
    \begin{align}\label{eq:deriv_z}
        \dot{z}(t) = v+g(t)^{-1} \alpha(t)^\top (B_2(z(t))u(t)-N^\top B_1(z(t))u(t))
    \end{align}
    as well as 
    \begin{align*}
        z(t) = z(0) + vt +\int^{t}_0 g(s)^{-1}\alpha(s)^\top (B_2(z(s))u(s)-N^\top B_1(z(s))u(s)) \, \mathrm{d}s.
    \end{align*}
    Utilizing \eqref{eq:bound_B1} and \eqref{eq:linf_bnds_alpha}, it follows that
    \begin{align}\label{eq:aux3}
        |z(t)|\le |z(0)| + |v| T + \sqrt{e + 1} \|\alpha(0)\|\int_0^t |g(s)|^{-1} (\|B_2(z(s))u(s)\|+\sqrt{\tilde{\ell}}\|N\|  \|\mathcal{B}\|_{\mathcal{L}(U,H)} \| u(s)\|_U )\,\mathrm{d}s.
    \end{align}
     With arguments similar to those provided for \eqref{eq:bound_B1} and using Remark~\ref{rem:diff_T_vs_phi}, we can show that
     \begin{align}\label{eq:bound_B2}
 \norm{B_2(z(s))u(s)} &= \sum_{i=1}^{\tilde{\ell}} |\langle \mathcal{T}'(z(s))\phi_i, \mathcal{B}u(s)\rangle_H| 
 =\sum_{i=1}^{\tilde{\ell}} |\langle \mathcal{T}(z(s))\phi_i', \mathcal{B}u(s)\rangle_H| 
 \le C_{\phi'} \norm{\mathcal{B}}_{\mathcal{L}(U, H)} \norm{u(s)}_U
     \end{align}
     where $C_{\phi'} = \sum^{\tilde{\ell}}_{i=1} \norm{\phi'_i}_H.$ From the lower bound in \eqref{eq:linf_bnds_alpha}  we find      $
         |g(t)|>T\tilde{\ell}\norm{\mathcal{B}}^2_{\mathcal{L}(U, H)}\zeta \lambda_{\min}      $ where $\lambda_{\min}$ denotes the smallest eigenvalue of the positive definite matrix $M_2-N^\top N$. 
         Combining this estimate with \eqref{eq:bound_B2}, we can return to \eqref{eq:aux3} to conclude
         \begin{align*}
             |z(t)| \le |z(0)| + |v|T + \sqrt{e + 1}(T\tilde{\ell}\norm{\mathcal{B}}^2_{\mathcal{L}(U, H)}\zeta \lambda_{\min})^{-1}\|\alpha(0)\| \|\mathcal{B}\|_{\mathcal{L}(U,H)} \left(\sqrt{\tilde{\ell}}\|N\|+C_{\phi'} \right) \int_0^t \|u(s)\|_U \,\mathrm{d}s
         \end{align*}
    and, in particular, that
    \begin{align}\label{eq:linf_bnd_z}
    \|z\|_{L^\infty(0,\tilde{t})}\le C_1 (1 +|z(0)| + \|\alpha(0)\| \|u\|_{L^2(0,T;U)})
    \end{align}
    for some constant $C_1$, independent of $\tilde{t}.$ Together with \eqref{eq:linf_bnds_alpha} this shows that the solution $(\alpha(t),z(t))$ remains bounded on $[0,\tilde{t})$, implying it exists globally on $[0,T].$

    For the a priori bounds \eqref{eq:a_priori_bnds}, we first revisit \eqref{eq:deriv_z} to obtain a constant $C_2$ such that
    \begin{align*}
        |\dot{z}(t)| \le C_2 (1 + \|u(t)\|_U)
    \end{align*}
    for almost every $t$ in $[0,T]$. Squaring and integrating both sides, together with \eqref{eq:linf_bnd_z} readily yields the bound for $\|z\|_{H^1(0,T)}$. Finally, from the first equation in \eqref{eq_def:sPOD_galerkin_advection} it follows that there exists a constant $C_3>0$ with
    \begin{align*}
        \|\dot{\alpha}\|_{L^2(0,T;\mathbb R^{\tilde{\ell}})} &\le  \|N\| \|\alpha \dot{z}\|_{L^2(0,T;\mathbb R^{\tilde{\ell}})}  + |v| \|N\| \|\alpha\|_{L^2(0,T;\mathbb R^{\tilde{\ell}})} + \|B_1(z)u\|_{L^2(0,T;\mathbb R^{\tilde{\ell}})} \\
        &\le C_3 (\| \alpha\|_{L^\infty(0,T;\mathbb R^{\tilde{\ell}})} \|\dot{z}\|_{L^2(0,T)} + \|\alpha\|_{L^2(0,T;\mathbb R^{\tilde{\ell}})} + \|u\|_{L^2(0,T;U)})
    \end{align*}
     which finishes the proof.
\end{proof}

As we can exploit that by increasing the regularization parameter $\mu>0$, the previous smallness condition on the control can be enforced, we also obtain the existence of a minimizer. 

\begin{thrm}\label{thrm:control_existence}
  Let the assumptions of Theorem~\ref{thrm:unique_solution} be satisfied.
  Then, for given $y_{\mathrm{d}} \in L^2(0,T;H)$ and sufficiently large $\mu>0$, there exists an optimal control $\bar{u}$ to \eqref{eq_def:sPOD_costFunc_continuous}.
\end{thrm}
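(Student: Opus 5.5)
We will use the direct method of the calculus of variations, restricted to the set of admissible controls on which Theorem~\ref{thrm:unique_solution} provides a well-defined control-to-state map. Set
\[
\rho := \frac{\norm{\alpha_0}^2}{\norm{\mathcal{B}}^2_{\mathcal{L}(U,H)}\exp(1)T\tilde{\ell}}, \qquad U_{\mathrm{ad}}^\delta := \bigl\{u\in L^2(0,T;U) : \norm{u}^2_{L^2(0,T;U)} \le \rho-\delta\bigr\}
\]
for a fixed $\delta\in(0,\rho)$, for instance $\delta=\rho/2$. By Theorem~\ref{thrm:unique_solution}, every $u\in U_{\mathrm{ad}}^\delta$ yields a unique state $(\alpha_u,z_u)$. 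Since $\zeta\ge\delta$ on this set, the constants in \eqref{eq:linf_bnds_alpha}, \eqref{eq:linf_bnd_z} and \eqref{eq:a_priori_bnds} can be chosen uniformly over $U_{\mathrm{ad}}^\delta$.

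\textbf{Step 1: taking $\mu$ large confines minimizers to $U_{\mathrm{ad}}^\delta$.} The control $u=0$ is admissible, and its cost
\[
\mathcal{J}_0:=\mathcal{J}_{\scaleto{\mathrm{sPOD-G}}{3pt}}(\alpha_0\text{-trajectory},z,0)
\]
does not depend on $\mu$. For $u=0$ we have $\|\alpha(t)\|=\|\alpha_0\|$ by \eqref{eq:aux1} and $\dot z=v$. Hence any $u$ with $\mathcal{J}(u)\le\mathcal{J}_0$ satisfies
\[
\tfrac{\mu}{2}\norm{u}^2_{L^2}\le \mathcal{J}_0,\quad\text{i.e.}\quad \norm{u}^2_{L^2}\le 2\mathcal{J}_0/\mu .
\]
Choosing $\mu\ge 2\mathcal{J}_0/(\rho-\delta)$, we may therefore minimize over $U_{\mathrm{ad}}^\delta$ without loss of generality: the infimum over all controls for which a solution exists coincides with the infimum over $U_{\mathrm{ad}}^\delta$. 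The problem is then well-defined, and $j:=\inf_{U_{\mathrm{ad}}^\delta}\mathcal{J}\ge 0$ is finite.

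\textbf{Step 2: compactness.} Take a minimizing sequence $u_n\in U_{\mathrm{ad}}^\delta$. Since $U_{\mathrm{ad}}^\delta$ is a closed ball, it is weakly closed, so after extracting a subsequence $u_n\rightharpoonup\bar u$ in $L^2(0,T;U)$ with $\bar u\in U_{\mathrm{ad}}^\delta$. By the uniform bound \eqref{eq:a_priori_bnds}, $(\alpha_n,z_n)$ is bounded in $H^1$. Hence, for a further subsequence, $(\alpha_n,z_n)\rightharpoonup(\bar\alpha,\bar z)$ weakly in $H^1$, and by the compact embedding $H^1(0,T)\hookrightarrow C([0,T])$ the convergence is uniform.

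\textbf{Step 3: passing to the limit in the state equation.} This is the main obstacle, because the ROM is nonlinear: products such as $N\alpha_n\dot z_n$ and $B_1(z_n)u_n$ appear. We use the explicit representation
\[
\dot\alpha=-N\alpha\dot z+vN\alpha+B_1(z)u
\]
together with \eqref{eq:deriv_z}, and write both equations in integrated form. Consider first the terms with coefficients in $z_n$. Remark~\ref{rem:diff_T_vs_phi} and $\phi_i\in C^1_{\mathrm{per}}$ show that $z\mapsto B_1(z),B_2(z)$ is Lipschitz, since $\|\mathcal{T}(z)\phi-\mathcal{T}(w)\phi\|_H\le|z-w|\,\|\phi'\|_H$. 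Therefore $B_1(z_n)\to B_1(\bar z)$ uniformly, and $B_1(z_n)u_n\rightharpoonup B_1(\bar z)\bar u$ weakly in $L^2$ (strong times weak convergence). The term $g_n^{-1}\alpha_n^\top(\cdots)u_n$ is handled the same way, because $g_n$ is uniformly bounded below and $g_n^{-1}\alpha_n\to\bar g^{-1}\bar\alpha$ uniformly. For the term $\alpha_n\dot z_n$, uniform convergence of $\alpha_n$ combined with weak convergence of $\dot z_n$ in $L^2$ gives weak convergence of the product. Consequently, the integrated equations pass to the limit for each $t$, and by the uniqueness part of Theorem~\ref{thrm:unique_solution}, $(\bar\alpha,\bar z)$ is the state associated with $\bar u$.

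\textbf{Step 4: weak lower semicontinuity.} The tracking term converges, since
\[
\bigl\|\mathcal{T}(z_n)\textstyle\sum_i\alpha_{n,i}\phi_i-\mathcal{T}(\bar z)\sum_i\bar\alpha_i\phi_i\bigr\|_H\to0
\]
uniformly in $t$ by uniform convergence of $(\alpha_n,z_n)$ and Lipschitz continuity of the shift. The control term $\tfrac{\mu}{2}\norm{u}^2$ is convex and continuous, hence weakly lower semicontinuous. Therefore $\mathcal{J}(\bar u)\le\liminf_n\mathcal{J}(u_n)=j$, so $\bar u$ is an optimal control.
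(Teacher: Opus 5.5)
Your proposal is correct and follows the paper's argument. Comparison with $u=0$ confines the relevant controls to the smallness region for large $\mu$; weak compactness in $L^2$ and $H^1$, together with the compact embedding into $C([0,T])$, gives convergent subsequences; strong-times-weak arguments plus uniqueness from Theorem~\ref{thrm:unique_solution} pass to the limit in the nonlinear state equation; and weak lower semicontinuity finishes the proof. The differences are minor: you pass to the limit in the explicit integrated form based on \eqref{eq:deriv_z} (using the uniform lower bound on $g_n$) instead of the paper's tested implicit mass-matrix form, and your margin $\delta$ makes explicit two points the paper leaves implicit, namely that the a priori constants (which depend on $\zeta^{-1}$) are uniform along the minimizing sequence and that the weak limit $\bar u$ still satisfies \eqref{eq:l2_bnd_u}.
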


\begin{proof}
 From Theorem \ref{thrm:unique_solution} we know that for $u= 0$ there exists a unique solution $(\alpha,z)$ to \eqref{eq_def:sPOD_galerkin_advection}. 
 From the a priori bounds \eqref{eq:a_priori_bnds} we conclude that $u= 0$ is feasible for \eqref{eq_def:sPOD_costFunc_continuous}. 
 Moreover, for sufficiently large $\mu>0$, we can restrict the set of admissible inputs $\mathcal{U}_{\mathrm{ad}}\subset L^2(0,T;U)$ to those satisfying the bound \eqref{eq:l2_bnd_u} since other inputs will lead to values of the cost function which are larger than for $u=0$.
 Thus, by Theorem~\ref{thrm:unique_solution} there exists a unique solution $(\alpha,z)$ to the initial value problem consisting of \eqref{eq_def:sPOD_galerkin_advection} and $\alpha(0)=\alpha_0$, $z(0)=z_0$ for any admissible input.
By abuse of notation, we use $(\alpha(u),z(u))$ in the following to denote the unique solution for a given admissible input $u$.
 
 Let $\{u_n\}\subset \mathcal{U}_{\mathrm{ad}}$ denote a minimizing sequence for \eqref{eq_def:sPOD_costFunc_continuous}, i.e., 
 \begin{equation*}
     \lim_{n\to\infty}\mathcal{J}_{\scaleto{\mathrm{sPOD-G}}{3pt}}(\alpha_n, z_n, u_n) = \inf_{u \in \mathcal{U}_{\mathrm{ad}}}\underbrace{\mathcal{J}_{\scaleto{\mathrm{sPOD-G}}{3pt}}(\alpha(u), z(u), u)}_{=\vcentcolon J(u)}
 \end{equation*}
with $(\alpha_n,z_n) \vcentcolon=(\alpha(u_n),z(u_n))$ bounded in $H^1(0,T;\mathbb R^{\tilde{\ell}})\times H^1(0,T)$.
    Subsequently, we get
    \begin{equation}\label{eq:weak_conv}
    \begin{aligned}
        u_n &\rightharpoonup \bar{u} \quad \text{in} \; L^2(0, T;U), \quad 
        \alpha_n \rightharpoonup \bar{\alpha} \quad \text{in} \; H^1(0, T;\mathbb{R}^{\tilde{\ell}}),\quad 
        z_n \rightharpoonup \bar{z} \quad \text{in} \; H^1(0, T)
    \end{aligned}
    \end{equation}
    for subsequences denoted in the same manner, see e.g.~Theorem~3 in \cite[App.~D.4]{Eva97}.
    In addition, since the mappings $x\mapsto x$ and $x\mapsto \dot{x}$ from $H^1(0,T)$ to $L^2(0,T)$ are continuous, we also get $\alpha_n \rightharpoonup \bar{\alpha}$, $\dot{\alpha}_n \rightharpoonup \dot{\bar{\alpha}}$ in $L^2(0, T;\mathbb{R}^{\tilde{\ell}})$ as well as $z_n \rightharpoonup \bar{z}$, $\dot{z}_n \rightharpoonup \dot{\bar{z}}$ in $L^2(0, T)$, cf.~\cite[Thm.~3.10]{Bre11}.
    From classical Sobolev embeddings, cf.~\cite[Thm.~8.8]{Bre11}, we obtain $H^1(0, T; \mathbb{R}^d){\hookrightarrow} L^\infty(0,T;\mathbb R^d)$ as well as 
    $H^1(0, T; \mathbb{R}^d)\overset{\mathrm{compact}}{\hookrightarrow} C([0, T];\mathbb{R}^d)$, which implies
    \begin{align}\label{eq:strong_conv}
            \alpha_n &\rightarrow \bar{\alpha} \quad \text{in} \; C([0, T];\mathbb{R}^{\tilde{\ell}}), \quad 
        z_n \rightarrow \bar{z} \quad \text{in} \; C([0, T]),
        \end{align}
        see e.g.~\cite[Thm.~8.1-7]{Kre78}.
        Let us show that $(\bar{\alpha},\bar{z})=(\alpha(\bar{u}),z(\bar{u})).$
        
    For arbitrary but fixed $q \in L^2(0, T;\mathbb{R}^{\tilde{\ell}}),$ the first equation in \eqref{eq_def:sPOD_galerkin_advection} implies
    \begin{equation*}
        \langle \dot{\alpha}_n, q\rangle_{L^2(0, T;\mathbb{R}^{\tilde{\ell}})} + \langle N \alpha_n \dot{z}_n, q\rangle_{L^2(0, T;\mathbb{R}^{\tilde{\ell}})} = \langle vN \alpha_n, q\rangle_{L^2(0, T;\mathbb{R}^{\tilde{\ell}})} + \langle [\langle \mathcal{T}(z_n)\phi_i, \mathcal{B}u_n\rangle_H]^{\tilde{\ell}}_{i=1}, q\rangle_{L^2(0, T;\mathbb{R}^{\tilde{\ell}})}.
    \end{equation*}
    From \eqref{eq:weak_conv}, we obtain $\langle \dot{\alpha}_n , q\rangle_{L^2(0, T;\mathbb{R}^{\tilde{\ell}})}\rightarrow \langle \dot{\bar{\alpha}}, q\rangle_{L^2(0, T;\mathbb{R}^{\tilde{\ell}})}$ as well as $\langle vN\alpha_n,q\rangle_{L^2(0, T;\mathbb{R}^{\tilde{\ell}})} \to \langle vN\bar{\alpha},q\rangle_{L^2(0, T;\mathbb{R}^{\tilde{\ell}})}$. For the second term on the left hand side, we first obtain
    \begin{align*}
        |\langle N \alpha_n \dot{z}_n, q\rangle_{L^2(0, T;\mathbb{R}^{\tilde{\ell}})} - \langle N \bar{\alpha} \dot{\bar{z}}, q\rangle_{L^2(0, T;\mathbb{R}^{\tilde{\ell}})}| & \leq |\langle N (\alpha_n - \bar{\alpha}) \dot{z}_n, q\rangle_{L^2(0, T;\mathbb{R}^{\tilde{\ell}})}| + |\langle N \bar{\alpha} (\dot{z}_n - \dot{\bar{z}}), q\rangle_{L^2(0, T;\mathbb{R}^{\tilde{\ell}})}|.
    \end{align*}
    From the Cauchy-Schwarz and generalized Hölder's inequality, cf.~\cite[Thm.~II.1.5]{Con07}, it follows that
        \begin{align*}
            |\langle N (\alpha_n - \bar{\alpha}) \dot{z}_n, q\rangle_{L^2(0, T;\mathbb{R}^{\tilde{\ell}})}| &\leq \| q^\top N(\alpha_n-\bar{\alpha})\|_{L^2(0,T;\mathbb R^{\tilde{\ell}})}  \| \dot{z}_n \|_{L^2(0,T)}  \\
            &\le \|N^\top q \|_{L^2(0,T;\mathbb R^{\tilde{\ell}})} \| \alpha_n-\bar{\alpha}\|_{L^\infty(0,T;\mathbb R^{\tilde{\ell}})} \| \dot{z}_n \|_{L^2(0,T)}
    \end{align*}
    which converges to $0$ as $\alpha_n\to \bar{\alpha}$ in $C([0,T];\mathbb R^{\tilde{\ell}})$ and $z_n$ is bounded in $H^1(0,T).$ Since $\bar{\alpha}\in H^1(0,T;\mathbb R^{\tilde{\ell}}) \hookrightarrow C([0, T];\mathbb{R}^{\tilde{\ell}})$ we conclude that $\bar{\alpha}^\top N^\top q \in L^2(0,T)$ and hence
    \begin{align*}
        \langle N \bar{\alpha} \dot{z}_n , q\rangle_{L^2(0, T;\mathbb{R}^{\tilde{\ell}})} = 
        \langle \dot{z}_n ,\bar{\alpha}^\top N ^\top q\rangle_{L^2(0, T)} \to 
        \langle \dot{\bar{z}} ,\bar{\alpha}^\top N ^\top q\rangle_{L^2(0, T)} = 
        \langle N \bar{\alpha} \dot{\bar{z}} , q\rangle_{L^2(0, T;\mathbb{R}^{\tilde{\ell}})}.
    \end{align*}
     For the controlled term in the first equation of \eqref{eq_def:sPOD_galerkin_advection}, let us first note that it holds  
     \begin{align*}
         &\langle [\langle \mathcal{T}(z_n)\phi_i, \mathcal{B}u_n\rangle_H]^{\tilde{\ell}}_{i=1}, q\rangle_{L^2(0, T;\mathbb{R}^{\tilde{\ell}})}-\langle [\langle \mathcal{T}(\bar{z})\phi_i, \mathcal{B}\bar{u}\rangle_H]^{\tilde{\ell}}_{i=1}, q\rangle_{L^2(0, T;\mathbb{R}^{\tilde{\ell}})}\\
         &= \sum_{i=1}^{\tilde{\ell}} \langle (\mathcal{T}(z_n)-\mathcal{T}(\bar{z}))\phi_i q_i, \mathcal{B}u_n \rangle_{L^2(0,T;H)} 
         + \langle \mathcal{T}(\bar{z})\phi_i q_i, \mathcal{B}(u_n-\bar{u}) \rangle_{L^2(0,T;H)}. 
     \end{align*}
     Using the boundedness of $\mathcal{B}u_n $ in $L^2(0,T;H)$, the fact that $q_i \in L^2(0,T)$ as well as   $\mathcal{T}(\cdot) \phi_i \in C^1(\mathbb{R}; H)$ and $z_n \rightarrow \bar{z}$ in $C([0, T])$, the first term converges to $0$. For the second term, convergence to $0$ follows from the fact that $\mathcal{T}(\bar{z})\phi_i q_i \in L^2(0,T;H)$ and the weak convergence of $u_n \rightharpoonup \bar{u}$ in  $L^2(0,T;U)$. Thus, $\bar{\alpha}$ and $\bar{z}$ indeed satisfy the first equation in $L^2(0,T;\mathbb R^{\tilde{\ell}}).$
     
    Recall that for arbitrary but fixed $p\in L^2(0,T)$, the triplet $(\alpha_n,z_n,u_n)$ satisfies
    \begin{equation*}
        \langle \alpha^\top_n N^\top \dot{\alpha}_n, p\rangle_{L^2(0, T)} + \langle \alpha^\top_n M_2 \alpha_n \dot{z}_n, p\rangle_{L^2(0, T)} = \langle v\alpha^\top_n M_2\alpha_n, p\rangle_{L^2(0, T)} + \langle \alpha_n^\top[\langle \mathcal{T}'(z_n)\phi_i, \mathcal{B}u_n\rangle_H]^{\tilde{\ell}}_{i=1}, p\rangle_{L^2(0, T)}.
    \end{equation*}
    We will show convergence only for the second and the fourth term in this equation as the other two follow with almost identical arguments. Starting from 
    \begin{align*}
        \langle \alpha_n^\top M_2 \alpha_n \dot{z}_n,p\rangle_{L^2(0,T)}-\langle \bar\alpha^\top M_2 \bar\alpha \dot{\bar{z}},p\rangle_{L^2(0,T)}  = \langle \dot{z}_n, p (\alpha_n^\top M_2 \alpha_n - \bar{\alpha}^\top M_2 \bar{\alpha}) \rangle _{L^2(0,T)} +  \langle \dot{z}_n- \dot{\bar{z}}, p \bar{\alpha}^\top M_2 \bar{\alpha}\rangle _{L^2(0,T)}
    \end{align*}
   with the Cauchy-Schwarz and generalized Hölder's inequalities we obtain 
    \begin{align*}
        \langle \dot{z}_n, p (\alpha_n^\top M_2 \alpha_n - \bar{\alpha}^\top M_2 \bar{\alpha}) \rangle _{L^2(0,T)} &\le \| \dot{z}_n \| _{L^2(0,T)}  \| p (\alpha_n^\top M_2 \alpha_n - \bar{\alpha}^\top M_2 \bar{\alpha}) \| _{L^2(0,T)} \\
        &\le \| \dot{z}_n \| _{L^2(0,T)}  \| p \|_{L^2(0,T)} \| \alpha_n^\top M_2 \alpha_n - \bar{\alpha}^\top M_2 \bar{\alpha} \| _{L^\infty(0,T)}.
    \end{align*}
    Next, we can use boundedness of $z_n$ in $H^1(0,T)$ and $\alpha_n\to \bar{\alpha}$ in  $C([0,T];\mathbb R^{\tilde{\ell}})$ to conclude convergence to $0$. For the term $ \langle \dot{z}_n- \dot{\bar{z}}, p \bar{\alpha}^\top M_2 \bar{\alpha}\rangle _{L^2(0,T)}$, we note that $p \bar{\alpha}^\top M_2 \bar{\alpha} \in L^2(0,T)$ by the generalized Hölder's inequality which implies convergence to $0$ since $z_n\rightharpoonup \bar{z}$ in $H^1(0,T).$ For the convergence of the controlled term, we utilize similar arguments as for the first equation. Let us start with the decomposition
    \begin{align*}
        &\langle \alpha_n^\top[\langle \mathcal{T}'(z_n)\phi_i, \mathcal{B}u_n\rangle_H]^{\tilde{\ell}}_{i=1}, p\rangle_{L^2(0, T)}-\langle \bar\alpha^\top[\langle \mathcal{T}'(\bar{z})\phi_i, \mathcal{B}\bar{u}\rangle_H]^{\tilde{\ell}}_{i=1}, p\rangle_{L^2(0, T)}\\
        &= \sum_{i=1}^{\tilde{\ell}} \langle p(\alpha_{n,i} \mathcal{T}'(z_n) -\bar{\alpha}_i \mathcal{T}'(\bar{z}) )\phi_i , \mathcal{B}u_n \rangle_{L^2(0,T;H)} + \sum_{i=1}^{\tilde{\ell}}
        \langle p\bar{\alpha}_i \mathcal{T}'(\bar{z}) \phi_i , \mathcal{B}(u_n-\bar{u}) \rangle_{L^2(0,T;H)}.
    \end{align*}
    Similar as before, convergence of the first sum follows from the boundedness of $\mathcal{B}u_n $ in $L^2(0,T;H)$, the fact that $p \in L^2(0,T)$ as well as   $\mathcal{T}'(\cdot) \phi_i \in C(\mathbb{R}; H)$, $z_n \rightarrow \bar{z}$ in $C([0, T])$ and $\alpha_n \rightarrow \bar{\alpha}$ in $C([0, T];\mathbb{R}^{\tilde{\ell}}).$ For the second sum, we employ the fact that $u_n \rightharpoonup \bar{u}$ in $L^2(0,T;U)$ and $p \bar{\alpha}_i \mathcal{T}'(\bar{z})\phi_i \in L^2(0,T;H)$ which is a consequence of $p \in L^2(0,T), \bar{\alpha}\in C([0,T];\mathbb R^{\tilde{\ell}}), \bar{z}\in C([0,T])$ as well as $\mathcal{T}'(\cdot) \phi_i \in C(\mathbb{R}; H).$ Altogether, we conclude that $(\bar{\alpha},\bar{z})=(\alpha(\bar{u}),z(\bar{u})).$
     Finally, due to the continuity of the first term in the cost function, the strong convergence of $\alpha_n$ and $z_n$ in $C([0,T];\mathbb{R}^{\tilde\ell})$ and $C([0,T])$, respectively, and the sequential weak lower semicontinuity of norms, cf.~\cite[Prop.~21.23(c)]{Zei90}, we arrive at
    \begin{equation*}
        \mathcal{J}_{\scaleto{\mathrm{sPOD-G}}{3pt}}(\bar{\alpha}, \bar{z}, \bar{u}) \leq \underset{n \rightarrow \infty}{\mathrm{lim \;inf}}\;\; \mathcal{J}_{\scaleto{\mathrm{sPOD-G}}{3pt}}(\alpha_n, z_n, u_n)
    \end{equation*}
    which concludes the proof.
\end{proof}

Combining Theorem \ref{thrm:unique_solution} and Theorem \ref{thrm:control_existence}, we can rely on standard optimality results from, e.g., \cite{hinze_optimization_2009} to finally derive the optimality system for the sPOD-G method.

\begin{thrm}\label{thm:first_order_cond}
    Let the assumptions of Theorem~\ref{thrm:control_existence} be satisfied, $(\bar{\alpha}, \bar{z}, \bar{u})$ be a local minimizer of the optimal control problem \eqref{eq_def:sPOD_costFunc_continuous}, and the modes $\phi_1,\ldots,\phi_{\tilde\ell}$ be in $H^2(0,l)$.
    Then, 
    there exists a unique adjoint $(\lambda^{\ell_a}, z^{\ell_a}) \in H^1(0, T, \mathbb{R}^{\tilde{\ell}}) \times H^1(0, T; \mathbb{R})$, such that the following first order optimality conditions hold
  \begin{subnumcases}{\mathrm{OC}_{\scaleto{\mathrm{sPOD-G}}{3pt}}:=} 
    \begin{bmatrix} I_{\tilde{\ell}} & N \bar{\alpha} \\ \bar{\alpha}^\top N^\top & \bar{\alpha}^\top M_2 \bar{\alpha}\end{bmatrix} \begin{bmatrix} \dot{\bar{\alpha}} \\ \dot{\bar{z}} \end{bmatrix} = v \begin{bmatrix} N & 0 \\ \bar{\alpha}^\top M_2 & 0 \end{bmatrix}
    \begin{bmatrix} \bar{\alpha} \\ \bar{z} \end{bmatrix} + \begin{bmatrix}  B_1(\bar{z})  \\ \bar{\alpha}^\top B_2(\bar{z})\end{bmatrix}\bar{u}, \label{eq_def:OC_FRTO_sPODG_1} \\
    \bar{\alpha}(0) = \alpha_0, \quad \quad \bar{z}(0) = 0, \label{eq_def:OC_FRTO_sPODG_2}\\[1em]
        \begin{bmatrix} 
            I_{\tilde{\ell}} & N\bar{\alpha} \\ 
            \bar{\alpha}^\top N^\top  & \bar{\alpha}^\top M_2 \bar{\alpha}
        \end{bmatrix} 
        \begin{bmatrix} 
            \dot{\lambda}^{\ell_a} \\ 
            \dot{z}^{\ell_a}
        \end{bmatrix} 
        = 
        \begin{bmatrix} 
            E_{11}(\dot{\bar{z}}) & E_{12}(\bar\alpha,\bar{z},\dot{\bar{z}},\bar{u}) \\ 
            E_{21}(\bar{z},\dot{\bar\alpha},\bar{u})  & E_{22}(\bar\alpha,\bar{z},\dot{\bar\alpha},\bar{u})
        \end{bmatrix}
        \begin{bmatrix} 
            \lambda^{\ell_a} \\ 
            z^{\ell_a}
        \end{bmatrix} 
        + 
        \begin{bmatrix}
            \bigg[\langle \mathcal{T}(\bar{z})\phi_j,  y_\mathrm{d}\rangle_H\bigg]^{\tilde{\ell}}_{j=1}-\bar\alpha \\ 
            \langle\sum^{\tilde{\ell}}_{j=1}\bar{\alpha}_j\mathcal{T}'(\bar{z})\phi_j , y_\mathrm{d}\rangle_H
        \end{bmatrix}
        , \label{eq_def:OC_FRTO_sPODG_3} \\[1em]
    \begin{bmatrix} I_{\tilde{\ell}}  & N \bar{\alpha}(T) \\ \bar{\alpha}(T)^\top N^\top & \bar{\alpha}(T)^\top M_2 \bar{\alpha}(T)\end{bmatrix} \begin{bmatrix} \lambda^{\ell_a} \\ z^{\ell_a} \end{bmatrix} =
    \begin{bmatrix} 0 \\ 0 \end{bmatrix} \label{eq_def:OC_FRTO_sPODG_4},\\[1.5em]
    \mu \bar{u}(t) + B_1(\bar{z}(t))^\top \lambda^{\ell_a}(t) + B_2(\bar{z}(t))^\top \bar{\alpha}(t) z^{\ell_a}(t) = 0,\label{eq_def:OC_FRTO_sPODG_5}
 \end{subnumcases}
 where $\alpha_0\in\mathbb{R}^{\tilde\ell}$, $E_{11}\colon \mathbb{R}\to\mathbb{R}^{\tilde\ell\times\tilde\ell}$, $E_{12}\colon \mathbb{R}^{\tilde\ell}\times\mathbb{R}\times\mathbb{R}\times \mathbb{R}^m\to\mathbb{R}^{\tilde\ell}$, $E_{21}\colon \mathbb{R}\times\mathbb{R}^{\tilde\ell}\times \mathbb{R}^m\to\mathbb{R}^{1\times\tilde\ell}$, and $E_{22}\colon \mathbb{R}^{\tilde\ell}\times\mathbb{R}\times\mathbb{R}^{\tilde\ell}\times \mathbb{R}^m\to\mathbb{R}$ are defined via
 \begin{equation}
    \label{eq:coefficients_of_adjoint_equation}
    \begin{aligned}
     \alpha_0 &\vcentcolon= [\langle \phi_j, y_0\rangle_H]^{\tilde{\ell}}_{j=1},\quad E_{11}(\dot{\bar{z}}) \vcentcolon= (\dot{\bar{z}}-v)N^\top,\quad E_{12}(\bar\alpha,\bar{z},\dot{\bar{z}},\bar{u}) \vcentcolon= 2 (\dot{\bar{z}}-v)M^\top_2 \bar{\alpha} - B_2(\bar{z})\bar{u},\\ 
     E_{21}(\bar{z},\dot{\bar\alpha},\bar{u}) &\vcentcolon= -\dot{\bar{\alpha}} N^\top - \bar{u}^\top B_2(\bar{z})^\top,\quad E_{22}(\bar\alpha,\bar{z},\dot{\bar\alpha},\bar{u}) \vcentcolon= -2\bar{\alpha}^\top M_2 \dot{\bar{\alpha}} - ([\langle \mathcal{T}''(\bar{z})\phi_i, \mathcal{B}\bar{u}\rangle_H]^{\tilde{\ell}}_{i=1})^\top \bar{\alpha}.
    \end{aligned}
 \end{equation}
\end{thrm}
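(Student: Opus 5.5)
We set the problem up in the abstract framework of \cite{hinze_optimization_2009}: a minimization over $w=(\alpha,z,u)$ subject to an equality constraint $e(w)=0$, and we obtain the adjoint from the Lagrange multiplier rule under a surjectivity condition. We take $Y:=H^1(0,T;\mathbb{R}^{\tilde\ell})\times H^1(0,T)$ as state space and $L^2(0,T;U)$ as control space. We define
\[
e\colon Y\times L^2(0,T;U)\to L^2(0,T;\mathbb{R}^{\tilde\ell+1})\times\mathbb{R}^{\tilde\ell+1}
\]
by $e(\alpha,z,u):=\big(M(\alpha)[\dot\alpha;\dot z]-v\,[N\alpha;\alpha^\top M_2\alpha]-[B_1(z)u;\alpha^\top B_2(z)u],\;(\alpha(0)-\alpha_0,z(0)-z_0)\big)$, where $M(\alpha)$ denotes the mass matrix in \eqref{eq_def:sPOD_galerkin_advection}. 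Since the proof of Theorem~\ref{thrm:control_existence} shows that admissible controls may be restricted to the set \eqref{eq:l2_bnd_u}, and this set is open in $L^2(0,T;U)$, the local minimizer $(\bar\alpha,\bar z,\bar u)$ is an interior point. The constraint $\mathcal{U}_{\mathrm{ad}}$ is therefore inactive, and the gradient condition holds with equality.

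\textbf{Step 1 (differentiability).} We verify that $e$ and $\mathcal{J}_{\mathrm{sPOD-G}}$ are continuously Fréchet differentiable. The only delicate terms are $z\mapsto B_1(z)u$, $z\mapsto B_2(z)u$ and $z\mapsto \mathcal{T}(z)\phi_i$ inside the cost. By Remark~\ref{rem:diff_T_vs_phi}, $\mathcal{T}''(z)\phi_i=\mathcal{T}(z)\phi_i''$, so $\phi_i\in H^2(0,l)$ gives $z\mapsto\mathcal{T}(z)\phi_i\in C^2(\mathbb{R};H)$ with bounded derivatives. Combined with $H^1(0,T)\hookrightarrow C([0,T])$, the Nemytskii maps $z\mapsto B_{1,2}(z)u$ are then $C^1$ from $H^1(0,T)\times L^2(0,T;U)$ into $L^2$. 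This is exactly where the extra $H^2$ assumption enters; it is also the origin of the $\mathcal{T}''$ term in $E_{22}$. The bilinear terms $M(\alpha)\dot{(\cdot)}$ are smooth because $H^1\hookrightarrow L^\infty$.

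\textbf{Step 2 (constraint qualification).} We show that $e_y(\bar\alpha,\bar z,\bar u)$ is bijective onto $L^2\times\mathbb{R}^{\tilde\ell+1}$. The linearization is the linear ODE
\[
M(\bar\alpha)[\dot\delta_\alpha;\dot\delta_z]+K(t)[\delta_\alpha;\delta_z]=f,
\]
with an initial condition and $K\in L^2(0,T;\mathbb{R}^{(\tilde\ell+1)\times(\tilde\ell+1)})$, which involves $\dot{\bar z},\dot{\bar\alpha},\bar u$. By \eqref{eq:linf_bnds_alpha}, $\bar\alpha$ stays uniformly away from zero, so $M(\bar\alpha(t))$ is uniformly invertible, as in the proof of Theorem~\ref{thrm:unique_solution}. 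The equation can thus be written explicitly with an $L^1$ (indeed $L^2$) coefficient matrix, and Carathéodory theory for linear systems gives a unique $H^1$ solution. The map is therefore bijective, and by the abstract theory a unique multiplier $p=(\lambda^{\ell_a},z^{\ell_a})$ exists satisfying $e_y^*p=\mathcal{J}_y$ and $\mu\bar u+e_u^*p=0$.

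\textbf{Step 3 (identification of the adjoint system).} We identify the abstract conditions with \eqref{eq_def:OC_FRTO_sPODG_3}--\eqref{eq_def:OC_FRTO_sPODG_5}. To do this, we pair $e_y\delta$ with $p$ in $L^2$ and integrate by parts in the terms $\langle M(\bar\alpha)\dot\delta,p\rangle$. This produces $-\langle\delta,\tfrac{d}{dt}(M(\bar\alpha)p)\rangle$ together with boundary terms. Demanding that the boundary term at $T$ vanish for all $\delta(T)$ gives \eqref{eq_def:OC_FRTO_sPODG_4}, while the terms at $0$ are absorbed by the initial-condition multiplier.

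Expanding $\tfrac{d}{dt}M(\bar\alpha)$ yields the $\dot{\bar\alpha}$ contributions. The derivatives of $N\alpha\dot z$ and $\alpha^\top M_2\alpha\dot z$ with respect to $\alpha$ give the $\dot{\bar z}N$ and $2\dot{\bar z}M_2\bar\alpha$ pieces. The $v$-terms combine with these into $(\dot{\bar z}-v)$ factors, using $N^\top=-N$. The $z$-derivatives of the control terms give $B_2\bar u$ and $\mathcal{T}''$. Collecting all of this yields the $E_{ij}$ in \eqref{eq:coefficients_of_adjoint_equation}.

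The cost gradient is obtained from $\mathcal{T}^*(\bar z)\mathcal{T}(\bar z)=I$ and orthonormality of the modes. Its $\alpha$-part is $\bar\alpha-[\langle\mathcal{T}(\bar z)\phi_j,y_\mathrm{d}\rangle]$. Its $z$-part is $-\langle\sum_j\bar\alpha_j\mathcal{T}'(\bar z)\phi_j,y_\mathrm{d}\rangle$, because $\langle \mathcal{T}'\phi,\mathcal{T}\psi\rangle$ terms are $z$-independent and vanish against the skew structure. Together these give the inhomogeneity in \eqref{eq_def:OC_FRTO_sPODG_3}. The regularity $p\in H^1$ follows from the adjoint equation, again because $M(\bar\alpha)$ is uniformly invertible. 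Finally, $e_u^*p$ gives \eqref{eq_def:OC_FRTO_sPODG_5}.

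I expect the main obstacle to be Step 3. The difficulty is the sign-careful bookkeeping of the time-dependent mass matrix after integration by parts, and in particular verifying that the $\tfrac{d}{dt}M(\bar\alpha)$ contributions combine exactly into the stated $E_{ij}$ together with the mass matrix on the left-hand side. A secondary difficulty is justifying the integration by parts when $M(\bar\alpha)p$ is merely in $W^{1,1}$; we handle this by first deriving the adjoint as a weak equation and then bootstrapping regularity.
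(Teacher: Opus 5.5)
Your route is the paper's: the same Lagrange framework with $w=(\alpha,z)\in H^1(0,T;\mathbb{R}^{\tilde\ell})\times H^1(0,T)$, bijectivity of $\partial_w e(\bar w,\bar u)$ from the uniformly invertible mass matrix, and integration by parts to obtain the adjoint equation and the terminal condition. The gradient equation then comes from an interior-point argument. Your Steps 1 and 2 are more explicit than the paper about differentiability and the $H^1$ bootstrap.

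\textbf{The gap is in the step you postponed.} The bookkeeping does \emph{not} produce the stated $E_{12}$. Write $\partial_w e(\bar w,\bar u)\tilde w=K\tilde w-M(\bar\alpha)\dot{\tilde w}$, where, exactly as in the paper, $K_{21}=\bar u^\top B_2(\bar z)^\top-\dot{\bar\alpha}^\top N-2(\dot{\bar z}-v)\bar\alpha^\top M_2$. The adjoint equation is then $M(\bar\alpha)\dot p=-\bigl(\tfrac{\mathrm{d}}{\mathrm{d}t}M(\bar\alpha)+K^\top\bigr)p+r$, where $p=(\lambda^{\ell_a},z^{\ell_a})$ and $r$ is the inhomogeneity in \eqref{eq_def:OC_FRTO_sPODG_3}. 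In the $(1,2)$ block, $-\tfrac{\mathrm{d}}{\mathrm{d}t}M_{12}=-N\dot{\bar\alpha}$, and $-K_{21}^\top$ contains $+N^\top\dot{\bar\alpha}$. Since $N^\top=-N$, these two terms add to $2N^\top\dot{\bar\alpha}$ instead of cancelling. You can confirm this directly: the first variation of $\int_0^T z^{\ell_a}\alpha^\top N^\top\dot\alpha\,\mathrm{d}t$ in direction $\delta$ is $\int_0^T\delta^\top\bigl(2z^{\ell_a}N^\top\dot\alpha-\dot z^{\ell_a}N\alpha\bigr)\,\mathrm{d}t$ plus boundary terms. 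The computation therefore gives
\[
E_{12}=2(\dot{\bar z}-v)M_2\bar\alpha-B_2(\bar z)\bar u+2N^\top\dot{\bar\alpha},
\]
while $E_{11},E_{21},E_{22}$ do agree with \eqref{eq:coefficients_of_adjoint_equation}. The extra term vanishes for $\bar u=0$, since then $\dot{\bar z}=v$ and $\dot{\bar\alpha}=0$, but not in general.

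The paper's proof only says the adjoint equations ``can be obtained'' and has the same omission. So this is a defect in the stated formula, not in your method. Still, your claim that the terms ``combine exactly into the stated $E_{ij}$'' cannot be carried out; what you can actually prove is the system with the corrected $E_{12}$.

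\textbf{Two smaller points.} First, your sign conventions are inconsistent. With your $e$ (left-hand side minus right-hand side), the multiplier rule reads $\mathcal{J}_w+e_w^*p=0$ and $\mathcal{J}_u+e_u^*p=0$. Your pair $e_y^*p=\mathcal{J}_y$, $\mu\bar u+e_u^*p=0$ mixes the two and would give \eqref{eq_def:OC_FRTO_sPODG_5} with the opposite sign. Adopt the paper's convention ($e$ equal to right-hand side minus left-hand side) or flip $p$.

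Second, the restriction to \eqref{eq:l2_bnd_u} in the proof of Theorem~\ref{thrm:control_existence} comes from comparing with the cost at $u=0$, which only applies to a global minimizer. For a general local minimizer you need $\bar u$ itself to satisfy \eqref{eq:l2_bnd_u} strictly. One can read this into the hypotheses, and the paper uses it tacitly. It is also exactly what keeps $\bar\alpha$ away from $0$, and hence what gives the uniform invertibility you rely on in Step 2.
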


\begin{proof}
In view of the results in Theorem \ref{thrm:unique_solution} and Theorem \ref{thrm:control_existence}, the statement follows with standard arguments as given in, e.g., \cite[Section 1.7.2]{hinze_optimization_2009} and we only summarize the most important steps. Denoting $w:=(\alpha,z)$, let us consider \eqref{eq_def:sPOD_costFunc_continuous} in the form
\begin{align*}
    \min\limits_{(w,u)\in \mathcal{W}\times \mathcal{U}} \mathcal{J}_{\scaleto{\mathrm{sPOD-G}}{3pt}}(w,u) \ \ \text{ subject to } \ \ e(w,u) = 0, \ \ u\in \mathcal{U}_{\mathrm{ad}},
\end{align*}
where $e\colon \mathcal{W}\times \mathcal{U}\to \mathcal{Z}$ and $e(w,u)=0$ represents \eqref{eq_def:OC_FRTO_sPODG_1}-\eqref{eq_def:OC_FRTO_sPODG_2}. In particular, let us consider 
$$
\mathcal{W}=H^1(0,T;\mathbb R^{\tilde{\ell}}) \times H^1(0,T), \ \ \mathcal{U}=L^2(0,T;U), \ \  \mathcal{Z}=L^2(0,T;\mathbb R^{\tilde{\ell}})\times L^2(0,T)\times\mathbb{R^{\tilde\ell}}\times\mathbb{R}.
$$
Since with Theorem~\ref{thrm:control_existence}, it follows that for $\mu $ sufficiently large, there exists an optimal pair $(\bar{w},\bar{u})\in \mathcal{W}\times \mathcal{U}$, we can define 
$$
\mathcal{U}_{\mathrm{ad}}=\left\{u \in L^2(0,T;U)\ | \ \|u\|_{L^2(0,T;U)} \le \|\bar{u}\|_{L^2(0,T;U)}+\varepsilon \right\}
$$
for some fixed $\varepsilon >0$. If $\mu$ is sufficiently large and $\varepsilon $ is sufficiently small, with Theorem \ref{thrm:unique_solution} it follows that for all $u\in \mathcal{U}_{\mathrm{ad}}$ there exists a unique solution $w(u)\in \mathcal{W}.$ 
Straightforward calculations yield that $\partial_w e(\bar{w}, \bar{u}) \tilde{w}$ is given as
\begin{equation*}
    \partial_w e(\bar{w}, \bar{u}) \tilde{w} = 
    \begin{bmatrix}
    \begin{bmatrix} 
        (v-\dot{\bar{z}})N & B_2(\bar{z})\bar{u} \\ 
        \bar{u}^\top B_2(\bar{z})^\top- \dot{\bar{\alpha}}^\top N - 2(\dot{\bar{z}}-v)\bar{\alpha}^\top M_2 & \bar{\alpha}^\top [\langle \mathcal{T}''(\bar{z})\phi_i, \mathcal{B}\bar{u}\rangle_H]^{\tilde{\ell}}_{i=1} 
    \end{bmatrix}
    \begin{bmatrix} 
        \tilde{\alpha} \\ \tilde{z} 
    \end{bmatrix}
    -
    \begin{bmatrix} 
        I_{\tilde{\ell}} & N \bar{\alpha} \\ 
        \bar{\alpha}^\top N^\top & \bar{\alpha}^\top M_2 \bar{\alpha} 
    \end{bmatrix}
    \begin{bmatrix} 
        \dot{\tilde{\alpha}} \\ 
        \dot{\tilde{z}} 
    \end{bmatrix}
    \\
    \begin{bmatrix}
        \tilde{\alpha}(0)\\
        \tilde{z}(0)
    \end{bmatrix}
    \end{bmatrix}
\end{equation*}
for $\tilde{w}=(\tilde{\alpha},\tilde{z})$.
As shown in the proof of Theorem \ref{thrm:unique_solution}, the mass matrix $\begin{bsmallmatrix}I_{\tilde{\ell}} & N \bar{\alpha} \\ 
        \bar{\alpha}^\top N^\top & \bar{\alpha}^\top M_2 \bar{\alpha} \end{bsmallmatrix}$ is invertible for all $t\in [0,T]$ with uniformly bounded inverse so that $\partial_w e(\bar{w}, \bar{u})\in \mathcal{L}(\mathcal{W},\mathcal{Z})$ has a bounded inverse, see e.g.~\cite[Thm.~4.20]{Zim21}. 
        Hence, there exists an adjoint state $\bar{p}=(\lambda^{\ell_a}, z^{\ell_a},\eta,\zeta) \in Z^*=L^2(0,T;\mathbb R^{\tilde{\ell}})\times L^2(0,T)\times\mathbb{R^{\tilde\ell}}\times\mathbb{R}$ such that 
\begin{align*}
    \partial_w e(\bar{w},\bar{u})^*\bar{p}=-\partial_w \mathcal{J}_{\scaleto{\mathrm{sPOD-G}}{3pt}}(\bar{w},\bar{u}).
\end{align*}
Using the expression for $\partial_w e(\bar{w}, \bar{u}) \tilde{w}$ and the cost functional in \eqref{eq_def:sPOD_costFunc_continuous}, we can obtain the adjoint equations as
\begin{equation*}
    \begin{bmatrix} 
        I_{\tilde{\ell}} & N\bar{\alpha} \\ 
        \bar{\alpha}^\top N^\top  & \bar{\alpha}^\top M_2 \bar{\alpha}
    \end{bmatrix} 
    \begin{bmatrix} 
        \dot{\lambda}^{\ell_a} \\ 
        \dot{z}^{\ell_a}
    \end{bmatrix} 
    = 
    \begin{bmatrix} 
        E_{11}(\dot{\bar{z}}) & E_{12}(\bar\alpha,\bar{z},\dot{\bar{z}},\bar{u}) \\ 
        E_{21}(\bar{z},\dot{\bar\alpha},\bar{u}) & E_{22}(\bar\alpha,\bar{z},\dot{\bar\alpha},\bar{u})
    \end{bmatrix}
    \begin{bmatrix} 
        \lambda^{\ell_a} \\ 
        z^{\ell_a}
    \end{bmatrix} 
    + 
        \begin{bmatrix}
            \bigg[\langle \mathcal{T}(\bar{z})\phi_j,  y_\mathrm{d}\rangle_H\bigg]^{\tilde{\ell}}_{j=1}-\bar\alpha \\ 
            \langle\sum^{\tilde{\ell}}_{j=1}\bar{\alpha}_j\mathcal{T}'(\bar{z})\phi_j , y_\mathrm{d}\rangle_H
        \end{bmatrix}
\end{equation*}
with $E_{11},E_{12},E_{21},E_{22}$ as defined in \eqref{eq:coefficients_of_adjoint_equation}.

Moreover, we have the final time condition
\begin{equation*}
    \begin{bmatrix} I_{\tilde{\ell}}  & N  \bar{\alpha}(T) \\ \bar{\alpha}(T)^\top N^\top & \bar{\alpha}(T)^\top M_{2} \bar{\alpha}(T)\end{bmatrix} \begin{bmatrix} \lambda^{\ell_a}(T) \\ z^{\ell_a}(T) \end{bmatrix} =
    \begin{bmatrix} 0 \\ 0 \end{bmatrix}
\end{equation*}
which, due to the invertibility of the mass matrix, implies $ \lambda^{\ell_a}(T)=0$ and $  z^{\ell_a}(T)=0$. 
In view of Remark \ref{rem:diff_T_vs_phi} and the assumption $\phi_i \in H^2(0,l),$ using similar arguments as in the proof of Theorem \ref{thrm:unique_solution}, one can show that $(\lambda^{\ell_a}, z^{\ell_a})$ is indeed in $H^1(0, T, \mathbb{R}^{\tilde{\ell}}) \times H^1(0, T; \mathbb{R})$. Finally, since $\bar{u}$ lies in the interior of $\mathcal{U}_{\mathrm{ad}},$ we also have that
\begin{align*}
    \partial_u\mathcal{J}_{\scaleto{\mathrm{sPOD-G}}{3pt}}(\bar{w},\bar{u})+\partial_u e(\bar{w},\bar{u})^*\bar{p}=0
\end{align*}
which translates into
\begin{equation*}
    \mu \bar{u} + B_1(\bar{z})^\top \lambda^{\ell_a} + B_2(\bar{z})^\top \bar{\alpha} z^{\ell_a} = 0.
\end{equation*}
\end{proof}

\section{Numerical results}\label{sec:results}
All numerical tests were run using Python 3.12 on a Macbook Air M1(2020) with an 8-core CPU and 16 GB of RAM.

\subsection{Problem setup}
For problem \eqref{eq_def:PDE_continuous}, we consider a one-dimensional strip of length $l=100$ with $x\in (0, l]$ discretized into $n=3201$ grid points with $\Delta x=0.03124$ and periodic boundary conditions. 
The discretized system is then simulated with the initial control $u= 0$ on $[0, T]$ with  $T=136.2642$, considering the $n_t = 2400$ time steps, $\Delta t= 0.0568$, and $v = 0.55$.
The solution obtained is shown in \Cref{fig:advection_and_target} along with the desired target profile for two different setups, where $Q$ and $Q_\mathrm{d}$ are snapshot matrices constructed by stacking the discrete state and the target state column-wise, respectively. 
The initial condition is given as:
\begin{equation}\label{eq_def:advection_init}
    y_0(x) := \mathrm{exp} \; \left(-\left(x - \frac{l}{12}\right)^2\right) \;.
\end{equation}
In our tests, we consider the control shape functions (first few shapes shown in \Cref{fig:Controls}) given as
\begin{equation}\label{eq_def:mask_sin_cos}
    b_1(x) = 1, \quad b_{2k}(x) = \mathrm{sin}\left(\frac{2\pi k x}{l}\right), \quad b_{2k + 1}(x) = -\mathrm{cos}\left(\frac{2\pi k x}{l}\right)
\end{equation}
for $k=1, \ldots, \xi$, where $\xi = 20$  and $m=2 \xi + 1 = 41$. 
\begin{figure}[htp!]
    \centering
    \begin{subfigure}[t]{0.45\textwidth}
        \centering
        \setlength\figureheight{0.7\linewidth}
        \setlength\figurewidth{0.7\linewidth}
\begin{tikzpicture}

\definecolor{darkgray176}{RGB}{176,176,176}

\begin{groupplot}[group style={group size=2 by 1, horizontal sep=1.2cm}]
\nextgroupplot[
colorbar,
colorbar style={ytick={-0.2,0,0.2,0.4,0.6,0.8,1,1.2},yticklabels={
  \(\displaystyle {\ensuremath{-}0.2}\),
  \(\displaystyle {0.0}\),
  \(\displaystyle {0.2}\),
  \(\displaystyle {0.4}\),
  \(\displaystyle {0.6}\),
  \(\displaystyle {0.8}\),
  \(\displaystyle {1.0}\),
  \(\displaystyle {1.2}\)
},ylabel={}, width=0.1*\pgfkeysvalueof{/pgfplots/parent axis width}, ticklabel style={font=\tiny}},
colormap={mymap}{[1pt]
  rgb(0pt)=(1,1,0.8);
  rgb(1pt)=(1,0.929411764705882,0.627450980392157);
  rgb(2pt)=(0.996078431372549,0.850980392156863,0.462745098039216);
  rgb(3pt)=(0.996078431372549,0.698039215686274,0.298039215686275);
  rgb(4pt)=(0.992156862745098,0.552941176470588,0.235294117647059);
  rgb(5pt)=(0.988235294117647,0.305882352941176,0.164705882352941);
  rgb(6pt)=(0.890196078431372,0.101960784313725,0.109803921568627);
  rgb(7pt)=(0.741176470588235,0,0.149019607843137);
  rgb(8pt)=(0.501960784313725,0,0.149019607843137)
},
height=1.2 * \figureheight,
hide x axis,
hide y axis,
point meta max=1.00000201032067,
point meta min=-1.56173253764891e-05,
tick align=outside,
tick pos=left,
title={\(\displaystyle Q\)},
width=0.7 * \figurewidth,
x grid style={darkgray176},
xmin=0, xmax=800,
xtick style={color=black},
xtick={0,200,400,600,800},
xticklabels={
  \(\displaystyle {0}\),
  \(\displaystyle {200}\),
  \(\displaystyle {400}\),
  \(\displaystyle {600}\),
  \(\displaystyle {800}\)
},
y grid style={darkgray176},
ymin=0, ymax=3360,
ytick style={color=black},
ytick={0,500,1000,1500,2000,2500,3000,3500},
yticklabels={
  \(\displaystyle {0}\),
  \(\displaystyle {500}\),
  \(\displaystyle {1000}\),
  \(\displaystyle {1500}\),
  \(\displaystyle {2000}\),
  \(\displaystyle {2500}\),
  \(\displaystyle {3000}\),
  \(\displaystyle {3500}\)
}
]
\addplot graphics [includegraphics cmd=\pgfimage,xmin=0, xmax=800, ymin=0, ymax=3360] {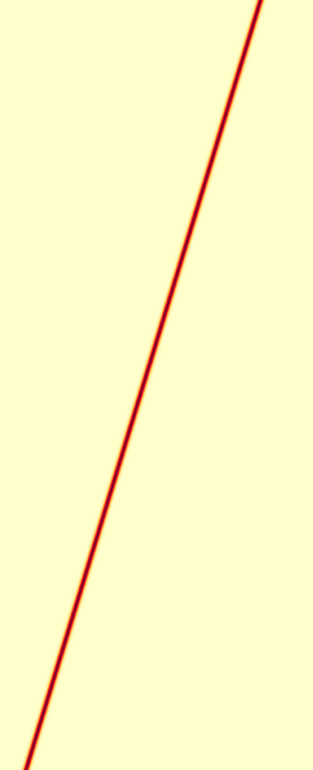};

\nextgroupplot[
colorbar,
colorbar style={ytick={-0.2,0,0.2,0.4,0.6,0.8,1,1.2},yticklabels={
  \(\displaystyle {\ensuremath{-}0.2}\),
  \(\displaystyle {0.0}\),
  \(\displaystyle {0.2}\),
  \(\displaystyle {0.4}\),
  \(\displaystyle {0.6}\),
  \(\displaystyle {0.8}\),
  \(\displaystyle {1.0}\),
  \(\displaystyle {1.2}\)
},ylabel={}, width=0.1*\pgfkeysvalueof{/pgfplots/parent axis width}, ticklabel style={font=\tiny}},
colormap={mymap}{[1pt]
  rgb(0pt)=(1,1,0.8);
  rgb(1pt)=(1,0.929411764705882,0.627450980392157);
  rgb(2pt)=(0.996078431372549,0.850980392156863,0.462745098039216);
  rgb(3pt)=(0.996078431372549,0.698039215686274,0.298039215686275);
  rgb(4pt)=(0.992156862745098,0.552941176470588,0.235294117647059);
  rgb(5pt)=(0.988235294117647,0.305882352941176,0.164705882352941);
  rgb(6pt)=(0.890196078431372,0.101960784313725,0.109803921568627);
  rgb(7pt)=(0.741176470588235,0,0.149019607843137);
  rgb(8pt)=(0.501960784313725,0,0.149019607843137)
},
height=1.2 * \figureheight,
hide x axis,
hide y axis,
point meta max=1.0000020161487,
point meta min=-1.56173253764891e-05,
tick align=outside,
tick pos=left,
title={\(\displaystyle Q_\mathrm{d}\)},
width=0.7 * \figurewidth,
x grid style={darkgray176},
xmin=0, xmax=800,
xtick style={color=black},
xtick={0,200,400,600,800},
xticklabels={
  \(\displaystyle {0}\),
  \(\displaystyle {200}\),
  \(\displaystyle {400}\),
  \(\displaystyle {600}\),
  \(\displaystyle {800}\)
},
y grid style={darkgray176},
ymin=0, ymax=3360,
ytick style={color=black},
ytick={0,500,1000,1500,2000,2500,3000,3500},
yticklabels={
  \(\displaystyle {0}\),
  \(\displaystyle {500}\),
  \(\displaystyle {1000}\),
  \(\displaystyle {1500}\),
  \(\displaystyle {2000}\),
  \(\displaystyle {2500}\),
  \(\displaystyle {3000}\),
  \(\displaystyle {3500}\)
}
]
\addplot graphics [includegraphics cmd=\pgfimage,xmin=0, xmax=800, ymin=0, ymax=3360] {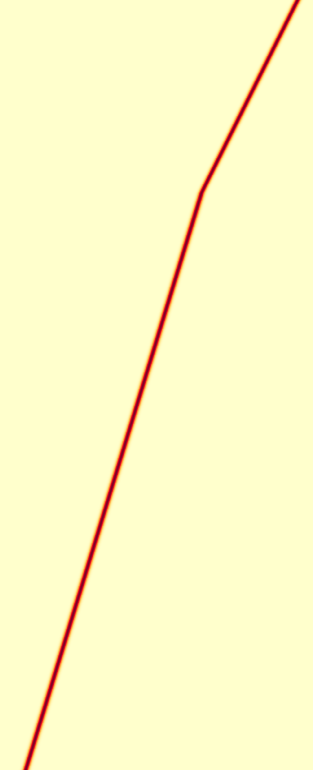};

\end{groupplot}

\draw ({$(current bounding box.south west)!-0.05!(current bounding box.south east)$}|-{$(current bounding box.south west)!0.3!(current bounding box.north west)$}) node[
  scale=0.96,
  anchor=west,
  text=black,
  rotate=90.0
]{time $t$};
\draw ({$(current bounding box.south west)!0.5!(current bounding box.south east)$}|-{$(current bounding box.south west)!-0.2!(current bounding box.north west)$}) node[
  scale=0.96,
  anchor=south,
  text=black,
  rotate=0.0
]{space $x$};
\end{tikzpicture} 
        \caption{Single tilt: Traveling wave with kink at $t=\frac{3}{4}T$.}
        \label{fig:Example_1}
    \end{subfigure}
    \hspace{0.05\textwidth}
    \begin{subfigure}[t]{0.45\textwidth}
        \centering
        \setlength\figureheight{0.7\linewidth}
        \setlength\figurewidth{0.7\linewidth}
\begin{tikzpicture}

\definecolor{darkgray176}{RGB}{176,176,176}

\begin{groupplot}[group style={group size=2 by 1, horizontal sep=1.2cm}]
\nextgroupplot[
colorbar,
colorbar style={ytick={-0.2,0,0.2,0.4,0.6,0.8,1,1.2},yticklabels={
  \(\displaystyle {\ensuremath{-}0.2}\),
  \(\displaystyle {0.0}\),
  \(\displaystyle {0.2}\),
  \(\displaystyle {0.4}\),
  \(\displaystyle {0.6}\),
  \(\displaystyle {0.8}\),
  \(\displaystyle {1.0}\),
  \(\displaystyle {1.2}\)
},ylabel={}, width=0.1*\pgfkeysvalueof{/pgfplots/parent axis width}, ticklabel style={font=\tiny}},
colormap={mymap}{[1pt]
  rgb(0pt)=(1,1,0.8);
  rgb(1pt)=(1,0.929411764705882,0.627450980392157);
  rgb(2pt)=(0.996078431372549,0.850980392156863,0.462745098039216);
  rgb(3pt)=(0.996078431372549,0.698039215686274,0.298039215686275);
  rgb(4pt)=(0.992156862745098,0.552941176470588,0.235294117647059);
  rgb(5pt)=(0.988235294117647,0.305882352941176,0.164705882352941);
  rgb(6pt)=(0.890196078431372,0.101960784313725,0.109803921568627);
  rgb(7pt)=(0.741176470588235,0,0.149019607843137);
  rgb(8pt)=(0.501960784313725,0,0.149019607843137)
},
height=1.2 * \figureheight,
hide x axis,
hide y axis,
point meta max=1.00000201032067,
point meta min=-1.56173253764891e-05,
tick align=outside,
tick pos=left,
title={\(\displaystyle Q\)},
width=0.7 * \figurewidth,
x grid style={darkgray176},
xmin=0, xmax=800,
xtick style={color=black},
xtick={0,200,400,600,800},
xticklabels={
  \(\displaystyle {0}\),
  \(\displaystyle {200}\),
  \(\displaystyle {400}\),
  \(\displaystyle {600}\),
  \(\displaystyle {800}\)
},
y grid style={darkgray176},
ymin=0, ymax=3360,
ytick style={color=black},
ytick={0,500,1000,1500,2000,2500,3000,3500},
yticklabels={
  \(\displaystyle {0}\),
  \(\displaystyle {500}\),
  \(\displaystyle {1000}\),
  \(\displaystyle {1500}\),
  \(\displaystyle {2000}\),
  \(\displaystyle {2500}\),
  \(\displaystyle {3000}\),
  \(\displaystyle {3500}\)
}
]
\addplot graphics [includegraphics cmd=\pgfimage,xmin=0, xmax=800, ymin=0, ymax=3360] {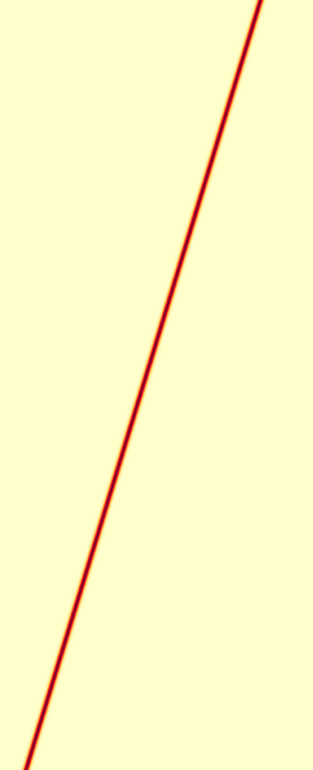};

\nextgroupplot[
colorbar,
colorbar style={ytick={-0.2,0,0.2,0.4,0.6,0.8,1,1.2},yticklabels={
  \(\displaystyle {\ensuremath{-}0.2}\),
  \(\displaystyle {0.0}\),
  \(\displaystyle {0.2}\),
  \(\displaystyle {0.4}\),
  \(\displaystyle {0.6}\),
  \(\displaystyle {0.8}\),
  \(\displaystyle {1.0}\),
  \(\displaystyle {1.2}\)
},ylabel={}, width=0.1*\pgfkeysvalueof{/pgfplots/parent axis width}, ticklabel style={font=\tiny}},
colormap={mymap}{[1pt]
  rgb(0pt)=(1,1,0.8);
  rgb(1pt)=(1,0.929411764705882,0.627450980392157);
  rgb(2pt)=(0.996078431372549,0.850980392156863,0.462745098039216);
  rgb(3pt)=(0.996078431372549,0.698039215686274,0.298039215686275);
  rgb(4pt)=(0.992156862745098,0.552941176470588,0.235294117647059);
  rgb(5pt)=(0.988235294117647,0.305882352941176,0.164705882352941);
  rgb(6pt)=(0.890196078431372,0.101960784313725,0.109803921568627);
  rgb(7pt)=(0.741176470588235,0,0.149019607843137);
  rgb(8pt)=(0.501960784313725,0,0.149019607843137)
},
height=1.2 * \figureheight,
hide x axis,
hide y axis,
point meta max=1.0000020161487,
point meta min=-1.56173253764891e-05,
tick align=outside,
tick pos=left,
title={\(\displaystyle Q_\mathrm{d}\)},
width=0.7 * \figurewidth,
x grid style={darkgray176},
xmin=0, xmax=800,
xtick style={color=black},
xtick={0,200,400,600,800},
xticklabels={
  \(\displaystyle {0}\),
  \(\displaystyle {200}\),
  \(\displaystyle {400}\),
  \(\displaystyle {600}\),
  \(\displaystyle {800}\)
},
y grid style={darkgray176},
ymin=0, ymax=3360,
ytick style={color=black},
ytick={0,500,1000,1500,2000,2500,3000,3500},
yticklabels={
  \(\displaystyle {0}\),
  \(\displaystyle {500}\),
  \(\displaystyle {1000}\),
  \(\displaystyle {1500}\),
  \(\displaystyle {2000}\),
  \(\displaystyle {2500}\),
  \(\displaystyle {3000}\),
  \(\displaystyle {3500}\)
}
]
\addplot graphics [includegraphics cmd=\pgfimage,xmin=0, xmax=800, ymin=0, ymax=3360] {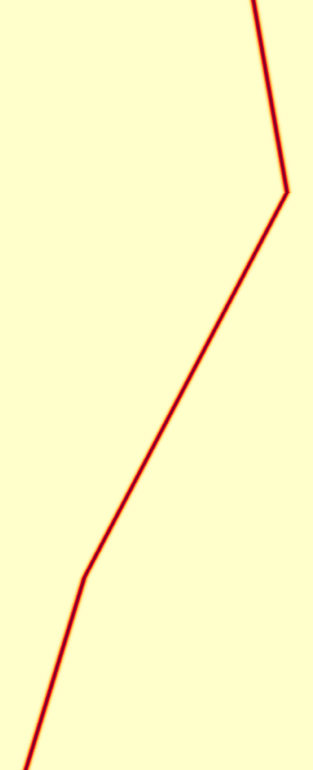};

\end{groupplot}

\draw ({$(current bounding box.south west)!-0.05!(current bounding box.south east)$}|-{$(current bounding box.south west)!0.3!(current bounding box.north west)$}) node[
  scale=0.96,
  anchor=west,
  text=black,
  rotate=90.0
]{time $t$};
\draw ({$(current bounding box.south west)!0.5!(current bounding box.south east)$}|-{$(current bounding box.south west)!-0.2!(current bounding box.north west)$}) node[
  scale=0.96,
  anchor=south,
  text=black,
  rotate=0.0
]{space $x$};
\end{tikzpicture} 
        \caption{Double tilt: Traveling wave with kinks at $t=\frac{1}{4}T$ and $t=\frac{3}{4}T$.}
        \label{fig:Example_4}
    \end{subfigure}
    \caption{Plots for the state and the target for the two example problems}
    \label{fig:advection_and_target}
\end{figure}
\begin{figure}[htp!]
    \centering
    \includegraphics[scale=0.42]{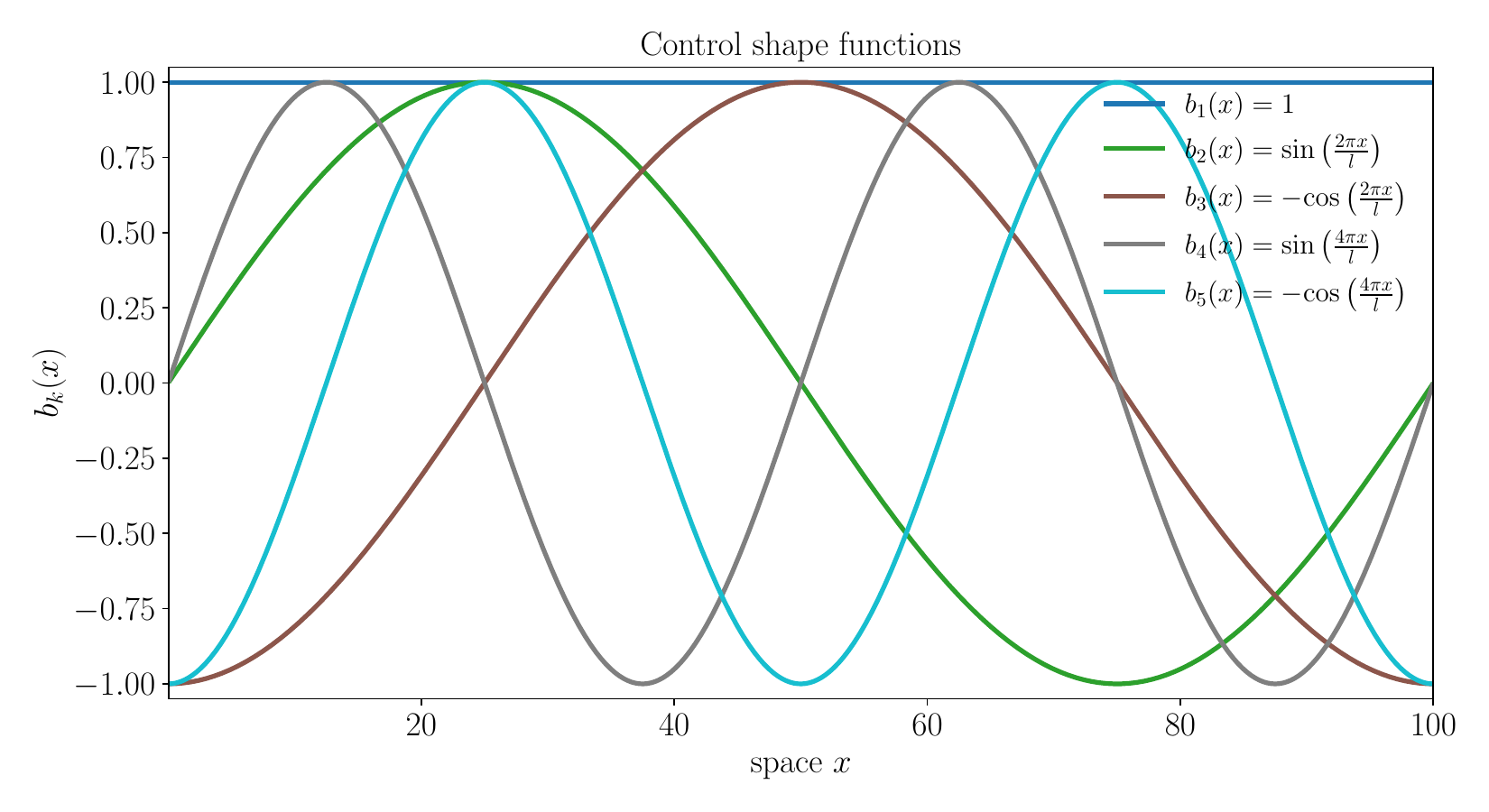}
    \caption{Control shape functions}
    \label{fig:Controls}
\end{figure}

The optimization parameters used for the example problems are given in \Cref{tab:params_1}, cf.~the upcoming \Cref{alg:FRTO_sPODG}.
\begin{table}[h!]
\begin{center}
\begin{minipage}{\textwidth}
\centering
\begin{tabular}{|l||c|c|c|c|c|c|} 
 \hline
 & $\mu$ & $\beta$ & $\omega^0$ & $n_{\mathrm{iter}}$ & $n_{\mathrm{samples}}$ \\
\hline 
Single tilt problem & $10^{-3}$ & $1\times 10^{-5}$ & $1$ & $20000$ & $800$ \\
\hline   
Double tilt problem & $10^{-3}$ & $1\times 10^{-5}$ & $1$ & $20000$ & $800$ \\
\hline  
\end{tabular}
\caption{Constant parameters}
\label{tab:params_1}
\end{minipage}
\end{center}
\end{table}

\subsection{Algorithmic details}
The algorithmic details for both the sPOD-G method and the POD-G method are shown in \Cref{alg:FRTO_sPODG} and \Cref{alg:FRTO_PODG}, respectively.
\begin{algorithm}  
  \caption{Optimal control with sPOD-G}\label{alg:FRTO_sPODG}
  \begin{algorithmic}[1]
  \Require{$y_0$, $y_\mathrm{d}$, $\tilde{\ell}$, $\mu$, $\omega^0$, $n_\mathrm{iter}$, $\beta$, $n_\mathrm{samples}$} 
    \State \textbf{Initialize:} $u=u_0$
      \For{$i = 1, \ldots, n_{\mathrm{iter}}$}
        \If{$\mathrm{refine}$} 
            \State $y= \textsc{State}(u^i,y_0)$ \Comment{Solve \eqref{eq_def:OC_FOM_1} and \eqref{eq_def:OC_FOM_2}}
            \State $\{\mathcal{T}(z)\phi_k\}^{\tilde{\ell}}_{k=1} = \textsc{Basis} ([y(t_1) \: \ldots \: y(t_{n_\mathrm{t}})], \tilde{\ell}, n_\mathrm{samples})$
        \EndIf
        \State $\alpha^i, z^i = \textsc{ReducedState}(\{\mathcal{T}(z)\phi_k\}^{\tilde{\ell}}_{k=1}, u^i,y_0)$ \Comment{Solve \eqref{eq_def:OC_FRTO_sPODG_1} and \eqref{eq_def:OC_FRTO_sPODG_2}}
        \State $\lambda^{\ell_a, i}, z^{\ell_a, i} = \textsc{ReducedAdjoint}(\{\mathcal{T}(z)\phi_k\}^{\tilde{\ell}}_{k=1}, u^i,y_\mathrm{d})$ \Comment{Solve \eqref{eq_def:OC_FRTO_sPODG_3} and \eqref{eq_def:OC_FRTO_sPODG_4}}
        \State $\tfrac{\mathrm{d}\mathcal{L}}{\mathrm{d}u^i}=\textsc{Gradient}(\lambda^{\ell_a, i}, z^{\ell_a, i}, \{\mathcal{T}(z)\phi_k\}^{\tilde{\ell}}_{k=1}, \alpha^i, u^i)$ \Comment{Solve \eqref{eq_def:OC_FRTO_sPODG_5}}
        \State $\omega^i = \textsc{StepSize}(\omega^{i- 1}, \tfrac{\mathrm{d}\mathcal{L}}{\mathrm{d} u^i}, \{\mathcal{T}(z)\phi_k\}^{\tilde{\ell}}_{k=1}, \alpha^i, u^{i})$
        \State $u^{i+1} = u^{i} - \omega^i\left(\tfrac{\mathrm{d}\mathcal{L}}{\mathrm{d} u^i}\right)$ 
        \If{$i=n_{\mathrm{iter}}$} 
            \State break
        \ElsIf{$\norm{\tfrac{\mathrm{d}\mathcal{L}}{\mathrm{d} u^i}} / \norm{\tfrac{\mathrm{d}\mathcal{L}}{\mathrm{d} u^1}} < \beta$}
            \State set $u=u^{i+1}$ and return
        \EndIf
      \EndFor
  \Ensure{$u$}
  \end{algorithmic}
\end{algorithm}
To solve both the state and adjoint FOM equations, we employ a first-order upwind scheme.
For time integration in the reduced-order setting, we use the first-order explicit Euler scheme for both the POD-G and sPOD-G reduced-state equations as well as for the corresponding linear reduced-adjoint equations. 
Upon solving the reduced state equation \eqref{eq_def:OC_FRTO_sPODG_1}, we observe that some terms depend on $\alpha$ and $z$. 
Since these values change at each time step, repeatedly constructing these terms could be time-consuming unless they are scaled with the reduced dimension. 
To address this, we pre-construct the terms dependent on the shift (specifically $B_1(z(t))$, $B_2(z(t))$, which scale with the FOM dimension) by sampling $n_{\mathrm{samples}}$ values of the shift from a sufficiently large sample space and then interpolating linearly to obtain their values during the time evolution. 
However, $\alpha$ is dynamically included during time evolution.
This approach is similar to the one used in \cite{black_efficient_2021}.
\begin{algorithm}  
  \caption{Optimal control with POD-G}\label{alg:FRTO_PODG}
  \begin{algorithmic}[1]
  \Require{$y_0$, $y_\mathrm{d}$, $\ell$, $\mu$, $\omega^0$, $n_\mathrm{iter}$, $\beta$} 
    \State \textbf{Initialize:} $u=u_0$
      \For{$i = 1, \ldots, n_{\mathrm{iter}}$}
        \If{$\mathrm{refine}$} 
            \State $y= \textsc{State}(u^i,y_0)$ \Comment{Solve \eqref{eq_def:OC_FOM_1} and \eqref{eq_def:OC_FOM_2}}
            \State $\{\phi_k\}^{\ell}_{k=1} = \textsc{Basis} ([y(t_1) \: \ldots \: y(t_{n_\mathrm{t}})], \ell)$
        \EndIf
        \State $\alpha^i = \textsc{ReducedState}(\{\phi_k\}^{\ell}_{k=1}, u^i,y_0)$ \Comment{Solve \eqref{eq_def:OC_FRTO_PODG_1} and \eqref{eq_def:OC_FRTO_PODG_2}}
        \State $\lambda^{\ell, i} = \textsc{ReducedAdjoint}(\{\phi_k\}^{\ell}_{k=1}, u^i, \alpha^i,y_\mathrm{d})$ \Comment{Solve \eqref{eq_def:OC_FRTO_PODG_3} and \eqref{eq_def:OC_FRTO_PODG_4}}
        \State $\tfrac{\mathrm{d}\mathcal{L}}{\mathrm{d}u^i}= \textsc{Gradient}(\lambda^{\ell, i}, u^i)$ \Comment{Solve \eqref{eq_def:OC_FRTO_PODG_5}}
        \State $\omega^i = \textsc{StepSize}(\omega^{i- 1}, \tfrac{\mathrm{d}\mathcal{L}}{\mathrm{d}u^i}, \{\phi_k\}^{\ell}_{k=1}, \alpha^i, u^{i})$
        \State $u^{i+1} = u^{i} - \omega^i\left(\tfrac{\mathrm{d}\mathcal{L}}{\mathrm{d}u^i}\right)$ 
        \If{$i=n_{\mathrm{iter}}$} 
            \State break
        \ElsIf{$\norm{\tfrac{\mathrm{d}\mathcal{L}}{\mathrm{d} u^i}} / \norm{\tfrac{\mathrm{d}\mathcal{L}}{\mathrm{d} u^1}} < \beta$}
            \State set $ u= u^{i+1}$ and return
        \EndIf
      \EndFor
  \Ensure{$u$}
  \end{algorithmic}
\end{algorithm}

The selection of the specific value for $n_\mathrm{iter}$ depends on the convergence criteria met for the FOM with the given $\beta$. 
Consequently, we terminate the optimal control loop for the techniques discussed either after reaching the prescribed $n_\mathrm{iter}$ or when the relative norm of the gradient $\norm{\tfrac{\mathrm{d}\mathcal{L}}{\mathrm{d} u^i}}$ falls below the specified $\beta$.

\begin{rmrk}
    For step size selection, we use a combination of the two-way backtracking method \cite{truong_backtracking_2021} and the Barzilai-Borwein method \cite{barzilai_two-point_1988}. 
    Whenever the relative norm of the Lagrangian gradient falls below $5 \times 10^{-3}$, we switch to the Barzilai-Borwein method to accelerate convergence. 
\end{rmrk}
As described in the algorithms, when using reduced-order models, one needs to input the number of truncated modes.
One could very well prescribe exactly the number of modes;
however, it is not necessary to maintain the same number of modes at each step to accurately represent the dynamics. 
To address this, we also implemented a tolerance-based strategy, dynamically selecting the number of truncated modes at each optimization step based on the criterion
\begin{equation}\label{eq_def:epsilon_criteria}
    d = \sum^{\min(n, n_\mathrm{t})}_{i=1} \mathds{1}\left(\frac{\sigma_i}{\sigma_1} > \mathrm{tol}\right)\,.
\end{equation}
Here, $\{\sigma_i\}$ are the singular values for the POD-G method and the sPOD-G method, $\mathrm{tol}$ is a tolerance selected in advance, $n$ is the dimension of the spatially semi-discretized FOM, and $\mathds{1}$ is a function mapping true statements to $1$ and wrong statements to $0$.

\subsection{Using ROMs in optimal control}
We begin by numerically verifying Proposition \ref{prop:max_sv_invariance} for the sPOD-G method for which we immediately refer to \Cref{fig:Theoretical_test_J}. 
\begin{figure}[htp!]
    \centering
    \begin{subfigure}[t]{0.48\textwidth}
        \centering
        \setlength\figureheight{0.8\linewidth}
        \setlength\figurewidth{0.9\linewidth}
        \begin{tikzpicture}

  \definecolor{brown}{RGB}{165,42,42}
  \definecolor{darkgray176}{RGB}{176,176,176}
  \definecolor{green}{RGB}{0,128,0}
  \definecolor{lightgray204}{RGB}{204,204,204}
  \definecolor{darkorange25512714}{RGB}{255,127,14}
  \definecolor{forestgreen4416044}{RGB}{44,160,44}
  \definecolor{sienna}{RGB}{160,82,45}
  \definecolor{steelblue31119180}{RGB}{31,119,180}
    \definecolor{crimson2143940}{RGB}{214,39,40}

  \begin{axis}[
    height=0.9*\figureheight,
    width=1.2*\figurewidth,
    log basis x={10},
    log basis y={10},
    xmin=5.01187233627271e-08,   xmax=66,
    ymin=0.91019163314602,      ymax=40,
    xlabel={$m + 1$},
    ylabel={$\scriptstyle{\mathcal{J}}$},
    xmajorgrids,
    ymajorgrids,
    x grid style={darkgray176},
    y grid style={darkgray176},
    xtick={5,10,15,20,25,30,35,40,45,50,55,60,65},
    xticklabels={
      \(\displaystyle 5\),
      \(\displaystyle 10\),
      \(\displaystyle 15\),
      \(\displaystyle 20\),
      \(\displaystyle 25\),
      \(\displaystyle 30\),
      \(\displaystyle 35\),
      \(\displaystyle 40\),
      \(\displaystyle 45\),
      \(\displaystyle 50\),
      \(\displaystyle 55\),
      \(\displaystyle 60\),
      \(\displaystyle 65\),
    },
    ytick={5,10,15,20,25,30,35,40},
    legend style={
    font=\footnotesize,
      fill opacity=0.5,
      draw opacity=0.5,
      text opacity=1,
      nodes={scale=0.6},
      draw=lightgray204,
      at={(0.95,0.95)}
    }
  ]

    \addplot+[
      semithick,
      brown,
      mark=o,
      mark size=3,
      mark options={solid},
      nodes near coords,
      every node near coord/.append style={
        font=\tiny,
        inner sep=0pt,
        outer sep=0pt,
        yshift=4pt
      },
      point meta=explicit symbolic
    ]
    table[
      row sep=\\,
      x=x,
      y=y,
      meta=label
    ] {
      x           y                    label \\
    4 37.9604 {} \\
    10 33.9152 {} \\
    16 26.8886 {} \\
    22 21.6719 {} \\
    28 17.0694 {} \\
    32 14.0068 {} \\
    36 11.4627 {} \\
    42 8.4991 {} \\
    52 4.6085 {} \\
    62 2.3750 {} \\
    };
    \addlegendentry{\small FOM}

    \addplot+[
      semithick,
      dashed,
      darkorange25512714,
      mark=asterisk,
      mark size=3,
      mark options={solid},
      nodes near coords,
      every node near coord/.append style={
        font=\tiny,
        inner sep=0pt,
        outer sep=0pt,
        yshift=2pt
      },
      point meta=explicit symbolic
    ]
    table[
      row sep=\\,
      x=x,
      y=y,
      meta=label
    ] {
      x           y                    label \\
4 37.96048684645486 {} \\
10 33.91539941308208 {} \\
16 26.89205832644983 {} \\
22 21.67474623447924 {} \\
28 17.07429066697089 {} \\
32 14.01435238524891 {} \\
36 11.473020191977726 {} \\
42 8.536209016822243 {} \\
52 4.639814287368501 {} \\
62 2.435288045153217 {} \\
    };
    \addlegendentry{\small sPOD-G ($\tilde{\ell}=m+1$)}

  \end{axis}
\end{tikzpicture}   
        \caption{Single tilt problem}
        \label{fig:Shifting}
    \end{subfigure}
    \hspace{0.01\textwidth}
    \begin{subfigure}[t]{0.48\textwidth}
        \centering
        \setlength\figureheight{0.8\linewidth}
        \setlength\figurewidth{0.9\linewidth}
        \begin{tikzpicture}

  \definecolor{brown}{RGB}{165,42,42}
  \definecolor{darkgray176}{RGB}{176,176,176}
  \definecolor{green}{RGB}{0,128,0}
  \definecolor{lightgray204}{RGB}{204,204,204}
  \definecolor{darkorange25512714}{RGB}{255,127,14}
  \definecolor{forestgreen4416044}{RGB}{44,160,44}
  \definecolor{sienna}{RGB}{160,82,45}
  \definecolor{steelblue31119180}{RGB}{31,119,180}
    \definecolor{crimson2143940}{RGB}{214,39,40}

  \begin{axis}[
    height=0.9*\figureheight,
    width=1.2*\figurewidth,
    log basis x={10},
    xmin=5.01187233627271e-08,   xmax=66,
    ymin=5,      ymax=125,
    xlabel={$m + 1$},
    ylabel={$\scriptstyle{\mathcal{J}}$},
    xmajorgrids,
    ymajorgrids,
    x grid style={darkgray176},
    y grid style={darkgray176},
    xtick={5,10,15,20,25,30,35,40,45,50,55,60,65},
    xticklabels={
      \(\displaystyle 5\),
      \(\displaystyle 10\),
      \(\displaystyle 15\),
      \(\displaystyle 20\),
      \(\displaystyle 25\),
      \(\displaystyle 30\),
      \(\displaystyle 35\),
      \(\displaystyle 40\),
      \(\displaystyle 45\),
      \(\displaystyle 50\),
      \(\displaystyle 55\),
      \(\displaystyle 60\),
      \(\displaystyle 65\),
    },
    ytick={10,20,30,40,50,60,70,80,90,100,110,120},
    legend style={
    font=\footnotesize,
      fill opacity=0.5,
      draw opacity=0.5,
      text opacity=1,
      nodes={scale=0.6},
      draw=lightgray204,
      at={(0.95,0.95)}
    }
  ]

    \addplot+[
      semithick,
      brown,
      mark=o,
      mark size=3,
      mark options={solid},
      nodes near coords,
      every node near coord/.append style={
        font=\tiny,
        inner sep=0pt,
        outer sep=0pt,
        yshift=4pt
      },
      point meta=explicit symbolic
    ]
    table[
      row sep=\\,
      x=x,
      y=y,
      meta=label
    ] {
      x           y                    label \\
    4 117.9264 {} \\
    10 98.3690 {} \\
    16 80.6874 {} \\
    22 64.7664 {} \\
    28 50.6654 {} \\
    32 42.2032 {} \\
    36 34.9219 {} \\
    42 25.4185 {} \\
    52 13.9992 {} \\
    62 7.1418 {} \\
    };
    \addlegendentry{\small FOM}

    \addplot+[
      semithick,
      dashed,
      darkorange25512714,
      mark=asterisk,
      mark size=3,
      mark options={solid},
      nodes near coords,
      every node near coord/.append style={
        font=\tiny,
        inner sep=0pt,
        outer sep=0pt,
        yshift=2pt
      },
      point meta=explicit symbolic
    ]
    table[
      row sep=\\,
      x=x,
      y=y,
      meta=label
    ] {
      x           y                    label \\
4 117.9264146135339 {} \\
10 98.36769306276594 {} \\
16 80.69084973993466 {} \\
22 64.77560632047127 {} \\
28 50.684944541746454 {} \\
32 42.233024014361355 {} \\
36 34.81716169559843 {} \\
42 25.482225893792684 {} \\
52 14.097682632085744 {} \\
62 7.238654527931361 {} \\
    };
    \addlegendentry{\small sPOD-G ($\tilde{\ell}=m+1$)}

  \end{axis}
\end{tikzpicture}   
        \caption{Double tilt problem}
        \label{fig:Shifting_3}
    \end{subfigure}
    \caption{Number of controls vs. $\mathcal{J}$}
    \label{fig:Theoretical_test_J}
\end{figure}
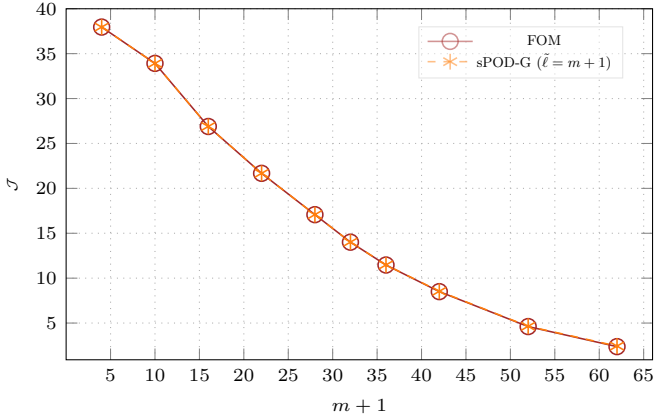
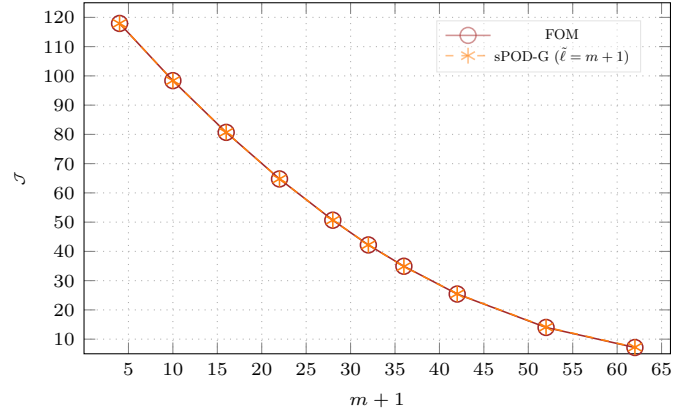
The FOM optimal control problem is solved for different number of control basis functions $b_k$ and consequently, for varying number of controls. 
The converged cost functional values are reported in the plots. 
For the sPOD-G method we use the same number of controls.
The stationary basis $\{\phi_i\}^{\tilde{\ell}}_{i=1}$ is built from pairs of complex conjugate eigenfunctions of the operator $\mathcal{A}=-v\tfrac{\mathrm{d}}{\mathrm{d}x}$.
In practice, we collect these eigenfunctions along with the initial condition $y_0$ and extract an orthogonal basis of size $\tilde{\ell} = m + 1$ using an SVD.
This basis is then used as the stationary basis for the sPOD-G method.
Since $\mathrm{span}\{\phi_i\}^{\tilde{\ell}}_{i=1} = \mathrm{span}\{y_0, b_1, \ldots, b_m\}$, no basis updates are required during the intermediate optimization steps. 
From \Cref{fig:Theoretical_test_J}, we see that the costs obtained by the sPOD-G method match the cost functional values of the FOM. 
Moreover, \Cref{fig:theoretical_svd} demonstrates that $m + 1$ stationary modes suffice to approximate the advection snapshots in the stationary frame where $\frac{\sigma_{m + 2}}{\sigma_0} \ll \frac{\sigma_{m + 1}}{\sigma_0}$ for all optimization steps, which numerically confirms Proposition \ref{prop:max_sv_invariance}.
\begin{figure}[htp!]
    \centering
        \begin{subfigure}[t]{0.48\textwidth}
        \centering
        \setlength\figureheight{0.8\linewidth}
        \setlength\figurewidth{0.9\linewidth}
\setlength{\figurewidth}{8.0cm}
\setlength{\figureheight}{5.8cm}

\begin{tikzpicture}

  \definecolor{brown}{RGB}{165,42,42}
  \definecolor{darkgray176}{RGB}{176,176,176}
  \definecolor{green}{RGB}{0,128,0}
  \definecolor{lightgray204}{RGB}{204,204,204}

\definecolor{crimson2143940}{RGB}{214,39,40}
  \definecolor{darkorange25512714}{RGB}{255,127,14}
  \definecolor{forestgreen4416044}{RGB}{44,160,44}
  \definecolor{sienna}{RGB}{160,82,45}
  \definecolor{steelblue31119180}{RGB}{31,119,180}

\begin{axis}[
height=\figureheight,
legend cell align={left},
legend style={
  fill opacity=0.8,
  draw opacity=1,
  text opacity=1,
  at={(0.97,0.5)},
  anchor=east,
  draw=lightgray204
},
log basis y={10},
tick align=outside,
tick pos=left,
width=\figurewidth,
x grid style={darkgray176},
xlabel={\(\displaystyle n_{\mathrm{iter}}\)},
xmajorgrids,
xmin=-37, xmax=777,
xtick style={color=black},
xtick={-100,0,100,200,300,400,500,600,700,800},
xticklabels={
  \(\displaystyle {\ensuremath{-}100}\),
  \(\displaystyle {0}\),
  \(\displaystyle {100}\),
  \(\displaystyle {200}\),
  \(\displaystyle {300}\),
  \(\displaystyle {400}\),
  \(\displaystyle {500}\),
  \(\displaystyle {600}\),
  \(\displaystyle {700}\),
  \(\displaystyle {800}\),
},
y grid style={darkgray176},
ylabel={\(\displaystyle \frac{\sigma}{\sigma_0}\)},
ymajorgrids,
ymin=3.62262716732833e-17, ymax=1.79004450743714e-07,
ymode=log,
ytick style={color=black},
ytick={1e-19,1e-17,1e-15,1e-13,1e-11,1e-09,1e-07,1e-05,0.001},
yticklabels={
  \(\displaystyle {10^{-19}}\),
  \(\displaystyle {10^{-17}}\),
  \(\displaystyle {10^{-15}}\),
  \(\displaystyle {10^{-13}}\),
  \(\displaystyle {10^{-11}}\),
  \(\displaystyle {10^{-9}}\),
  \(\displaystyle {10^{-7}}\),
  \(\displaystyle {10^{-5}}\),
  \(\displaystyle {10^{-3}}\)
}
]
\addplot [thick, steelblue31119180, mark=o, mark size=2, mark options={solid}]
table {%
0 9.99200722162641e-17
10 1.70333653683882e-13
20 2.44268967830028e-11
30 3.75143071541e-11
40 4.53037025527791e-11
50 5.16630527730257e-11
60 5.85633065945574e-11
70 6.54086147001628e-11
80 7.22013480079471e-11
90 7.83980748418958e-11
100 8.50964467514687e-11
110 9.17475290184619e-11
120 9.83527716900226e-11
130 1.04913454719746e-10
140 1.11430759255408e-10
150 1.17905749122886e-10
160 1.24339393973165e-10
170 1.30732594501543e-10
180 1.36545504547029e-10
190 1.42863251554831e-10
200 1.49142850807436e-10
210 1.55384980405298e-10
220 1.61590284678996e-10
230 1.67759365229382e-10
240 3.87603612757024e-10
250 2.28007651224416e-10
260 7.63711915072307e-09
270 5.91222904367374e-10
280 1.08710437394477e-09
290 4.04552754010283e-08
300 1.02025104791951e-09
310 1.30254218898767e-09
320 4.36414412327032e-08
330 4.33317209040658e-08
340 1.12965938417802e-09
350 7.87061783746817e-09
360 4.22266036627198e-08
370 1.67095462733661e-08
380 5.9518134893366e-08
390 5.68281427656839e-08
400 6.52332639659451e-09
410 3.40533409543263e-09
420 4.06064940256422e-09
430 1.13521464576378e-08
440 2.96436742414412e-08
450 5.74645221129223e-08
460 2.64321775343109e-08
470 5.98027725682064e-08
480 5.62152712495326e-09
490 5.86345429873986e-08
500 1.38681553858179e-08
510 1.84680316008841e-09
520 2.71483344031124e-08
530 5.87365406439284e-08
540 5.93458811516398e-08
550 6.10980341279651e-08
560 6.08497088422595e-08
570 6.15553309202405e-08
580 3.28738933261525e-09
590 2.34449074363477e-08
600 6.48985105748665e-08
610 8.62084215580537e-09
620 5.89857468495342e-08
630 6.15600240998287e-08
640 1.28727359638462e-08
650 2.42093003407842e-08
660 2.85028898856469e-08
670 5.97218649367929e-08
680 1.81961523889529e-09
690 6.14716536727352e-08
700 1.82815356690149e-09
710 1.08246741667116e-08
720 1.82534595308014e-09
730 6.26204164551421e-08
740 6.20936060949777e-08
};
\addlegendentry{$\sigma_{m + 1}$}
\addplot [thick, darkorange25512714, mark=star, mark size=2, mark options={solid}]
table {%
0 9.99200722162641e-17
10 2.56678419519211e-16
20 2.15144622038993e-16
30 2.6622937392645e-16
40 2.3135598630305e-16
50 2.82788747280845e-16
60 2.57806729579161e-16
70 2.4656768186682e-16
80 2.06644543465998e-16
90 2.80345568121697e-16
100 2.65795834179983e-16
110 2.70085853723976e-16
120 2.79191404458554e-16
130 3.04875438986229e-16
140 2.15391906187805e-16
150 3.07025103254423e-16
160 2.88302435080301e-16
170 2.19868086236517e-16
180 3.59435719741242e-16
190 2.74411543402069e-16
200 3.13431061872554e-16
210 1.80238434656465e-16
220 2.86908105230817e-16
230 3.37422705070845e-16
240 2.50760120507003e-16
250 2.65890207251871e-16
260 1.91322777114131e-16
270 3.33386160975788e-16
280 1.91592891496612e-16
290 1.21428768169841e-16
300 1.03551711329501e-15
310 7.57330770613529e-16
320 9.99200722162641e-17
330 1.79125122884955e-16
340 2.95314457262573e-16
350 1.35069240286373e-16
360 1.16765223401972e-16
370 1.93966327194578e-16
380 1.01895192760309e-16
390 4.40297256200773e-16
400 9.73159892075298e-16
410 9.99200722162641e-17
420 2.12848426496476e-16
430 1.69769003016568e-16
440 7.28298276110773e-16
450 4.2928958347675e-16
460 1.33634922309267e-16
470 2.01753962839927e-16
480 2.24519777878903e-16
490 2.10475177499481e-16
500 1.06862568152988e-15
510 3.3204889514957e-16
520 2.11388214187526e-16
530 6.016571543186e-16
540 1.6960253617986e-16
550 2.12193764324547e-16
560 1.04094520486733e-16
570 2.2450785607603e-16
580 6.72068414235304e-16
590 9.45102964045616e-16
600 2.56585687084047e-16
610 4.85533517853932e-16
620 2.33241487516444e-16
630 1.37546338698241e-16
640 1.80650486970158e-16
650 4.96007497452196e-16
660 5.54269553686235e-16
670 8.85492703848259e-16
680 3.27551300348113e-16
690 1.9408914447456e-16
700 2.64418036936607e-16
710 1.93931163795462e-16
720 9.30059349420571e-16
730 1.02173017759668e-16
740 2.69204399486161e-16
};
\addlegendentry{$\sigma_{m + 2}$}
\end{axis}

\end{tikzpicture}   
        \caption{Single tilt problem}
        \label{fig:Shifting_svd}
    \end{subfigure}
    \hspace{0.01\textwidth}
    \begin{subfigure}[t]{0.48\textwidth}
        \centering
        \setlength\figureheight{0.8\linewidth}
        \setlength\figurewidth{0.9\linewidth}
\setlength{\figurewidth}{8.0cm}
\setlength{\figureheight}{5.8cm}

\begin{tikzpicture}

  \definecolor{brown}{RGB}{165,42,42}
  \definecolor{darkgray176}{RGB}{176,176,176}
  \definecolor{green}{RGB}{0,128,0}
  \definecolor{lightgray204}{RGB}{204,204,204}

\definecolor{crimson2143940}{RGB}{214,39,40}
  \definecolor{darkorange25512714}{RGB}{255,127,14}
  \definecolor{forestgreen4416044}{RGB}{44,160,44}
  \definecolor{sienna}{RGB}{160,82,45}
  \definecolor{steelblue31119180}{RGB}{31,119,180}

\begin{axis}[
height=\figureheight,
legend cell align={left},
legend style={
  fill opacity=0.8,
  draw opacity=1,
  text opacity=1,
  at={(0.97,0.5)},
  anchor=east,
  draw=lightgray204
},
log basis y={10},
tick align=outside,
tick pos=left,
width=\figurewidth,
x grid style={darkgray176},
xlabel={\(\displaystyle n_{\mathrm{iter}}\)},
xmajorgrids,
xmin=-38.5, xmax=808.5,
xtick style={color=black},
xtick={-100,0,100,200,300,400,500,600,700,800},
xticklabels={
  \(\displaystyle {\ensuremath{-}100}\),
  \(\displaystyle {0}\),
  \(\displaystyle {100}\),
  \(\displaystyle {200}\),
  \(\displaystyle {300}\),
  \(\displaystyle {400}\),
  \(\displaystyle {500}\),
  \(\displaystyle {600}\),
  \(\displaystyle {700}\),
  \(\displaystyle {800}\),
},
y grid style={darkgray176},
ylabel={\(\displaystyle \frac{\sigma}{\sigma_0}\)},
ymajorgrids,
ymin=2.60784803804089e-17, ymax=0.000178005065871147,
ymode=log,
ytick style={color=black},
ytick={1e-19,1e-17,1e-15,1e-13,1e-11,1e-09,1e-07,1e-05,0.001,0.1},
yticklabels={
  \(\displaystyle {10^{-19}}\),
  \(\displaystyle {10^{-17}}\),
  \(\displaystyle {10^{-15}}\),
  \(\displaystyle {10^{-13}}\),
  \(\displaystyle {10^{-11}}\),
  \(\displaystyle {10^{-9}}\),
  \(\displaystyle {10^{-7}}\),
  \(\displaystyle {10^{-5}}\),
  \(\displaystyle {10^{-3}}\),
  \(\displaystyle {10^{-1}}\)
}
]
\addplot [thick, steelblue31119180, mark=o, mark size=2, mark options={solid}]
table {%
0 9.99200722162641e-17
10 1.60346039962445e-13
20 8.28711697297686e-09
30 3.72900087623297e-07
40 4.39095247993445e-07
50 5.03190331872416e-07
60 5.65342391938328e-07
70 6.25610786022422e-07
80 6.79801762084314e-07
90 7.36673711071958e-07
100 7.91849726391036e-07
110 8.45397181950803e-07
120 8.97382620973305e-07
130 9.47871037625984e-07
140 9.96925361963321e-07
150 1.04460611311337e-06
160 1.09097118710866e-06
170 1.13607574991697e-06
180 1.17997221076343e-06
190 1.22271025487383e-06
200 1.26433691854813e-06
210 1.30489669303329e-06
220 1.34443164636866e-06
230 1.38298155494522e-06
240 1.41674711121721e-06
250 1.45349184703947e-06
260 1.48935528775556e-06
270 1.52436944288593e-06
280 1.55856466535586e-06
290 1.59196976214402e-06
300 1.62461209724328e-06
310 1.65651768689598e-06
320 1.6877112875175e-06
330 1.62148344448297e-07
340 3.0642506239455e-06
350 1.54003566093638e-06
360 4.64581491483197e-05
370 4.81947368331152e-06
380 3.88583626049871e-06
390 3.59343626432029e-06
400 3.89295399427763e-06
410 8.54397918195617e-06
420 2.21945037606882e-06
430 3.31930872235861e-06
440 3.74142464636187e-06
450 5.5564776479813e-07
460 4.30054464711789e-06
470 1.76286633269658e-08
480 6.54564551618791e-07
490 4.78791647859536e-07
500 4.36934204147093e-06
510 3.28438645476044e-07
520 4.09220950504273e-06
530 6.36633815767751e-06
540 4.72323439109407e-06
550 4.74888456468047e-06
560 4.84383717604136e-06
570 7.11799809208523e-08
580 2.20879340040419e-06
590 4.92991353363196e-06
600 4.56288861596069e-06
610 4.73978046286161e-07
620 5.26579577042587e-06
630 4.76693760236291e-06
640 5.32428203008982e-06
650 4.60025736333593e-06
660 4.78305625923536e-06
670 4.80206744556023e-06
680 1.70171798797408e-07
690 3.12281032173752e-07
700 4.43328900380561e-06
710 1.06656182020292e-05
720 2.74789003883143e-08
730 3.96141481925031e-06
740 8.27383366403276e-09
750 3.88230164975777e-07
760 1.61560013976527e-07
770 3.95229743429288e-06
};
\addlegendentry{$\sigma_{m + 1}$}
\addplot [thick, darkorange25512714, mark=star, mark size=2, mark options={solid}]
table {%
0 9.99200722162641e-17
10 9.99200722162641e-17
20 9.99200722162641e-17
30 9.99200722162641e-17
40 1.07005084671495e-16
50 1.29426497981571e-16
60 1.22584842399451e-16
70 1.51614807667493e-16
80 1.95757657572787e-16
90 2.36061902923302e-16
100 1.81803347627786e-16
110 1.67268036339466e-16
120 2.94543228109169e-16
130 1.84947575375958e-16
140 2.13116182905947e-16
150 1.58214416720673e-16
160 2.17409975446338e-16
170 2.80663697125301e-16
180 1.68906267148433e-16
190 2.42754052681377e-16
200 1.99210506919489e-16
210 2.09658513173049e-16
220 1.86370774077922e-16
230 1.5326439518468e-16
240 2.74054162923418e-16
250 2.88531376887937e-16
260 2.60134918755464e-16
270 2.06395108817718e-16
280 2.41112633482181e-16
290 1.22611339302681e-16
300 1.880846384849e-16
310 3.66734528678467e-16
320 2.44987794812129e-16
330 3.60622228163053e-16
340 2.10632287406905e-16
350 1.08203211842028e-16
360 1.10530994635682e-16
370 4.75561343057731e-16
380 2.72816589649069e-16
390 5.46959478001806e-16
400 4.39043538621999e-16
410 1.27120772406716e-16
420 5.24671618150554e-16
430 6.02245492504666e-16
440 2.51418617423962e-16
450 3.39041379952543e-16
460 1.91390859938799e-16
470 5.61807685695023e-16
480 2.59149350862477e-16
490 5.51770736800005e-16
500 1.50720905855618e-16
510 5.92888580725273e-16
520 2.19619652827719e-16
530 7.91245380017356e-16
540 4.71779947159855e-16
550 4.40938309284592e-16
560 4.51670219798104e-16
570 1.57445505735777e-16
580 6.60275524191864e-16
590 3.44758197720151e-16
600 2.16493076494197e-16
610 4.67869390089969e-16
620 2.63023518678705e-16
630 5.64324003206017e-16
640 2.96593136281584e-16
650 2.55000975854632e-16
660 2.95758815345855e-16
670 4.92333111124127e-16
680 2.46497995863685e-16
690 2.51689501082646e-16
700 3.24700322145618e-16
710 1.89184843875519e-16
720 6.17604851123365e-16
730 3.24629263798474e-16
740 9.83446496145453e-16
750 9.46782593161539e-16
760 6.57197895164848e-16
770 3.00790944958525e-16
};
\addlegendentry{$\sigma_{m + 2}$}
\end{axis}

\end{tikzpicture}   
        \caption{Double tilt problem}
        \label{fig:Shifting_3_svd}
    \end{subfigure}
    \caption{Plots showing singular value trend over optimization steps for $m = 9$}
    \label{fig:theoretical_svd}
\end{figure}
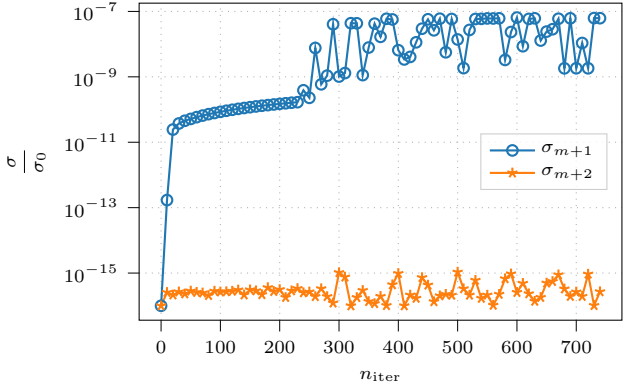
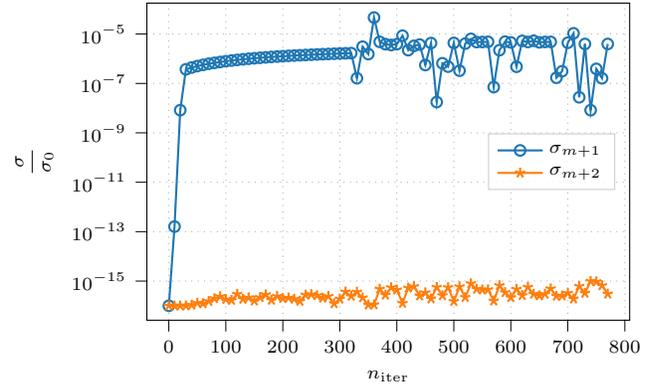

We next consider a standard configuration in which the basis is formed directly from the state snapshots.
For comparison, we fix $m=41$ controls. 
Following \Cref{alg:FRTO_sPODG}, the sPOD ansatz is applied to the snapshot matrix to produce the basis $\{\mathcal{T}(z)\phi_k\}^{\tilde{\ell}}_{k=1}$ during the refinement steps. 
These refinements require computing the shifts $z$ and the associated transformation operators $\mathcal{T}(z)$, which although sparse are costly to assemble. 
To reduce this expense, we compute the shift and its corresponding transformation operator once from the uncontrolled profile and keep them fixed throughout the optimization. 
However, the modes $\phi_k$ need to be updated and are obtained solely by performing an SVD on the shifted state snapshot matrix.
Nevertheless, it is also feasible to determine a common basis by integrating snapshots from both the state and adjoint, which we do not touch on in this work.
The basis refinement is performed at every fifth optimization step, as well as whenever the step-size selection criterion fails.
The implementation of the POD-G method follows in a similar fashion and is shown in \Cref{alg:FRTO_PODG}.
Subsequently, the results for the single tilt problem are shown in \Cref{fig:J_shifting}.
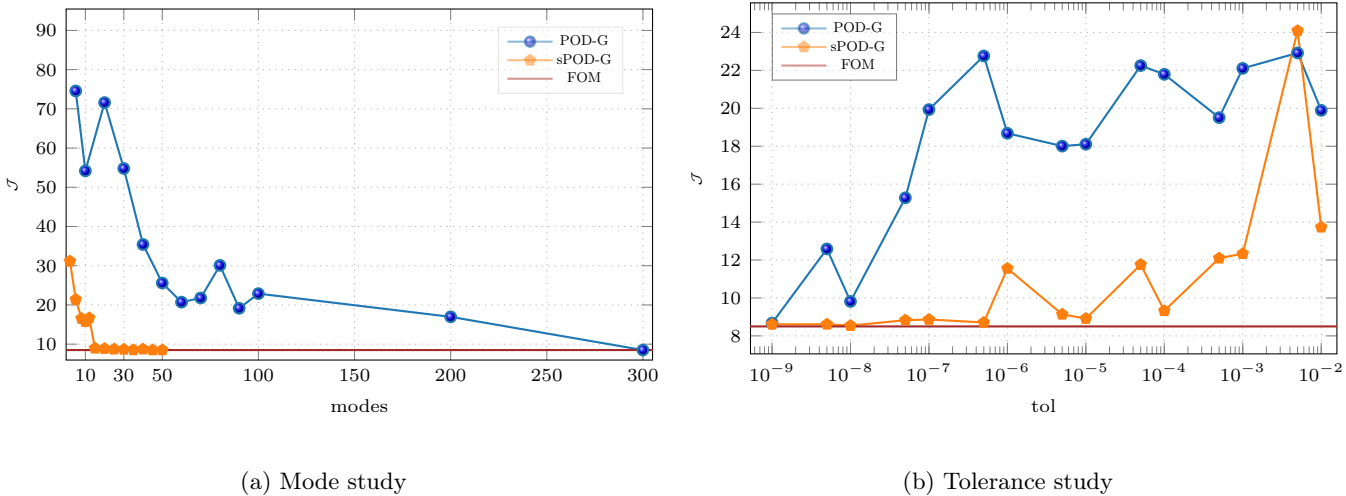
\begin{figure}[htp!]
    \centering
        \begin{subfigure}[t]{0.48\textwidth}
        \centering
        \setlength\figureheight{0.8\linewidth}
        \setlength\figurewidth{0.9\linewidth}
        \begin{tikzpicture}

  \definecolor{brown}{RGB}{165,42,42}
  \definecolor{darkgray176}{RGB}{176,176,176}
  \definecolor{green}{RGB}{0,128,0}
  \definecolor{lightgray204}{RGB}{204,204,204}
  \definecolor{darkorange25512714}{RGB}{255,127,14}
  \definecolor{forestgreen4416044}{RGB}{44,160,44}
  \definecolor{sienna}{RGB}{160,82,45}
  \definecolor{steelblue31119180}{RGB}{31,119,180}
    \definecolor{crimson2143940}{RGB}{214,39,40}

  \begin{axis}[
    height=0.9*\figureheight,
    width=1.2*\figurewidth,
    xmin=5.01187233627271e-08,   xmax=305,
    ymin=5.95019163314602,      ymax=95.452607734195,
    xlabel={modes},
    ylabel={$\scriptstyle{\mathcal{J}}$},
    xmajorgrids,
    ymajorgrids,
    x grid style={darkgray176},
    y grid style={darkgray176},
    xtick={10,30,50,100,150,200,250,300,350,400,450,500},
    xticklabels={
      \(\displaystyle 10\),
      \(\displaystyle 30\),
      \(\displaystyle 50\),
      \(\displaystyle 100\),
      \(\displaystyle 150\),
      \(\displaystyle 200\),
      \(\displaystyle 250\),
      \(\displaystyle 300\),
      \(\displaystyle 350\),
      \(\displaystyle 400\),
      \(\displaystyle 450\),
      \(\displaystyle 500\),
    },
    ytick={0,10,20,30,40,50,60,70,80,90},
    yticklabels={
      \(\displaystyle 0\),
      \(\displaystyle 10\),
      \(\displaystyle 20\),
      \(\displaystyle 30\),
      \(\displaystyle 40\),
      \(\displaystyle 50\),
      \(\displaystyle 60\),
      \(\displaystyle 70\),
      \(\displaystyle 80\),
      \(\displaystyle 90\),
    },
    legend style={
    font=\footnotesize,
      fill opacity=0.5,
      draw opacity=0.5,
      text opacity=1,
      nodes={scale=0.6},
      draw=lightgray204,
      at={(0.95,0.95)}
    }
  ]

    \addplot+[
      thick,
      steelblue31119180,
      mark=ball,
      mark size=2,
      mark options={solid},
      nodes near coords,
      every node near coord/.append style={
        font=\tiny,
        inner sep=0pt,
        outer sep=0pt,
        yshift=2pt
      },
      point meta=explicit symbolic
    ]
    table[
      row sep=\\,
      x=x,
      y=y,
      meta=label
    ] {
      x           y                    label \\
    5   74.56163510765683    {} \\
    10   54.16965833127915    {} \\
    20   71.60100102782518    {} \\
    30   54.792166121051544    {} \\
    40   35.41323555885929    {} \\
    50   25.591316414067467   {} \\
    60   20.711379911689974    {} \\
    70   21.770978299185863    {} \\
    80   30.086303382342983    {} \\
    90   19.149701953856205    {} \\
    100   22.884020440621175    {} \\
    200   16.967495278504842    {} \\
    300   8.499151726363964    {} \\
    400   8.499374875520163    {} \\
    500   8.499151114670585    {} \\
    };
    \addlegendentry{\small POD-G}

    
    \addplot+[
      thick,
      darkorange25512714,
      mark=pentagon*,
      mark size=2,
      mark options={solid},
      nodes near coords,
      every node near coord/.append style={
        font=\tiny,
        inner sep=0pt,
        outer sep=0pt,
        yshift=2pt
      },
      point meta=explicit symbolic
    ]
    table[
      row sep=\\,
      x=x,
      y=y,
      meta=label
    ] {
      x           y                    label \\
    2   31.155682451346276    {} \\
    5   21.39817642641648    {} \\
    8   16.543577402202295   {} \\
    10   15.748693345760314    {} \\
    12   16.68424471651317    {} \\
    15   8.95783312328255    {} \\
    20   8.868661466465214    {} \\
    25   8.716134851865014    {} \\
    30   8.676415272734918    {} \\
    35   8.541572834693795    {} \\
    40   8.731846944971911    {} \\
    45   8.513560611976155    {} \\
    50   8.514122727271559    {} \\
    };
    \addlegendentry{\small sPOD-G}


    \addplot[
    thick,
      mark=none,
      brown,
      samples=150
    ] coordinates {
      (5.01187233627271e-08,8.499)
      (510.0199526231496888,8.499)
    };
    \addlegendentry{\small FOM}

  \end{axis}
\end{tikzpicture}   
        \caption{Mode study}
        \label{fig:J_vs_modes_FRTO_AB_shifting}
    \end{subfigure}
    \hspace{0.01\textwidth}
    \begin{subfigure}[t]{0.48\textwidth}
        \centering
        \setlength\figureheight{0.8\linewidth}
        \setlength\figurewidth{0.9\linewidth}
        \begin{tikzpicture}

  \definecolor{brown}{RGB}{165,42,42}
  \definecolor{darkgray176}{RGB}{176,176,176}
  \definecolor{green}{RGB}{0,128,0}
  \definecolor{lightgray204}{RGB}{204,204,204}

  \definecolor{crimson2143940}{RGB}{214,39,40}
  \definecolor{darkorange25512714}{RGB}{255,127,14}
  \definecolor{forestgreen4416044}{RGB}{44,160,44}
  \definecolor{sienna}{RGB}{160,82,45}
  \definecolor{steelblue31119180}{RGB}{31,119,180}

  \begin{axis}[
    height=0.9*\figureheight,
    width=1.2*\figurewidth,
    xmode=log,
    xmin=5.30957344480193e-10, xmax=0.0158489319246111,
    ymin=7.052039401636526,    ymax=25.56193659558,
    xlabel={tol},
    ylabel={$\scriptstyle{\mathcal{J}}$},
    xmajorgrids,
    ymajorgrids,
    x grid style={darkgray176},
    y grid style={darkgray176},
    xtick={1e-10,1e-09,1e-08,1e-07,1e-06,1e-05,1e-04,1e-03,1e-02,1e-01,1},
    xticklabels={
      \(\displaystyle 10^{-10}\),
      \(\displaystyle 10^{-9}\),
      \(\displaystyle 10^{-8}\),
      \(\displaystyle 10^{-7}\),
      \(\displaystyle 10^{-6}\),
      \(\displaystyle 10^{-5}\),
      \(\displaystyle 10^{-4}\),
      \(\displaystyle 10^{-3}\),
      \(\displaystyle 10^{-2}\),
      \(\displaystyle 10^{-1}\),
      \(\displaystyle 10^{0}\)
    },
    ytick={0,2,4,6,8,10,12,14,16,18,20,22,24},
    yticklabels={
      \(\displaystyle 0\),
      \(\displaystyle 2\),
      \(\displaystyle 4\),
      \(\displaystyle 6\),
      \(\displaystyle 8\),
      \(\displaystyle 10\),
      \(\displaystyle 12\),
      \(\displaystyle 14\),
      \(\displaystyle 16\),
      \(\displaystyle 18\),
      \(\displaystyle 20\),
      \(\displaystyle 22\),
      \(\displaystyle 24\),
    },
    legend style={
    font=\footnotesize,
      fill opacity=0.5,
      draw opacity=0.5,
      text opacity=1,
      nodes={scale=0.6},
      at={(0.25,0.97)},
      anchor=north east
    }
  ]

    \addplot+[
      thick,
      steelblue31119180,
      mark=ball,
      mark size=2,
      mark options={solid},
      nodes near coords,
      every node near coord/.append style={
        font=\tiny,
        inner sep=0pt,
        outer sep=0pt,
        yshift=-7pt
      },
      point meta=explicit symbolic
    ]
    table[
      row sep=\\,
      x=x,
      y=y,
      meta=label
    ] {
      x           y                    label \\
    1e-09   8.675067928630614    {} \\
    5e-09   12.584869593528209    {} \\
    1e-08   9.81651077925169      {} \\
    5e-08   15.279654887812354    {} \\
    1e-07   19.933636121207325      {} \\
    5e-07   22.762099817364394      {} \\
    1e-06   18.682398388891144    {} \\
    5e-06   18.00799126549231    {} \\
    1e-05   18.106607355509045    {} \\
    5e-05   22.252349910532523      {} \\
    0.0001  21.792352716758067    {} \\
    0.0005  19.50891698903385      {} \\
    0.001   22.106339385753834      {} \\
    0.005   22.919910994569523      {} \\
    0.01    19.892583353428634     {} \\
    };
    \addlegendentry{\small POD-G}


    \addplot+[
      thick,
      darkorange25512714,
      mark=pentagon*,
      mark size=2,
      mark options={solid},
      nodes near coords,
      every node near coord/.append style={
        font=\tiny,
        inner sep=0pt,
        outer sep=0pt,
        yshift=2pt
      },
      point meta=explicit symbolic
    ]
    table[
      row sep=\\,
      x=x,
      y=y,
      meta=label
    ] {
      x         y                   label \\
    1e-09   8.610388351752109    {} \\
    5e-09   8.612578034843901    {} \\
    1e-08   8.541989893157576      {} \\
    5e-08  8.828110559703173    {} \\
    1e-07  8.864576493005936      {} \\
    5e-07   8.707307966358787      {} \\
    1e-06   11.555412749689653    {} \\
    5e-06   9.136602011318448    {} \\
    1e-05   8.914069521921805    {} \\
    5e-05   11.759220594801167      {} \\
    0.0001  9.323400314732426    {} \\
    0.0005  12.095990698524345      {} \\
    0.001   12.334718177464234      {} \\
    0.005   24.092487131132117     {} \\
    0.01    13.731521469128136     {} \\
    };
    \addlegendentry{\small sPOD-G}


    \addplot[
    thick,
      mark=none,
      brown,
      samples=150
    ] coordinates {
      (5.01187233627271e-11, 8.499)
      (0.0199526231496888,   8.499)
    };
    \addlegendentry{\small FOM}

  \end{axis}
\end{tikzpicture}   
        \caption{Tolerance study}
        \label{fig:J_vs_tol_FRTO_AB}
    \end{subfigure}
    \caption{Plots for $\mathcal{J}$ behavior for the single tilt problem}
    \label{fig:J_shifting}
\end{figure}
The plot on the left shows the mode study, and the plot on the right shows the tolerance study.
The plot on the left shows that the sPOD-G method requires close to $20$ modes to reach near the FOM cost functional, whereas the POD-G method requires close to $300$ modes.
From the right plot in \Cref{fig:J_shifting}, we observe that as the tolerance decreases, both curves approach the FOM cost functional.
The average number of modes $n_\mathrm{avg}$ required by the methods per optimization step for the tolerance study is shown in \Cref{tab:tol_FRTO}.
\begin{table}[htp]
\centering
\caption{$n_\mathrm{avg}$ values for tolerance study}
\label{tab:tol_FRTO}
\begin{tabular}{l|cc||cc}
\toprule
 & \multicolumn{2}{c||}{Single tilt problem} & \multicolumn{2}{c}{Double tilt problem} \\
\midrule
$\mathrm{tol}$ & POD-G & sPOD-G & POD-G & sPOD-G \\
\midrule
$10^{-2}$         & $103$ & $7$  & $101$ & $6$  \\
$10^{-3}$         & $127$ & $10$ & $125$ & $18$ \\
$10^{-4}$         & $148$ & $12$ & $146$ & $24$ \\
$10^{-5}$         & $166$ & $15$ & $166$ & $30$ \\
$10^{-6}$         & $184$ & $17$ & $184$ & $34$ \\
$10^{-7}$         & $200$ & $20$ & $200$ & $39$ \\
$10^{-8}$         & $214$ & $25$ & $215$ & $41$ \\
$10^{-9}$         & $228$ & $27$ & $230$ & $41$ \\
\bottomrule
\end{tabular}
\end{table}

For the double tilt problem, the results are shown in \Cref{fig:J_shifting_3}.
\begin{figure}[htp!]
    \centering
    \begin{subfigure}[t]{0.48\textwidth}
        \centering
        \setlength\figureheight{0.8\linewidth}
        \setlength\figurewidth{0.9\linewidth}
        \begin{tikzpicture}

  \definecolor{brown}{RGB}{165,42,42}
  \definecolor{darkgray176}{RGB}{176,176,176}
  \definecolor{green}{RGB}{0,128,0}
  \definecolor{lightgray204}{RGB}{204,204,204}
  \definecolor{darkorange25512714}{RGB}{255,127,14}
  \definecolor{forestgreen4416044}{RGB}{44,160,44}
  \definecolor{sienna}{RGB}{160,82,45}
  \definecolor{steelblue31119180}{RGB}{31,119,180}
    \definecolor{crimson2143940}{RGB}{214,39,40}

  \begin{axis}[
    height=0.9*\figureheight,
    width=1.2*\figurewidth,
    xmin=5.01187233627271e-08,   xmax=305,
    ymin=15.95019163314602,      ymax=142.452607734195,
    xlabel={modes},
    ylabel={$\scriptstyle{\mathcal{J}}$},
    xmajorgrids,
    ymajorgrids,
    x grid style={darkgray176},
    y grid style={darkgray176},
    xtick={10,30,50,100,150,200,250,300,350,400,450,500},
    xticklabels={
      \(\displaystyle 10\),
      \(\displaystyle 30\),
      \(\displaystyle 50\),
      \(\displaystyle 100\),
      \(\displaystyle 150\),
      \(\displaystyle 200\),
      \(\displaystyle 250\),
      \(\displaystyle 300\),
      \(\displaystyle 350\),
      \(\displaystyle 400\),
      \(\displaystyle 450\),
      \(\displaystyle 500\),
    },
    ytick={0,10,20,30,40,50,60,70,80,90,100,110,120,130,140},
    yticklabels={
      \(\displaystyle 0\),
      \(\displaystyle 10\),
      \(\displaystyle 20\),
      \(\displaystyle 30\),
      \(\displaystyle 40\),
      \(\displaystyle 50\),
      \(\displaystyle 60\),
      \(\displaystyle 70\),
      \(\displaystyle 80\),
      \(\displaystyle 90\),
      \(\displaystyle 100\),
      \(\displaystyle 110\),
      \(\displaystyle 120\),
      \(\displaystyle 130\),
      \(\displaystyle 140\),
    },
    legend style={
    font=\footnotesize,
      fill opacity=0.5,
      draw opacity=0.5,
      text opacity=1,
      nodes={scale=0.6},
      draw=lightgray204,
      at={(0.95,0.95)}
    }
  ]

    \addplot+[
      thick,
      steelblue31119180,
      mark=ball,
      mark size=2,
      mark options={solid},
      nodes near coords,
      every node near coord/.append style={
        font=\tiny,
        inner sep=0pt,
        outer sep=0pt,
        yshift=2pt
      },
      point meta=explicit symbolic
    ]
    table[
      row sep=\\,
      x=x,
      y=y,
      meta=label
    ] {
      x           y                    label \\
    5   124.37759103831655    {} \\
    10   103.15893975945094    {} \\
    20   91.33817680620055    {} \\
    30   92.00822635600146    {} \\
    40   80.90874451275029    {} \\
    50   68.56340117963312   {} \\
    60   60.7478089379132    {} \\
    70   59.761687542314704    {} \\
    80   60.66572749965653    {} \\
    90   65.85812606075727    {} \\
    100   58.31610926799897    {} \\
    200   34.1155767895364    {} \\
    300   25.42017920492383    {} \\
    400   25.418597517408987    {} \\
    500   25.418553652642643    {} \\
    };
    \addlegendentry{\small POD-G}

    
    \addplot+[
      thick,
      darkorange25512714,
      mark=pentagon*,
      mark size=2,
      mark options={solid},
      nodes near coords,
      every node near coord/.append style={
        font=\tiny,
        inner sep=0pt,
        outer sep=0pt,
        yshift=2pt
      },
      point meta=explicit symbolic
    ]
    table[
      row sep=\\,
      x=x,
      y=y,
      meta=label
    ] {
      x           y                    label \\
    2   97.16335318241163    {} \\
    5   71.31187694286511    {} \\
    8   69.15244233721376   {} \\
    10   70.33874281386134    {} \\
    12   60.119767840722865    {} \\
    15   62.57015346141623    {} \\
    20   74.93831757106466    {} \\
    25   47.709867261377035    {} \\
    30   30.363649291212834    {} \\
    35   28.503138065501904    {} \\
    40   27.487064260799354    {} \\
    45   25.48419253582651    {} \\
    50   25.557652632477183    {} \\
    };
    \addlegendentry{\small sPOD-G}


    \addplot[
    thick,
      mark=none,
      brown,
      samples=150
    ] coordinates {
      (5.01187233627271e-08,25.60)
      (510.0199526231496888,25.60)
    };
    \addlegendentry{\small FOM}

  \end{axis}
\end{tikzpicture}   
        \caption{Mode study}
        \label{fig:J_vs_modes_FRTO_AB_CS}
    \end{subfigure}
    \hspace{0.01\textwidth}
    \begin{subfigure}[t]{0.48\textwidth}
        \centering
        \setlength\figureheight{0.8\linewidth}
        \setlength\figurewidth{0.9\linewidth}
        \begin{tikzpicture}

  \definecolor{brown}{RGB}{165,42,42}
  \definecolor{darkgray176}{RGB}{176,176,176}
  \definecolor{green}{RGB}{0,128,0}
  \definecolor{lightgray204}{RGB}{204,204,204}

  \definecolor{crimson2143940}{RGB}{214,39,40}
  \definecolor{darkorange25512714}{RGB}{255,127,14}
  \definecolor{forestgreen4416044}{RGB}{44,160,44}
  \definecolor{sienna}{RGB}{160,82,45}
  \definecolor{steelblue31119180}{RGB}{31,119,180}

  \begin{axis}[
    height=0.9*\figureheight,
    width=1.2*\figurewidth,
    xmode=log,
    xmin=5.30957344480193e-10, xmax=0.0158489319246111,
    ymin=23.052039401636526,    ymax=120.56193659558,
    xlabel={tol},
    ylabel={$\scriptstyle{\mathcal{J}}$},
    xmajorgrids,
    ymajorgrids,
    x grid style={darkgray176},
    y grid style={darkgray176},
    xtick={1e-10,1e-09,1e-08,1e-07,1e-06,1e-05,1e-04,1e-03,1e-02,1e-01,1},
    xticklabels={
      \(\displaystyle 10^{-10}\),
      \(\displaystyle 10^{-9}\),
      \(\displaystyle 10^{-8}\),
      \(\displaystyle 10^{-7}\),
      \(\displaystyle 10^{-6}\),
      \(\displaystyle 10^{-5}\),
      \(\displaystyle 10^{-4}\),
      \(\displaystyle 10^{-3}\),
      \(\displaystyle 10^{-2}\),
      \(\displaystyle 10^{-1}\),
      \(\displaystyle 10^{0}\)
    },
    ytick={0,10,20,30,40,50,60,70,80,90,100,110,120},
    yticklabels={
      \(\displaystyle 0\),
      \(\displaystyle 10\),
      \(\displaystyle 20\),
      \(\displaystyle 30\),
      \(\displaystyle 40\),
      \(\displaystyle 50\),
      \(\displaystyle 60\),
      \(\displaystyle 70\),
      \(\displaystyle 80\),
      \(\displaystyle 90\),
      \(\displaystyle 100\),
      \(\displaystyle 110\),
      \(\displaystyle 120\),
    },
    legend style={
    font=\footnotesize,
      fill opacity=0.5,
      draw opacity=0.5,
      text opacity=1,
      nodes={scale=0.6},
      at={(0.25,0.97)},
      anchor=north east
    }
  ]

    \addplot+[
      thick,
      steelblue31119180,
      mark=ball,
      mark size=2,
      mark options={solid},
      nodes near coords,
      every node near coord/.append style={
        font=\tiny,
        inner sep=0pt,
        outer sep=0pt,
        yshift=-7pt
      },
      point meta=explicit symbolic
    ]
    table[
      row sep=\\,
      x=x,
      y=y,
      meta=label
    ] {
      x           y                    label \\
    1e-09   29.45850765116785    {} \\
    5e-09   29.19879919153803    {} \\
    1e-08   31.229893405284447      {} \\
    5e-08   33.20051249179649    {} \\
    1e-07   34.75375320053404      {} \\
    5e-07   37.730348895237015      {} \\
    1e-06   38.66508117992011    {} \\
    5e-06   42.85659940806501    {} \\
    1e-05   39.76328815105491    {} \\
    5e-05   47.450775451017      {} \\
    0.0001  53.017753813274645    {} \\
    0.0005  53.617262711494256      {} \\
    0.001   57.532702428353595      {} \\
    0.005   54.432248728994495      {} \\
    0.01    60.17919967486896     {} \\
    };
    \addlegendentry{\small POD-G}


    \addplot+[
      thick,
      darkorange25512714,
      mark=pentagon*,
      mark size=2,
      mark options={solid},
      nodes near coords,
      every node near coord/.append style={
        font=\tiny,
        inner sep=0pt,
        outer sep=0pt,
        yshift=2pt
      },
      point meta=explicit symbolic
    ]
    table[
      row sep=\\,
      x=x,
      y=y,
      meta=label
    ] {
      x         y                   label \\
    1e-09   25.48172346229962    {} \\
    5e-09   25.4817218344864    {} \\
    1e-08   25.482645417181665      {} \\
    5e-08  25.531680240060382    {} \\
    1e-07  25.48370341847824      {} \\
    5e-07   25.52591317597811      {} \\
    1e-06   25.95864136096509    {} \\
    5e-06   29.122632823053177    {} \\
    1e-05   27.76479501058545    {} \\
    5e-05   28.354755034832355      {} \\
    0.0001  30.377799929667045    {} \\
    0.0005  37.22883065774487      {} \\
    0.001   35.115520631716      {} \\
    0.005   69.65926773390186     {} \\
    0.01    114.93455722313527     {} \\
    };
    \addlegendentry{\small sPOD-G}


    \addplot[
    thick,
      mark=none,
      brown,
      samples=150
    ] coordinates {
      (5.01187233627271e-11, 25.60)
      (0.0199526231496888,   25.60)
    };
    \addlegendentry{\small FOM}

  \end{axis}
\end{tikzpicture}   
        \caption{Tolerance study}
        \label{fig:J_vs_tol_FRTO_AB_CS}
    \end{subfigure}
    \caption{Plots for $\mathcal{J}$ behavior for the double tilt problem}
    \label{fig:J_shifting_3}
\end{figure}
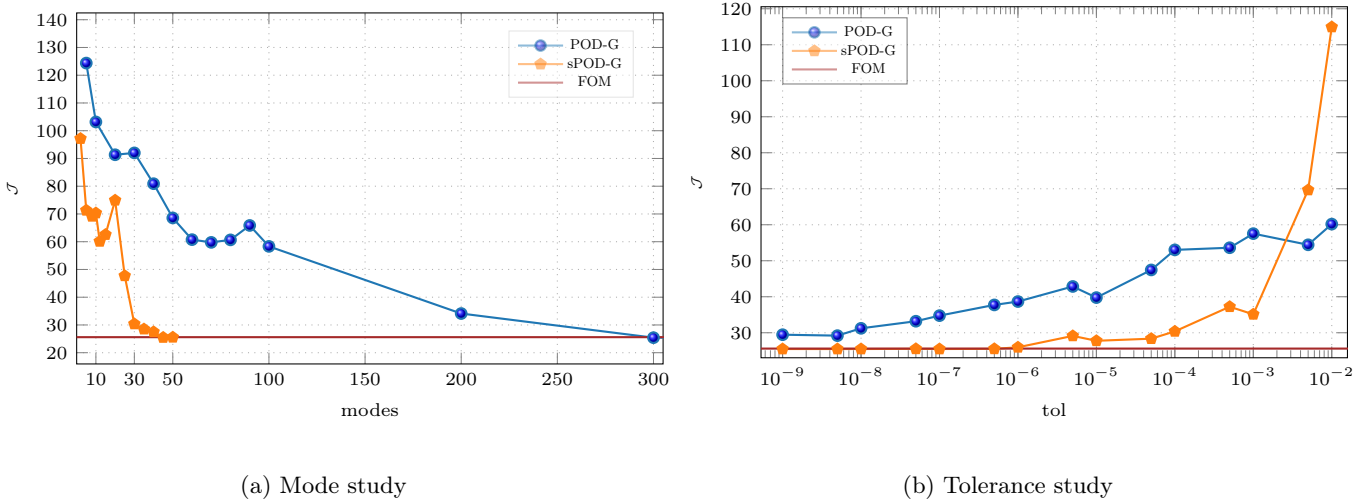
We observe that for the mode study, the sPOD-G method requires close to $45$ modes to reach the FOM cost functional, whereas the POD-G method requires close to $300$ modes to reach the FOM cost functional.
The right plot shows the tolerance study along similar lines as in the previous example.
The average mode numbers throughout the iterations are listed in \Cref{tab:tol_FRTO}.
In the following we summarize the crucial observations from both test cases.
\begin{itemize}
    \item We observe that the sPOD-G uses roughly $6-15$ times fewer modes than the POD-G. 
    While further reduction could be achieved by updating the shifts periodically, intermediate snapshots develop smeared (diffused) features that make the resulting shift estimates ambiguous.
    This problem thus becomes non-trivial and for this reason, we do not pursue shift-updating in the present study.
    \item The minimum number of modes required for sPOD-G and POD-G to reach the FOM cost in the mode study roughly matches the $n_\mathrm{avg}$ obtained in the tolerance study at the strictest tolerance.
    Moreover, specifically for the sPOD-G method, these values $n_\mathrm{avg}$ also indicate that there is indeed an upper bound for the number of modes, which is $m + 1 = 42$, and in both examples if we let the algorithm choose the appropriate number of modes based on a prescribed tolerance, it picks $n_\mathrm{avg} \leq m + 1$.
    \item The difference between $n_\mathrm{avg}$ (listed in \Cref{tab:tol_FRTO}) and the modes in the mode study is expected.
    The mode numbers given in the mode study are exact and prescribed by the user upfront, while $n_\mathrm{avg}$ are average mode numbers per iteration that are automatically selected by the tolerance criterion and are rough estimates.
    Thus, they should not be confused with each other.
\end{itemize}

\subsection{Timing analysis}
So far, we have examined the performance differences between the POD-G and sPOD-G methods in terms of reduced-order dimensions and tolerance values. 
Let us also focus on a comparison between the two techniques based on their computational times.
\Cref{fig:J_vs_runtime_full} illustrates a study of the time it takes for computations when using reduced-order models for both example problems. 
\begin{figure}[htp!]
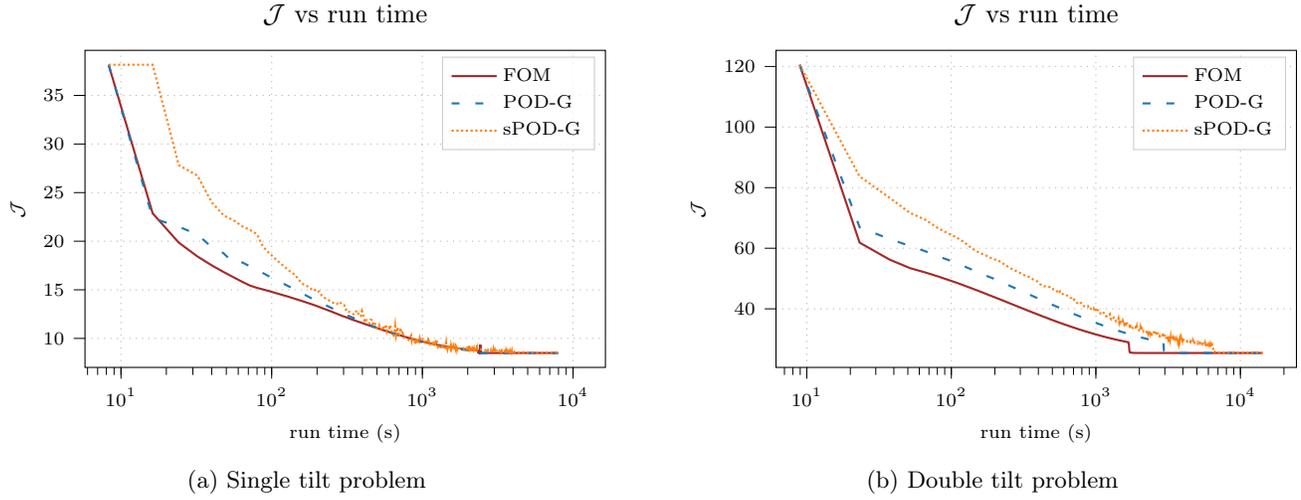

    \centering
    \begin{subfigure}{0.49\textwidth}
        \centering
        \input{J_vs_runtime_Shifting}
        \caption{Single tilt problem}
        \label{fig:J_vs_runtime_S}
    \end{subfigure}
    \hspace{0.001\textwidth}
    \begin{subfigure}{0.49\textwidth}
        \centering
        \input{J_vs_runtime_Shifting_3}
        \caption{Double tilt problem}
        \label{fig:J_vs_runtime_S_3}
    \end{subfigure}
    \caption{Plots for $\mathcal{J}$ vs. run time for both examples}
    \label{fig:J_vs_runtime_full}
\end{figure}
We note right away that the sPOD-G method provides no speedup when compared with either POD-G or the FOM.
Although it seems counterintuitive, the FOM is in general faster than both the reduced order models. 
For a more detailed comparison of the computational time for the main steps, we refer to \Cref{tab:timing_split_st} and \Cref{tab:timing_split_dt}.
\begin{table}[htp]
  \centering
  \caption{Computational time ($\mathrm{s}$) for crucial steps (Single tilt problem)}
  \label{tab:timing_split_st}
  \begin{tabular}{l|ccc}
    \toprule
    Computational Steps & POD-G ($\mathrm{modes} = 300$) & sPOD-G ($\mathrm{modes} = 35$) & FOM \\
    \midrule
    $n_\mathrm{iter}$  & $2601$  & $1501$  & $2015$ \\
    \midrule
    Basis construction            & $1199.06$  & $1112.77$ & $0.00$ \\
    ROM/FOM state solve    & $123.24$  & $106.85$  & $466.15$\\
    Compute $\mathcal{J}$            & $403.33$  & $334.49$ & $132.20$  \\
    ROM/FOM adjoint solve    & $296.56$  & $1060.25$ & $597.09$  \\   
    Compute gradient            & $8.33$  & $14.44$ & $70.66$\\
    Update control    & $897.46$  & $1309.96$  & $1235.69$  \\
    \midrule 
    \textbf{Total}            & $3436.33$  & $4217.15$ & $3098.29$ \\
    \bottomrule
  \end{tabular}
\end{table}
\begin{table}[htp]
  \centering
  \caption{Computational time ($\mathrm{s}$) for crucial steps (Double tilt problem)}
  \label{tab:timing_split_dt}
  \begin{tabular}{l|ccc}
    \toprule
    Computational Steps & POD-G ($\mathrm{modes} = 300$) & sPOD-G ($\mathrm{modes} = 45$) & FOM \\
    \midrule
    $n_\mathrm{iter}$  & $2601$  & $2601$  & $1601$ \\
    \midrule
    Basis construction            & $1332.88$  & $2088.27$ & $0.00$ \\
    ROM/FOM state solve    & $120.40$  & $308.85$  & $267.90$\\
    Compute $\mathcal{J}$            & $433.17$  & $784.34$ & $85.65$  \\
    ROM/FOM adjoint solve    & $308.56$  & $2485.28$ & $344.15$  \\   
    Compute gradient            & $9.67$  & $25.75$ & $44.63$\\
    Update control    & $1195.09$  & $2393.90$  & $1146.35$  \\
    \midrule 
    \textbf{Total}            & $4007.47$  & $8787.66$ & $2240.54$ \\
    \bottomrule
  \end{tabular}
\end{table}

The tables report timings for selected mode numbers and suffice to illustrate the discrepancies shown in \Cref{fig:J_vs_runtime_full}. 
The FOM is the fastest for two main reasons, it exploits sparse-matrix algebra to solve the state and adjoint equations (which the ROMs cannot use) and it does not incur the overhead of constructing a reduced basis.
Comparing the two ROMs, the most significant timing difference appears during solving the ROM adjoint equation because, unlike the POD-G method, the sPOD-G adjoint equation scales with the FOM dimension while computing the target term. 
A crucial point which should be noted additionally is that the basis-construction step for sPOD-G is, in principle, more expensive than for POD-G because the sPOD-G must build shift-dependent terms for each sampled shift value and assemble the corresponding operators at every basis-refinement step. 
Despite this, the measured timings do not show a large gap. 
The reason is that the POD-G must compute an SVD of the snapshot matrix with 300 modes, which is an expensive operation.
This cost largely offsets the extra expense of assembling shift-dependent terms for sPOD-G, so the overall basis-construction times are often comparable.

\section{Conclusion and outlook}\label{sec:conclusion}
In this paper, we investigated an open-loop optimal control problem using ROMs, with a particular focus on the sPOD-Galerkin approach.  
Theoretical analysis and numerical experiments indicate that, despite its complexity, the sPOD-G method provides a valuable initial step toward applying this framework to more challenging problems.  
Nonetheless, several difficulties persist, as highlighted by the computational timing results.  
A promising direction for future work is to accelerate the solution of the sPOD-G adjoint equation, which currently still scales with the full-order model dimension.  
Addressing this is non-trivial, as it likely requires the incorporation of hyperreduction techniques, such as EIM/DEIM \cite{barrault_empirical_2004, chaturantabut_nonlinear_2010}, to obtain an efficient yet accurate approximation of the full-order operations involved.  
In principle, one could also aim to rigorously formalize the adaptive basis refinement procedure in the spirit of OSPOD and TRPOD.  
Furthermore, applying the sPOD-G method to optimal control problems for more complex PDE systems, for instance a wildland fire model \cite{burela_parametric_2023}, could yield additional insights into the practical relevance and applicability of this approach.

\section*{Data Availability Statement}

To enhance the reproducibility and transparency of the research in this paper, the source code for the experiments and analyzes has been made publicly available via the following Zenodo repository:\\
\begin{center}
\urlstyle{tt}
\url{https://doi.org/10.5281/zenodo.19185335}
\end{center}

\vspace{1em}
We encourage researchers to utilize and build upon the code for their own research purposes.

\section*{Acknowledgement}
T.B.~and S.B. gratefully acknowledge the support of the Deutsche Forschungsgemeinschaft (DFG) as part of GRK2433 DAEDALUS (DFG Project number: 384950143) as well as the financial support from the SFB TRR154 (DFG project number: 239904186) under the sub-project B03.
P.S. thanks the Deutsche Forschungsgemeinschaft for their support within the SFB 1294 ``Data Assimilation – The Seamless Integration of Data and Models'' (Project 318763901).
We also thank Philipp Krah for his insightful comments and feedback and for providing access to the sPOD codebase.

\addcontentsline{toc}{section}{Conflict Of Interest (COI)}
\section*{Conflict Of Interest (COI)}
All authors declare that they have no conflicts of interest.

\bibliographystyle{plain}
\bibliography{abbr,refs}

\end{document}